\definecolor{DarkRed}{RGB}{139,0,0}
\newtheorem{Theorem}{Theorem}[section]
\newtheorem{Definition}[Theorem]{Definition}
\newtheorem{Proposition}[Theorem]{Proposition}
\newtheorem{Corollary}[Theorem]{Corollary}
\newtheorem{Remark}[Theorem]{Remark}
\begin{document}
\thispagestyle{empty}
\begin{titlepage}


\begin{figure}
\vspace*{-2cm}
\begin{center}
\includegraphics[width=3.5cm,height=4cm]{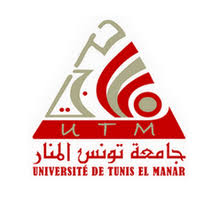}
\end{center}
\includegraphics[width=2.5cm,height=2.5cm]{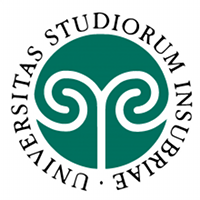}
\hfill
\includegraphics[width=3.5cm,height=1.85cm]{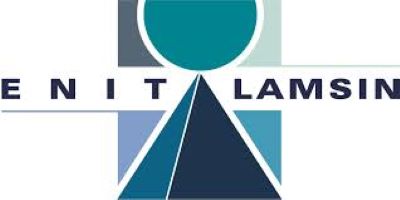}
\end{figure}
\begin{center}
\textsc{University of Tunis El Manar} \\
\textsc{National engineering school of Tunis} \\
\textsc{Faculty of Sciences of Tunis}\\
\vspace*{\stretch{1}}
\normalsize
{\Large \hrule \ \\ \bf
\textsf{ \textcolor{blue}{Spectral analysis and fast methods for large matrices arising from PDE approximation}}\\ [3mm] \hrule} \ \\ \ \\
\emph{\textbf{ }}\par
Thesis submitted to obtain the grade of \par
\emph{\textbf{Doctor of Philosophy}}\par
\small Laboratory of Mathematical and Numerical Modelling in Engineering Sciences \par
\emph{\textbf{Applied Mathematics}}\par

\vspace*{\stretch{0.5}}
 \par
 {\bf \textit{Ryma Imene RAHLA}}
\\ \ \\
Under the supervision of\\
{\bf \textit{Skander BELHAJ and Stefano SERRA-CAPIZZANO}}
\\ \ \\
\end{center}
\end{titlepage}
\newpage
\thispagestyle{empty}
\vspace*{\fill}
\begin{center}

{\huge "\textit{\textsc{Mathematics is the language in which GOD has written the universe}}"}
\end{center}
\begin{flushright}
  {\large Galileo Galilei}
\end{flushright}
\vspace*{\fill}

\newpage
\thispagestyle{empty}
\vspace*{\fill}
I dedicate my Ph.D. thesis to my wonderful supervisor Stefano SERRA-CAPIZZANO
\vspace*{\fill}

\newpage
\chapter*{Acknowledgements}

First and foremost, I would like to thank Almighty \textbf{GOD} for providing me the courage and patience to accomplish this work.\\

I would like to thank my thesis supervisor \textbf{Pr. Skander BELHAJ}  for guiding me and for his patience, availability, and invaluable assistance during my thesis years.\\

I am grateful to my co-supervisor \textbf{Pr. Stefano SERRA-CAPIZZANO} for sharing his knowledge and experience with me. All my gratitude for your trust and prompt assistance. Finally, during this project, I was acutely aware of his human qualities of listening and comprehension.\\

I would like to express my heartfelt appreciation to \textbf{Pr. Amel BEN ABDA} for the privilege of chairing the jury. \\

I would like to express my gratitude to \textbf{Pr. Carla MANNI } and \textbf{Pr. Paris VASSALOS} for agreeing to be the reviwers of my thesis work and for their insightful comments that helped me improve the quality of this manuscript. I would like to thank them for granting me this honour by serving on this jury.\\

It is also with pleasure that I thank \textbf{Pr. Maher MOAKHER} for the honour of reviewing my work as a defense examiner.\\

My heartfelt gratitude goes to my parents, who have consistently supported me during these hard years of study. Thank you so much for your words of encouragement, love, and spiritual support. GOD protect you and provide you luck, health, and a long life. A big thank you also to my brother, sisters, grandmother, uncle and aunts.\\ 

My thoughts also go to my friends. I sincerely appreciate them for the memories we enjoyed together, even if our reunions and meetings are becoming increasingly distant.\\

Many thanks to all my collaborators, without whom this work would have not been possible.\\

Finally, I would like to thank  everyone who helped me to develop this study, whether directly or indirectly.

\begin{center}
 \includegraphics[width=5cm]{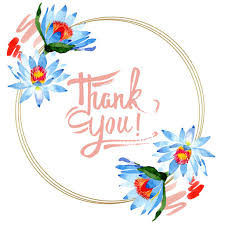}
\end{center}


\listoftables
\listoffigures
\tableofcontents
\chapter*{Introduction}
\addcontentsline{toc}{chapter}{Introduction}

Mathematical modelling occurs in several areas: chemistry, economy, physics, engineering and many other fields. It consists in presenting mathematically a real phenomenon such that the theoretical or numerical analysis of this mathematical presentation gives answers, insights, and direction relevant to the original application.

When modelling complex systems, such as elastic deformation, wave propagation, heat transport, etc., Partial Differential Equations (PDEs) are frequently encountered, and in general, the analytical solution of these PDEs seldom has a closed form expression. Thus, it is critical to use appropriate numerical approaches to estimate the solution $\mathbf{x}$ of the PDE.

  The principle behind numerical approximations is to convert the PDE, from its continuous form, to a discrete version, by introducing a mesh depending on a parameter $n$, and gradually refined when $n$ increases. Thus, the initial infinite-dimensional PDE problem is reduced to a finite-dimensional problem whose solution $\mathbf{x}_n$ converges to $\mathbf{x}$, when $n$ tends to $\infty$.
Thus, by considering a linear approximation of the linear PDE of the form
$$\mathcal{A}\mathbf{x}=\mathbf{b},$$ (where $\mathcal{A}$ indicates the linear differential operator), and by refining progressively the mesh, we obtain a sequence of linear systems of the form $$\{A_n\mathbf{x}_n=\mathbf{b}_n\}_n.$$
\newpage
These linear systems are in general of large dimensions, as the dimension of the problem depends on the number of discretisation points, and therefore, increasing the size of the system is essential to attain more precision. Thus, the study of appropriate techniques frequently focuses on analysing their behaviour as the system size grows. As a result, it is essential to analyse not only the single system $A_n\mathbf{x}_n=\mathbf{b}_n$ but also the entire sequence of linear systems $\{A_n\mathbf{x}_n=\mathbf{b}_n\}_n$ that depends on $n$.


The matrices arising from the approximation of PDEs with constant coefficient when using local methods such that Finite Elements, Finite Differences, etc., are frequently of type Toeplitz in the monodimensional case and multilevel block Toeplitz in multidimensional case. Nevertheless, the approximation of PDEs with a non-constant coefficient, gives rise to matrix-sequences belonging to the Generalized Locally Toeplitz (GLT) class.

For the resolution of a large ill-conditioned system, direct methods are usually unstable \cite{bunch1985stability} and require high computational time. Moreover, these methods do not take fully into consideration the structure or sparsity of the matrix in order to {minimize the number of algebraic operations or optimize} the storage space.

On the other hand, iterative methods such as multigrid and preconditioned Krylov methods are much more favourable: they are less sensitive to numerical instability caused by the ill-conditioning and more adapted to problems of specific structure since they use the spectral information of the given matrix.

The spectrum of the matrix has a major effect on the convergence of the related system. Regarding the structured matrix-sequences, the asymptotic distribution of the spectrum of the discretized problem is described by a function called "symbol", that is, for all continuous functions with bounded support $F\in C_c(\mathbb C)$, the spectrum of $A_n$ has a spectral distribution as the symbol $\mathbf{f}:D\subset\mathbb R^t\to\mathbb{C}^{r\times r}$ in the sense of the eigenvalues:

    \begin{equation}\label{rela1}
       \lim_{n\to\infty}\frac1{d_n}\sum_{i=1}^{d_n}F(\lambda_i(A_n))=\frac1{\mu_t(D)}\int_D\frac{\sum_{i=1}^{r}F(\lambda_i(\mathbf{f}(\mathbf x)))}{r}{\rm d}\mathbf x,
   \end{equation}
 where $D$ is a measurable subset with $0<\mu_t(D)<\infty$, and $\lambda_i(A_n),\lambda_j(\mathbf{f}(\mathbf x))$, $i=1,\ldots,d_n$, $j=1,\ldots,s$, represent the eigenvalues of $A_n$ and the eigenvalues of $\mathbf{f}$, respectively. The interpretation of the relation (\ref{rela1}) is as follows: as $d_n\rightarrow\infty$, the spectrum of $A_n$ is approximated by evaluating of the eigenvalues of the symbol $\lambda_i(\mathbf{f}(\mathbf x))$ over an equispaced grid on $D$.

 The analysis of the symbol gives a precise information regarding the asymptotic behaviour of the spectrum of $A_n$. For instance when the matrix $A_n$ is Hermitian, the symbol often provides an effective estimation of the spectral condition number $A_n$ for sufficiently large $n$.

 The objective of this thesis is to illustrate some concrete examples in order to show the role of the symbol in analysing the spectrum of the matrix and in performing an efficient method to solve the problem arising from PDE approximation. We deal with a class of elliptic partial differential equations with Dirichlet boundary conditions, where the operator is  $\mathrm{div} \left(-a(\mathbf{x}) \nabla \cdot\right)$, see (\ref{eq:modello}), (\ref{eq:modello1}). In Chapter 3, we will investigate the features of the symbol, extracted from the matrices arising in the classical $\mathbb{P}_k$ Finite Elements Approximation, while in Chapter 4, we will develop a multigrid method for the matrices resulting in the case of the $\mathbb{Q}_k$ Finite Elements Approximation.

 In the first part \cite{P_k}, we discuss the spectral analysis of matrix sequences resulting from the $\mathbb{P}_k$ Lagrangian Finite Element approximation of the elliptic problem (\ref{eq:modello}) where the operator is $\mathrm{div} \left(-a(\mathbf{x}) \nabla \cdot\right)$,  and $a$ being continuous and positive on $\overline\Omega$. \\
 Our theoretical analysis focuses on the stiffness
matrix-sequences $\{A_n\}_n$ related to $\mathbb{P}_k$ Finite
Element approximations on uniform structured meshes
\cite{ciarlet,brezzi,Q,p-hp-book}, such as Friedrichs-Keller
triangulations, in which context the powerful spectral tools derived from the Toeplitz theory
\cite{BoSi,glt-1,glt-book-I,glt-book-II,glt-2} greatly facilitate
the required spectral analysis.
\par
We provide a detailed study in the case where $a(\mathbf{x}) \equiv 1$. Then we sketch the general setting by considering a
Riemann integrable diffusion coefficient $a$ and/or a domain $\Omega$ not necessarily of Cartesian structure. We recall that
the same type of analysis of the linear Finite Elements in two dimensions is already considered in  \cite{BeSe,morozov} for the
same equation considered in our study, while coupled partial differential equations (PDEs) with stable pairs of Finite Element
Approximations in two dimensions are considered in \cite{schur-maya}. It is worth noticing the systematic work in
\cite{Q_k}, where the case of tensor rectangular Finite Element Approximations $\mathbb{Q}_{\mathbf{k}}$ of any degree $\mathbf{k}=(k_1,\ldots,k_d)$ and of any dimensionality $d\ge 1$ is studied.
\par
Following the pattern indicated in   \cite{Q_k}, we start a
systematic approach for the  Finite Element Approximations
$\mathbb{P}_k$ for $k\ge2$ and for $d=2$. The analysis for $d=1$
is contained in \cite{Q_k}, trivially because $\mathbb{Q}_k\equiv
\mathbb{P}_k$ for every $k\ge 1$, while for $d=2$, and even more
for $d\ge 3$, the situation is greatly complicated by the fact
that we do not encounter a tensor structure. Nevertheless, the
spectral picture is quite similar and the obtained information in
terms of spectral symbol is sufficient for deducing a quite
accurate analysis concerning the distribution and the extremal
behaviour of the eigenvalues of the resulting matrix-sequences.
\par
More in details, regarding the resulting stiffness matrices, we will consider the following items,
from the perspective of (block) multilevel Toeplitz operators \cite{BoSi,Tilli} and
(block) Generalized Locally Toeplitz (GLT) sequences \cite{glt-1,glt-2}:
\begin{itemize}
\item spectral distribution in the Weyl sense,
\item spectral clustering,
\end{itemize}
with a concise analysis of the extremal eigenvalues, conditioning, spectral localization, and
where the ultimate objective is:
\begin{itemize}
\item the analysis and the design of fast iterative solvers for the associated linear systems.
\end{itemize}
\par
We recall that the spectral distribution and the clustering
results are essential components in the design and in the
convergence analysis of specialised multigrid methods  and
preconditioned Krylov solvers \cite{saad}, such as preconditioned
conjugate gradient (PCG); see \cite[Subsection 3.7]{glt-2} and
\cite{ADS,BK,cmame2,cmame1,our-sinum,Fiorentino-Serra,opiccolo} for more information. In fact, the knowledge of the spectral distribution is the key for
explaining the superlinear convergence history of PCG, thus
improving the classical bounds; see \cite{BK} and references
therein. Most of our study is devoted to the
identification of the spectral symbol using the GLT technology, and
hence to the first item.

Numerous studies have investigated the spectral analysis of the matrices arising from PDE approximations, but with different approximation techniques. We mention, for instance, the case of the stiffness/colloca\-tion matrices coming from the  $\mathbf{k}$\,-degree B-spline Isogeometric Analysis (IgA) approximation of maximal smoothness \cite{IgA-book} of \eqref{eq:modello}; see \cite{our-mathcomp,CCFSH}. The same kind of analysis in the case of $\mathbb{Q}_{\mathbf{k}}$ Finite Elements approximating again  \eqref{eq:modello} is discussed in \cite{Q_k}. A review comparing the latter two approaches with a language tailored for engineers can be found in \cite{tom-paper}.
\par
In the IgA case, the (spectral) symbol  $f_{\rm IgA_{\mathbf{k}}}$
describing the spectral distribution is a scalar-valued
$d$-variate function defined over $[-\pi,\pi]^d$, and hence the
eigenvalues of the IgA discretization matrices are approximated by
a uniform sampling of  $f_{\rm IgA_{\mathbf{k}}}$ over
$[-\pi,\pi]^d$. In this context, the surprising behaviour is that,
when  all the spline degrees $k$ increase, $f_{\rm
IgA_{\mathbf{k}}}(\boldsymbol{\theta})$ collapses exponentially to
zero at all points
$\boldsymbol{\theta}=(\theta_1,\ldots,\theta_d)$ with some
component $\theta_j=\pi$. According to the interpretation based on
the theory of Toeplitz matrices and matrix algebras, this
phenomenon implies that the IgA matrices are ill-conditioned, not
only in the low frequencies (as expected), but also in the high
frequencies, as in the approximation of integral operators
\cite{hanke}. Through the explicit use of this spectral information, it was possible to develop ad hoc iterative solvers with an optimal
convergence rate, substantially independent of
 $\mathbf{k}$ and $d$; see
\cite{cmame2,cmame1,our-sinum}.
\par
In the  $\mathbb{Q}_{\mathbf{k}}$ Lagrangian setting, we are still able to identify the spectral distribution, as for the IgA case.
The related symbol $\mathbf{f}_{\mathbb{Q}_{\mathbf{k}}}$ is $d$-variate and defined on $[-\pi,\pi]^d$, but the surprise is that
$\mathbf{f}_{\mathbb{Q}_{\mathbf{k}}}$ is a $N(\mathbf{k})\times N(\mathbf{k})$ Hermitian matrix-valued function, with
$\mathbf{k}=(k_1,\ldots,k_d)$ vector of the partial degrees in the different directions, $N(\mathbf{k})=\prod_{j=1}^d k_j$ (a similar situation is encountered when dealing with discontinuous Galerkin methods \cite{krause,molteni}). No specific pathologies regarding $f_{\mathbb{Q}_{\mathbf{k}}}$ are observed for large ${\mathbf{k}}$ at the points
$\boldsymbol{\theta}$ such that $\theta_j=\pi$ for some $j$, implying that the source of ill-conditioning, with respect to the
fineness parameters, is only in the low frequencies. However, exactly as in our $\mathbb{P}_k$ setting, where the scalar $k$ represents the global polynomial degree, we observe a serious dimensionality problem, since for moderate $k$
and $d$, the quantity $N(k,d)=k^d$ is very large.

More specifically, the problem is that the spectrum of the $\mathbb{Q}_{\mathbf{k}}$ ($\mathbf{k}=(k_1,\ldots,k_d)$, $d\geq1$ ) Lagrangian Finite Element stiffness matrices is split into $N({\mathbf{k}})=\prod_{j=1}^d k_j$ subsets, or branches in the engineering terminology \cite{crbh06,her14,hrs08,r06}, of approximately the same cardinality and the $i$-th branch is approximately a uniform sampling of the scalar-valued function $\lambda_i(\mathbf{f}_{\mathbb{Q}_{\mathbf{k}}})$, $i=1,\ldots, N(\mathbf{k})$. The exponential scattering (in $\mathbf{k}$ and $d$) of the eigenvalue functions $\lambda_i(\mathbf{f}_{\mathbb{Q}_{\mathbf{k}}})$ provides an explanation of the difficulties encountered by the solvers in the literature, already for moderate $\mathbf{k}$ and $d$. Indeed, it is relatively easy to design a mesh-independent solver, but the dependency on $\mathbf{k}$ and $d$ is generally bad.
\par
 At this point, it is worthwhile stressing that  a cardinality of the branches equal to  $N({\mathbf{k}})$ is expected in the  tensor setting $\mathbb{Q}_{\mathbf{k}}$, while it is somehow a surprise with the current choice of $\mathbb{P}_k$  Finite Elements,
 where $k$ is a scalar denoting the global degree  and $N(k,d)=k^d$. Indeed, we have verified this formula only for $d=2$ and $k=2,3,4$, and hence, a deeper analysis of this phenomenon will be the subject of future investigations.

In the second part of the thesis \cite{multigridQ_k}, a multigrid method is developed for the resolution of the problem coming from $\mathbb{Q}_k$ Finite Element Approximation of the same elliptic problem considered previously, see (\ref{eq:modello1}). By defining the prolongation operator using the inclusion property between the coarser and the finer grid, we provide a study of the relevant analytical features of  all the involved spectral symbols, both of the stiffness matrices $\mathbf{f}_{_{\mathbb{Q}_k}}$ and of the projection operators $\mathbf{p}_{_{\mathbb{Q}_k}}$, $k=1,2,3$. While the two-grid and V-cycle show optimal or quasi-optimal convergence rate,  with respect to all the relevant parameters (size, dimensionality, polynomial degree $k$, and diffusion coefficient), the theoretical prescriptions are only partly satisfied. In fact, with reference to the brief study in Section \ref{optimal2grid}, our choices are in agreement with the mathematical conditions set in {i}tems {\bf (A)} and {\bf (B)}, while {Condition} {\bf (C)} is violated. {Here,  by quasi-optimal convergence rate, we mean that the convergence speed does not depend on the size (optimality with respect to this parameter) and it mildly depends on the other relevant parameters such as dimensionality, polynomial degree $k$, and diffusion coefficient.}
By looking at the mathematical derivations in \cite{multi-block}, we observe that the latter condition indeed is a technical one.
In reality, we believe that Condition {\bf (C)} is not essential and the commutation request can be substituted by a less restrictive one, possibly following the considerations in Remark \ref{rmk:singular_commutator}.

Now, we provide a concise summary of the Thesis content.
\begin{itemize}
\item In \textbf{Chapter 1:} we provide the notations, the necessary definitions, and some known results that lay the groundwork for the succeeding chapters. The final section of this chapter reviews the principles of iterative methods, with specific emphasis on preconditioning and multigrid methods.
\item In \textbf{Chapter 2:} we introduce the fundamental tools for  spectral analysis of matrix-sequences, by discussing distribution in terms of eigenvalues and singular values, clustering, essential range, spectral attraction, and by offering helpful characterizations of zero-distributed sequences. Then, we will explore Toeplitz and Circulant matrices and their spectral properties in both the scalar and block settings. Finally, we will investigate Generalized Locally Toeplitz sequences.
\item In \textbf{Chapter 3:} we consider a class of elliptic partial differential equations with Dirichlet boundary conditions and where the operator is  $\mathrm{div} \left(-a(\mathbf{x}) \nabla \cdot\right)$, with $a$ continuous and po\-si\-ti\-ve  over $\overline \Omega$, $\Omega$ being an open and bounded subset of
$\mathbb{R}^d$, $d\ge 1$. For the numerical approximation, we
 consider the classical $\mathbb{P}_k$ Finite Elements, in the case of Friedrichs-Keller triangulations, which results a sequences of matrices of increasing size. The new results concern the spectral analysis of the resulting matrix-sequences in the direction of the global distribution in the Weyl sense, with a concise overview on localization, clustering, extremal eigenvalues, and asymptotic conditioning. We examine in detail the case of constant coefficients on $\Omega=(0,1)^2$ and provide a quick overview of the more complicated case of variable coefficients and more
 general domains. Tools are drawn from the Toeplitz technology and
 from the rather new theory of GLT
 sequences. Numerical results are included to demonstrate the theoretical findings.
\item In \textbf{Chapter 4:} Our focus is on multigrid technique for solving linear systems deriving from $\mathbb{Q}_k$ Finite Element approximations of elliptic PDEs with Dirichlet boundary conditions, where the operator is $\mathrm{div} \left(-a(\mathbf{x}) \nabla \cdot\right)$, with $a$ continuous and positive over $\overline{\Omega}=[0,1]^{2}$. While the analysis is performed in one dimension, the numerics are carried out also in higher dimension, showing an optimal behaviour in terms of the dependency on the matrix size and a substantial robustness with respect to the dimensionality \texorpdfstring{$d$}{d} and to the polynomial degree \texorpdfstring{$k$}{k}.

A final chapter with conclusions, open problems, and perspectives ends the current PhD Thesis.

\end{itemize}










\chapter{Notations, definitions and preliminaries}
The goal of the current chapter relies in laying a groundwork for the succeeding chapters by providing the notations, the definitions, and some known results, that will be used throughout the whole thesis.
\section{General notations}

\begin{itemize}

     \item The space of real (resp. complex) $n\times m$ matrices is indicated by $\mathbb{R}^{m\times n}$ (resp. $\mathbb{C}^{m\times n}$).



     \item \#$S$ denotes the cardinality of the set $S$.

     \item Let $p\in[1,\infty]$, the $p$-norm of the vector $\mathbf{x}$ is denoted by $\Vert {\mathbf{x}}\Vert_{p}$ and defined as:
  $$\| \mathbf{x}\|_{p}=\left\lbrace
  \begin{array}{l}
  \left( \sum_{i=1}^{n}\vert x_{i} \vert^{p} \right)^{1/p}\mbox{, if  }p\in[1,\infty),\\
 \max_{i=1,\ldots,n}{\vert x_{i} \vert}\hbox{, if  }p=\infty  .
\end{array}\right.   $$


   \item If $A\in \mathbb C^{m\times n}$, we denote by ${{A}}^{T}$ the transpose of ${A}$ and by ${{A}}^{*}$ the conjugate transpose of ${A}$.

   \item If $A\in \mathbb C^{m\times n}$, we denote by $A^\dag$ the Moore-Penrose pseudoinverse of $A$ \\($A^\dag=A^{-1}$ whenever $A$ is invertible).

    \item[$\bullet$]The identity matrix (resp. the zero matrix) of order $m$ is denoted by $I_{m}$ (resp. $O_{m}$).

    \item Let $A\in\mathbb{C}^{n\times n}$, the singular values and the eigenvalues of $A$ are denoted by $\sigma_1(A),\ldots,\sigma_n(A)$ and $\lambda_1(A),\ldots,\lambda_n(A)$, respectively.

    \item Given $A\in\mathbb{C}^{n\times n}$, $\Lambda(A)=\left\{\lambda_1(A),\ldots,\lambda_n(A)\right\}$ denotes the spectrum of $A$ and $\rho(A)=\max_{\lambda\in\Lambda(A)}|\lambda|$ is the spectral radius of $A$.

    \item A matrix $A\in\mathbb{C}^{n\times n}$ is said Hermitian if $A=A^*$.

    \item A matrix $A\in\mathbb{C}^{n\times n}$ is said Hermitian Positive Semi-Definite if $A$ is Hermitian and all its eigenvalues are positive. Furthermore $A$ is Hermitian Positive Definite if it is Hermitian and all its eigenvalues are nonnegative. Notice that there exist other equivalent characterisations, like that using the Rayleigh quotient.

    \item If $A\in\mathbb{C}^{n\times n}$ is Hermitian (which implies that its eigenvalues are real), $\lambda_{max}$ and $\lambda_{min}$ denote the maximal and minimal eigenvalue of $A$, respectively.

    \item A matrix $A\in\mathbb{C}^{n\times n}$ is said unitary if $A^{*}A=AA^{*}=I_n$.

    \item The matrix $A\in\mathbb{C}^{n\times n}$ is normal if and only if $A^{*}A=AA^*$.

     \item Given $A, B\in\mathbb{C}^{n\times n}$, if $A, B$ are Hermitian, the symbol $A>B$ (resp. $A\geq B$) is used to indicate that $A-B$ is Hermitian Positive Definite (resp. Hermitian Positive Semi-Definite).

    \item Given $A\in\mathbb{C}^{n\times n}$,  $A^{1/2}$ is the square root of $A$, if $A$ is Hermitian Positive Semi-Definite matrix.


     \item Given a matrix norm $\|\cdot\|$, the condition number of an invertible matrix $A\in\mathbb{C}^{n\times n}$ is defined by: $$\kappa(A)=\|A\|\|A^{-1}\|.$$

     \item If $A, B\in\mathbb{C}^{n\times n}$, $A\thicksim B$ indicates that the matrix $A$ is similar to the matrix $B$.


    \item The tensor (Kronecker) product of two matrices $A\in\mathbb{C}^{n_1\times m_1}$ and $B\in\mathbb{C}^{n_2\times m_2}$ (with $n_1,n_2,m_1,m_2\in\mathbb{N}^*$) is defined as $$A\otimes B=\begin{bmatrix}
    a_{11}B& a_{12}B&\cdots&a_{1n_2} B\\
    a_{21}B& a_{22}B&\cdots&a_{2n_2}B\\
    \vdots&\vdots &\ddots&\vdots\\
    a_{n_{1}1}B&a_{n_{1}2}B&\cdots&a_{n_{1}n_{2}}B\\
    \end{bmatrix}\in\mathbb{C}^{n_1 n_2\times m_1 m_2}.$$
    \item Let $\mathbf{y}=[y_j]_{j=1}^{n}\in\mathbb{C}^{n}$, $M=\underset{j=1,\ldots,n}{\textrm{diag}}(\mathbf{y})$ is the diagonal matrix whose entries are $y_1,y_2,\ldots,y_n$, i.e., $M_{jj}=y_j$, for $j=1,\ldots,n$.


     \item $\mu_t$ is used to indicate the Lebesgue measure in $\mathbb R^t$, $t\ge 1$.

     \item  We denote by
${C}_c(\mathbb R)$ (resp. ${ C}_c(\mathbb C)$) the space of
continuous complex-valued functions with bounded support defined
on $\mathbb R$ (resp. $\mathbb{C}$).

     \item Given two positive sequences  $\{ x_{n}\}_{n}$ and $\{y_{n}\}_{n}$
     \begin{itemize}
  \item The notation $x_{n}=o(y_{n})$ means that $x_{n}/y_{n}\longrightarrow 0$ as $n\longrightarrow\infty$.
  \item The notation $x_{n}=O(y_{n})$ is used to indicate that there exists $C>0$ such that $x_{n}\leq C y_{n}$, $\forall n\geq 0$.
     \end{itemize}
     \item For $z\in\mathbb C$ and $\epsilon>0$, let $B(z,\epsilon)$
the disk with centre $z$ and radius $\epsilon$, $$B(z,\epsilon):=\{w\in\mathbb C:\,|w-z|<\epsilon\}.$$
For $S\subseteq\mathbb C$ and $\epsilon>0$, we denote by $B(S,\epsilon)$ the $\epsilon$-expansion of $S$,
defined as $B(S,\epsilon):=\bigcup_{z\in S}B(z,\epsilon)$.
   \item$\hat{\imath}$ is the imaginary unit, $\hat{\imath}^2=-1$.
   \item Given two real valued functions $h_1,h_2$ defined on a domain $D$, the notation $(h_1\sim h_2)$ means that there exist two constants  $C_1,C_2\in\mathbb{R}^+$ such that $$\forall x\in D, \qquad C_1h_1(x)\leq h_2(x)\leq C_2h_1(x).$$
   \item Let $f:D\subseteq\mathbb{R}^{d}\rightarrow\mathbb{C}$. We say that the function $f$ has a zero of order $s$ at $x_0$ if $f(x)\sim\|x-x_0\|^s$ at least locally. If $D\subseteq\mathbb{R}$ and $f$ is smooth enough, then the general definition is reduced to the following relations
   $$\begin{array}{l}
   f^{(j)}(x_0)=0 \hbox{, for } j<s,\\
   f^{(s)}(x_0)\neq 0,
   \end{array}$$
   where $f^{(j)}(x_0)$ is the $j-$th derivative of $f(x)$ at $x_0$.
      \end{itemize}

 \subsection{Multi-index notation}

   \begin{itemize}

     \item {A vector $\mathbf{j} $ of the form $\mathbf{j}=(j_{1},j_{2},\ldots,j_{t})\in\mathbb{Z}^{t}$} is called a multi-index.

     \item {For all $\mathbf{n}\in\mathbb{N}_+^{t}$, the multi-dimensional length is} $N(\mathbf{n})=\prod_{i=1}^{t}n_{i}$.
     \item $\mathbf{0}, \mathbf{e}, \mathbf{2},\ldots$ denote respectively the vectors $(0,\ldots,0),(1,\ldots,1),(2,\ldots,2,)\ldots$.

     \item  $\mathbf{n}\rightarrow \infty$ has the meaning that $\min_ {1\leq j\leq t} n_j\rightarrow \infty $.

     \item Given $\mathbf{m},\mathbf{n}\in\mathbb{Z}^{t}$,
     \begin{enumerate}

       \item $\mathbf{m}\leq \mathbf{n}\Leftrightarrow m_{r}\leq n_{r}\hbox{, }\forall r=1,\ldots, t  $.
     \item $\mathbf{k}=\mathbf{m},\ldots,\mathbf{n}$ means that the multi-index $\mathbf{k}$ varies from $\mathbf{m}$ to $\mathbf{n}$ following the  lexicographical way. \\
      For example, if $t=2$ the ordering as follows:
   \begin{multline}
     (m_{1},m_{2}),(m_{1},m_{2}+1),\ldots,(m_{1},n_{2}),(m_{1}+1,m_{2}),\\ (m_{1}+1,m_{2}+1),\ldots,(m_{1}+1,n_{2}),\ldots,(n_{1},m_{2}),(n_{1},m_{2}+1),\ldots,(n_{1},n_{2}).
\end{multline}
     \item If $\mathbf{m}\leq \mathbf{n}$, $\sum_{\mathbf{k}=\mathbf{m}}^{\mathbf{n}}$ $\left(\hbox{resp. } \prod_{\mathbf{k}=\mathbf{m}}^{\mathbf{n}}\right)$ is used to indicate the summation (resp. product) over all the multi-indices $\mathbf{k}$ in the range $\mathbf{m},\ldots,\mathbf{n}$.
      \end{enumerate}
     \item If $\mathbf{n}\in\mathbb{Z}^{t}$, $\mathbf{x}=[x_{\mathbf{i}}]_{\mathbf{i=e}}^{\mathbf{n}} $ is a vector of size $N(\mathbf{n})$ and it is given by  \\
     $\mathbf{x}=
     \left[\begin{array}{c}
     x_{(1,1,\ldots,1,1)}\\
     x_{(1,1,\ldots,1,2)}\\
     \vdots\\
     x_{(1,1,\ldots,1,n_{t})}\\
     \hline
     x_{(1,1,\ldots,2,1)}\\
     x_{(1,1,\ldots,2,2)}\\
     \vdots\\
     x_{(1,1,\ldots,2,n_{t})}\\
     \hline
      \vdots\\
      \vdots\\
     \hline
     x_{(n_{1},n_{2},\ldots,1)}\\
     x_{(n_{1},n_{2},\ldots,2)}\\

     \vdots\\
     x_{(n_{1},n_{2},\ldots,n_{t})}
     \end{array}\right].$

     \item Given $\mathbf{n}\in\mathbb{Z}^{t}$, $\mathbf{X}=[x_{\mathbf{ij}}]_{\mathbf{i,j=e}}^{\mathbf{n}}$ is the $N(\mathbf{n})\times N(\mathbf{n})$ matrix. \\ For instance in the case $t=2$ the matrix $\mathbf{X}=[x_{\mathbf{ij}}]_{\mathbf{i,j=e}}^{\mathbf{n}}$  is given by:\\
     %

    $$ {\hspace{-1.2cm}\small\left[\resizebox{\textwidth}{!}{$\begin{array}{cccc|ccc|l|cccccc}
     x_{(1,1),(1,1)}& x_{(1,1),(1,2)}&\ldots&x_{(1,1),(1,n_{2})}&x_{(1,1),(2,1)}&\ldots&x_{(1,1),(2,n_{2})}&\ldots&x_{(1,1),(n_{1},1)}&\ldots& x_{(1,1),(n_{1},n_{2})} \\
     x_{(1,2),(1,1)}& x_{(1,2),(1,2)}&\ldots&x_{(1,2),(1,n_{2})}&x_{(1,2),(2,1)}&\ldots&x_{(1,2),(2,n_{2})}&\ldots&x_{(1,2),(n_{1},1)}&\ldots&x_{(1,2),(n_{1},n_{2})}\\
     \vdots&\vdots&&\vdots&\vdots&&\vdots&\cdots&\vdots&\cdots&\vdots\\
     \vdots&\vdots& &\vdots&\vdots& &\vdots&\ldots&\vdots&\vdots&\vdots\\
  x_{(1,n_{2}),(1,1)}&x_{(1,n_{2}),(1,2)}&\ldots & x_{(1,n_{2}),(1,n_{2})}&x_{(1,n_{2}),(2,1)}&\ldots&x_{(1,n_{2}),(2,n_{2})}&\ldots&x_{(1,n_{2}),(n_{1},1)}&\ldots&x_{(1,n_{2}),(n_{1},n_{2})}\\
  \hline
  x_{(2,1),(1,1)}&x_{(2,1),(1,2)}&\ldots&x_{(2,1),(1,n_{2})}&x_{(2,1),(2,1)}&\ldots&x_{(2,1),(2,n_{2})}&\ldots&x_{(2,1),(n_{1},1)}&\ldots&x_{(2,1),(n_{1},n_{2})}\\

      \vdots&\vdots&&\vdots&\vdots&&\vdots&\cdots&\vdots&\cdots&\vdots\\
  x_{(2,n_{2}),(1,1)}&x_{(2,n_{2}),(1,2)}&\ldots&x_{(2,n_{2}),(1,n_{2})}&x_{(2,n_{2}),(2,1)}&\ldots&x_{(2,n_{2}),(2,n_{2})}&\ldots&x_{(2,n_{2}),(n_{1},1)}&\ldots&x_{(2,n_{2}),(n_{1},n_{2})}\\
  \hline
  \vdots&\vdots&\vdots&\vdots&\vdots&\vdots&\vdots&\ldots&\vdots&\vdots&\vdots\\

\vdots&\vdots&\vdots&\vdots&\vdots&\vdots&\vdots&\ldots&\vdots&\vdots&\vdots\\
\vdots&\vdots&\vdots&\vdots&\vdots&\vdots&\vdots&\ldots&\vdots&\vdots&\vdots\\

\hline

      x_{(n_{1},1),(1,1)}&x_{(n_{1},1),(1,2)}&\ldots&x_{(n_{1},1),(1,n_{2})}&x_{(n_{1},1),(2,1)}&\ldots&x_{(n_{1},1),(2,n_{2})}&\ldots&x_{(n_{1},1),(n_{1},1)}&\ldots&x_{(n_{1},1),(n_{1},n_{2})}\\
      \vdots&\vdots&&\vdots&\vdots&&\vdots&\cdots&\vdots&\cdots&\vdots\\
  x_{(n_{1},n_{2}),(1,1)}&x_{(n_{1},n_{2}),(1,2)}&\ldots&x_{(n_{1},n_{2}),(1,n_{2})}&x_{(n_{1},n_{2}),(2,1)}&\ldots&x_{(n_{1},n_{2}),(2,n_{2})}&\ldots&x_{(n_{1},n_{2}),(n_{1},1)}&\ldots&x_{(n_{1},n_{2}),(n_{1},n_{2})}\\
  \end{array}$} \right]}.$$

   \end{itemize}

   \section{Matrix norms}
   Given  $A,B\in\mathbb C^{n\times n}$, a matrix norm is an application $\|\cdot\|:\mathbb{C}^{n\times n}\rightarrow\mathbb{R}^+$ satisfiying
   \begin{itemize}
   \item $\|A\|=0\Leftrightarrow A=0$;
   \item $\|\lambda A\|=|\lambda|\|A\|$, $\forall\lambda\in\mathbb{C}$;
   \item $\|A+B\|\leq\|A\|+\|B\|$;
   \end{itemize}

   There exists various type of matrix-norm:
   \begin{itemize}

   \item \subsection{The norms induced by vector norms}For $p\in[1,\infty]$, the $p$-norm of the matrix $A$ is defined by $$\|A\|_{p}=\sup_{u\in\mathbb{C}^{n}, \|u\|_{p}=1}\|Au\|_{p}=\sup_{u\in\mathbb{C}^{n},u\neq 0}\dfrac{\|Au\|_{p}}{\|u\|_{p}}. $$ For $p=1,2,\infty$, the norm $\|A\|_{p}$ is presented as follows \cite{golub2013matrix}:\\
   $$\begin{array}{l}
   \|A\|_{1}=\max_{j=1,\ldots,n}\sum_{i=1}^{n}|a_{ij}|,\\
     \\
   \|A\|_{2}=\sqrt{\rho(A^{*}A)}=\sigma_{max} \hbox{ (spectral norm) ,}\\
    \\
   \|A\|_{\infty}=\max_{i=1,\ldots,n}\sum_{j=1}^{n}|a_{ij}|.
   \end{array}$$

   \item \subsection{Frobenius norm}
$$\|A\|_{F}=\left(\textrm{tr}(A^{*}A)\right)^{1/2}=\left(\sum_{i=1}^{n}\sum_{j=1}^{n}|a_{ij}|^{2}\right)^{1/2}=\sqrt{\sigma_{1}^{2}(A)+\ldots+\sigma_{n}^{2}(A)}.$$

   \item \subsection{The Schatten norms}
    Given $1\le p\le\infty$,  $\|| A\||_p$ is the Schatten $p$-norm of $A$, i.e., the $p$-norm of the vector $\sigma(A)=(\sigma_1(A),\ldots,\sigma_{n}(A))$. 
        \\The Schatten $\infty$-norm $\||A\||_\infty$ is the largest singular value of $A$ and coincides
        with the spectral norm  $$\||A\||_\infty=\sigma_{max}=\|A\|_{2}.$$ \\The Schatten 1-norm $\|A\|_1$ is the sum of the singular values of $A$ and is often referred to as the trace-norm of $A$ $$\||A\||_1=\sum_{i=1}^{n}\sigma_i(A).$$ The Schatten 2-norm $\||A\||_2$ coincides with the  Frobenius norm of $A$ $$\||A\||_{2}=\|A\|_{F}).$$ For more details on Schatten $p$-norms, see \cite{bhatia1997matrix}.

\item \subsection{The Euclidean norm weighted by a matrix}

If $A\in\mathbb{C}^{n\times n}$ is an Hermitian Positive Definite matrix, then the Euclidean norm of a matrix weighted by $A$ is defined by
\begin{equation}\label{eclnorm}
\|B\|_{A}=\max_{\|x\|_{A}=1}\|Bx\|_{A}
\end{equation}
where $\|x\|_A=\|A^{\frac{1}{2}}x\|_2$ is called the energy norm.\\
It is easy to verify $\|B\|_{A}=\|A^{\frac{1}{2}}BA^{-\frac{1}{2}}\|_{2}$. In fact,
\begin{equation}\|B\|_{A}=\max_{\|x\|_{A}=1}\|Bx\|_{A}=\max_{\|A^{\frac{1}{2}}x\|_2=1}\|A^{\frac{1}{2}}Bx\|_{2}=\max_{\|y\|_2=1}\|A^{\frac{1}{2}}BA^{-\frac{1}{2}}y\|_{2}=\|A^{\frac{1}{2}}BA^{-\frac{1}{2}}\|_{2}
\end{equation}
where $y=A^{\frac{1}{2}}x$.

   \end{itemize}

      Here, we report some useful properties on the matrix norms that are often utilised, (see \cite{golub2013matrix}).

   \begin{enumerate}
   \item The induced norms are sub-multiplicative {,} i.e $\forall A,B\in\mathbb{C}^{n\times n}$,\\ $\|AB\|_{p}\leq\|A\|_{p}\|B\|_{p}.$

\item The induced norms satisfy $\rho(A)\leq\|A\|_{p}$, $\forall A\in\mathbb{C}^{n\times n}$.


 \item {The Schatten $p-$norms are unitarily invariants, i.e}
   $\||UAV\||_{p}=\||A\||_{p},\\ \forall A\in\mathbb{C}^{n\times n}$
 and $\forall\hbox{ } U,V\in\mathbb{C}^{n\times n}$ unitary matrices.

   \item The estimation of the spectral norm can be deduced by the following relation
   $$\|A\|_{2}\leq\sqrt{\|A\|_{1}\|A\|_{\infty}}\hbox{, }\forall A\in\mathbb{C}^{n\times n}.$$

\item $\max_{1\leq i,j\leq n}|a_{ij}|\leq\|A\|_{2}\leq n\max_{1\leq i,j\leq n}|a_{ij}|$.
\item If $B\in\mathbb{C}^{n-1\times n-1}$ is sumbatrix of $A\in\mathbb{C}^{n\times n}$ then $\|B\|_{p}\leq\|A\|_{p}$.
\item The Frobenius norm is unitarily invariant, that is, $\forall A\in\mathbb{C}^{n\times n}$, $\forall\hbox{ } U,V\in\mathbb{C}^{n\times n} $  unitary matrices, $\|UAV\|_{F}=\|A\|_{F}$. The same is true for every Schatten norm.
   \item The Frobenius norm is equivalent to the spectral norm and the following inequalities hold
$$\|A\|_{2}\leq \|A\|_{F}\leq\sqrt{n}\|A\|_{2}.$$
   \item All the norms acting on {f}inite dimensional spaces are topologically equivalent.
    \end{enumerate}

\section{$L^{p}$ spaces}
Let $D\subseteq\mathbb{R}^{t}$ a measurable set, for $p\geq 1$ we define
\begin{itemize}

\item  The space $L^{p}(D)=\left\lbrace f:D \longrightarrow \mathbb{C}\mbox{ : }  f\textrm{  measurable}\mbox{, such that }\displaystyle\int_{D} \vert f\vert<\infty \right\rbrace $.\\
  \item The space of all measurable functions essentially bounded is denoted by:\\
      $$L^{\infty}(D)=\left\lbrace f:D\longrightarrow \mathbb{C}\mbox{ : }  f\textrm{ measurable}\mbox{, such that } \textrm{ess}\sup_{D}\vert f\vert<\infty \right\rbrace .$$
  \item If $f\in L^{p}(D)$, the $L^{p}$-norm of the function $f$ is denoted by $\Vert f\Vert_{p}$ and defined as:
  $$
  \begin{array}{l}
  \| f\|_{p}=\left( \int_{D}\vert f \vert^{p} \right)^{1/p}\mbox{, if  }p\in[1,\infty),\\
 \| f\|_{\infty}=\textrm{ess}\sup_{D}\vert f\vert\hbox{, if  }p=\infty.  \end{array}   $$
\end{itemize}
Let $\mathbf{f} $ be a matrix-valued function $\mathbf{f} :D\subseteq\mathbb{R}^{t}\longrightarrow \mathbb{C}^{r\times r}$, for $p\in[1,\infty)$ we set
$$L^{p}(D,r)=\left\lbrace \mathbf{f}  :D\longrightarrow \mathbb{C}^{r\times r} \hbox{ such that } \mathbf{f}\in L^{p}(D) \right\rbrace .$$

\begin{itemize}
\item A matrix-valued function $\mathbf{f}$ is said to be measurable (resp.continuous, Riemann-integrable, in $L^p(D)$, etc.) if its
components $f_{\alpha\beta}:D\to\mathbb{C},\
\alpha, \beta=1,\ldots,r$, are measurable (resp. continuous,
Riemann-integrable, in $L^p(D)$, etc.).
 \item If $\mathbf{f}_m,\mathbf{f}:D\subseteq\mathbb
R^t\to\mathbb C^{r\times r}$ are measurable matrix-valued
functions, we say that $\mathbf{f}_m$ converges to $\mathbf{f}$ in measure (resp., almost everywhere, in $L^p(D)$, etc.) if $(\mathbf{f}_m)_{\alpha\beta}$ converges to
$\mathbf{f}_{\alpha\beta}$ in measure (resp., almost everywhere, in $L^p(D)$, etc.) for
all $\alpha,\beta=1,\ldots,r$.

\item If $\mathbf{f}\in L^{p}(D,r)$, we define
$$  \begin{array}{ll}
  \| \mathbf{f}\|_{p}=\left( \int_{D} \| \mathbf{f}(\boldsymbol{\theta})\|_{p}^{p} \right)^{1/p},&\mbox{if  }p\in[1,\infty),\\
 \| \mathbf{f}\|_{\infty}=\textrm{ess}\sup_{\boldsymbol{\theta}\in D}\| \mathbf{f}(\boldsymbol{\theta})\|_{\infty},&\hbox{if  }p=\infty.  \end{array} $$

 \item If $1\leq p,q\leq\infty$ such that $\frac{1}{p}+\frac{1}{q}=1$ and $\textbf{f}\in L^{p}(D,r)$, $\mathbf{g}\in L^{q}(D,r)$ then the inequality of Holder holds for $p$-schatten norm
 $$\|\mathbf{f}\mathbf{g}\|_{1}\leq\|\mathbf{f}\|_{p}\|\mathbf{g}\|_{q}.$$

 \item When $\mathbf{f}$ is a matrix-valued function, $\sigma_{i}(\mathbf{f}(\boldsymbol{\theta}))$ (resp. $\lambda_{i}(\mathbf{f}(\boldsymbol{\theta}))$), $i=1, \ldots, r$ is used to indicate the evaluation of singular value (resp. the eigenvalue) functions at the point $\boldsymbol{\theta}\in D$.

\section{Iterative Methods }
 Now, in this part, we will start by providing a brief description of the two major types of approaches for solving a linear system. We will next focus on the iterative approach, discussing preconditioning for Toeplitz structures. The Multigrid method is finally introduced.
 \\Suppose we have the following linear system
 \begin{equation}\label{linearpb}
 A\mathbf{x}=\mathbf{b},
 \end{equation} where $\mathbf{b}\in\mathbb{C}^n$ and $A\in\mathbb{C}^{n\times n}$ is invertible, thus the system admits only one solution.\\
  Numerical techniques for solving the linear system (\ref{linearpb}) break down into two major classes:
  \begin{enumerate}
  \item \textbf{Direct methods}: they consist in converting a generic linear system through a finite number of operations into a linear system with a specific structure that simplifies its resolution (for instance Gauss elimination method, $LU$ and Cholesky factorization etc...)

  \item \textbf{Iterative methods}: they consist in building a sequence of vectors $\{\mathbf{x}^{(k)}\}_{k}$ converging to exact solution $\overline{\mathbf{\mathbf{x}}}=A^{-1}b$ such that $\underset{k\rightarrow\infty}\lim \mathbf{x}^{(k)}=\overline{\mathbf{\mathbf{x}}}$. These types of methods are effective for sparse matrices because they retain the sparsity of the matrix, and hence they are highly favored for large problems, {as} they demand less computer's memory. They are also more suitable for solving ill-conditioned problems {since} they allow a better control of the error.

  \end{enumerate}

\begin{Definition}
An iterative method for solving the linear system (\ref{linearpb}) has the form \begin{equation}\label{iterpb}
\mathbf{x}^{(k)}=Q\mathbf{x}^{(k-1)}+q\quad k=1,2,\ldots, \quad\mbox{ and } \mathbf{x}^{(0)} \mbox{ is an initial vector }
\end{equation}
where $Q\in\mathbb{C}^{n\times n}$ is called the iteration matrix and $q\in\mathbb{C}^{n}$ is a fixed vector. The method (\ref{iterpb}) is called stationary iterative method, where the word "stationary" highlights the fact that the matrix of iteration $Q$ is fixed a priori.
\end{Definition}
A method as in ($\ref{iterpb}$) is obtained by decomposing the matrix $A$ in the form $A=M-N$, where $M\in\mathbb{C}^{n\times n}$ is an invertible matrix. We can then
rewrite the system ($\ref{linearpb}$) in the form $\mathbf{x} = M ^{-1}(N\mathbf{x}+ b)$,  setting $Q=M^{-1}N$ and $q=M^{-1}b$ we obtain (\ref{iterpb}).
\begin{Definition}
The method (\ref{iterpb}) is said to be consistent with (\ref{linearpb}) if the solution $\overline{\mathbf{\mathbf{x}}}$ of (\ref{linearpb}) is a fixed point, i.e., $\overline{\mathbf{\mathbf{x}}}=Q\overline{\mathbf{\mathbf{x}}}+q$.
\end{Definition}

\begin{Definition}
We say that the method (\ref{iterpb}) is convergent if for every initial choice $\mathbf{x}^{(0)}\in\mathbb{C}^{n\times n}$, the sequence $\{\mathbf{x}^{(n)}\}_{n\in\mathbb{N}}$ converges to $\overline{\mathbf{\mathbf{x}}}$  \\$(\lim_{k\rightarrow\infty}\|\overline{\mathbf{\mathbf{x}}}-\mathbf{x}^{(k)}\|=0)$.
\end{Definition}
The next {theorem} establishes the link between the convergence of the method (\ref{iterpb}) and the iteration matrix $Q$

\begin{Theorem}\cite{ciarlet1990introduction}
Let $Q\in\mathbb{C}^{n\times n}$, then the following propositions are equivalent:
\begin{itemize}
\item The method (\ref{iterpb}) is convergent for any choice of $b\in\mathbb{C}^{n}$ and $\mathbf{x}^{(0)}$;
\item $\rho(Q)<1$;
\item There exists  $p$ such that $\|Q\|_{p}<1$.
\end{itemize}
\end{Theorem}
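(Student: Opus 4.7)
The plan is to prove the three statements equivalent via the chain $(3)\Rightarrow(2)\Rightarrow(1)\Rightarrow(2)\Rightarrow(3)$, where the equivalence between $(1)$ and $(2)$ is the core of the argument and the role of $(3)$ is to sandwich $\rho(Q)$ by a tractable induced norm.

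First I would reduce the convergence question to the behavior of the powers $Q^k$. Since the method is consistent, the exact solution $\overline{\mathbf{x}}$ satisfies $\overline{\mathbf{x}}=Q\overline{\mathbf{x}}+q$; subtracting this identity from \eqref{iterpb} and defining the error $\mathbf{e}^{(k)}=\overline{\mathbf{x}}-\mathbf{x}^{(k)}$ yields $\mathbf{e}^{(k)}=Q\,\mathbf{e}^{(k-1)}$, hence $\mathbf{e}^{(k)}=Q^k\mathbf{e}^{(0)}$. Since $\mathbf{x}^{(0)}$ is arbitrary, so is $\mathbf{e}^{(0)}$, and convergence of the method for every initial choice is then equivalent to $Q^k\to O$ as $k\to\infty$ (in any fixed matrix norm, by topological equivalence of norms on $\mathbb{C}^{n\times n}$).

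Next I would prove that $Q^k\to O$ if and only if $\rho(Q)<1$. The easy direction: if $\rho(Q)\ge 1$, pick an eigenpair $(\lambda,v)$ with $|\lambda|\ge 1$; then $\|Q^k v\|=|\lambda|^k\|v\|\not\to 0$, so $Q^k\not\to O$. For the converse, I would use the Jordan canonical form $Q=PJP^{-1}$. Each Jordan block $J_m(\lambda)$ of size $m$ associated with an eigenvalue $\lambda$ of $Q$ satisfies, for $k\ge m$, an entry-wise bound of the type $|[J_m(\lambda)^k]_{ij}|\le\binom{k}{m-1}|\lambda|^{k-m+1}$; since $|\lambda|\le\rho(Q)<1$, the polynomial factor in $k$ is beaten by the exponential $\rho(Q)^k$ and every block tends to the zero matrix, whence $Q^k=PJ^kP^{-1}\to O$.

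For the equivalence with the third condition, one direction is immediate: if there exists $p\in[1,\infty]$ with $\|Q\|_p<1$, then by property (2) of induced norms recalled above one has $\rho(Q)\le\|Q\|_p<1$. The converse, assuming $\rho(Q)<1$, proceeds through the classical Householder-type construction: for every $\varepsilon>0$ there exists an invertible $S\in\mathbb{C}^{n\times n}$ and an induced norm $\|\cdot\|_{\star}:=\|S\cdot S^{-1}\|_{\infty}$ (i.e., the $\infty$-norm weighted by $S$) such that $\|Q\|_{\star}\le\rho(Q)+\varepsilon$; choosing $\varepsilon<1-\rho(Q)$ yields $\|Q\|_{\star}<1$. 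The matrix $S$ is obtained by a similarity transformation that puts $Q$ into a form whose strictly upper-triangular part is arbitrarily small (for instance a modified Jordan form where the $1$'s on the superdiagonal are rescaled to $\varepsilon$ by conjugation with a diagonal matrix), so that the $\infty$-norm of the transformed matrix equals the largest row sum and is controlled by $\rho(Q)+\varepsilon$.

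The main obstacle in my view is the last step, because the statement as written presupposes that among the induced $p$-norms one can always find a contractive one, while the natural proof produces a norm induced by $\|S\cdot S^{-1}\|_\infty$ rather than one of the classical $\|\cdot\|_1,\|\cdot\|_2,\|\cdot\|_\infty$. I would therefore interpret the condition (3) in the broader sense of \emph{some} induced norm (a reading consistent with Ciarlet's reference) and, for completeness, cross-check via the Gelfand formula $\rho(Q)=\lim_{k\to\infty}\|Q^k\|_p^{1/k}$: if $\rho(Q)<1$, choose $k$ large enough so that $\|Q^k\|_p<1$ for a fixed $p\in\{1,2,\infty\}$, then an induced norm defined via the telescoping sum $\|M\|_{\star}:=\sum_{j=0}^{k-1}\alpha^{-j}\|Q^jM\|_p$ with an appropriate $\alpha\in(\rho(Q),1)$ makes $Q$ a strict contraction, closing the loop.
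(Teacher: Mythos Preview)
The paper does not give its own proof of this theorem: it is stated with a citation to \cite{ciarlet1990introduction} and used as background in the preliminaries. Your argument is the classical one found in that reference, so there is nothing substantive to compare.

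Your proof is correct, and you are right to flag the interpretation of condition~(3). As the paper's own notation fixes $\|\cdot\|_p$ to be the induced $p$-norm with $p\in[1,\infty]$, the literal reading of~(3) is false in general (e.g.\ the nilpotent matrix $Q=\bigl(\begin{smallmatrix}0&2\\0&0\end{smallmatrix}\bigr)$ has $\rho(Q)=0$ yet $\|Q\|_p\ge 2$ for every such $p$). The intended statement, and the one in Ciarlet, is that some subordinate matrix norm makes $Q$ a strict contraction; your Householder-type construction via the rescaled Jordan form is exactly the standard way to produce it.
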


Now, we provide the definition of the optimality for the iterative methods, which is useful for evaluating the efficiency and the performance of the method.
\begin{Definition}[Optimality]
An iterative method is said to be optimal for a class of a sequence of linear systems $\{A_n\mathbf{x}_n=b_n\}_n$ if
\begin{enumerate}
\item the number of iterations is bounded by a constant and independent from the size of the matrix $A_n$,
\item each iteration has a cost proportional to the matrix-vector product with matrix $A_n$.
\end{enumerate}

\end{Definition}

 \subsection{Preconditioning}
The basic principle behind the preconditioning process is to replace the ill-conditioned system
\begin{equation}
 A_{n}\mathbf{x}=\mathbf{b}
 \end{equation}
where $A_n\in\mathbb{C}^{d_n\times d_n}$, $\mathbf{x,b}\in\mathbb{C}^{d_n}$ ($\kappa(A_{n})\rightarrow\infty$ when $n\rightarrow\infty$) by an analogous system $\tilde{A}_n\mathbf{x}=\tilde{\mathbf{b}}$ for which the new system converges {faster} than the original system (the number of iterations required for the convergence is reduced). \\
A common strategy is to consider a Hermitian Positive Definite matrix  \\$P\in\mathbb{C}^{d_n\times d_n}$ such that
 \begin{equation}
 P^{-1}_nA_n\mathbf{x}=P^{-1}_n\mathbf{b}.
 \end{equation}
 Since the convergence of the iterative method depends a lot on the spectral properties of the associated matrix, $P_n$ should be chosen in such a way, the matrix $P_n^{-1}A_n$ is better conditioned and its spectrum is clustered around one. Consequently, the matrix $P_n$  should be determined in accordance with the following conditions:
 \begin{enumerate}
 \item {The construction of the preconditioner demands  at most the cost of $A_n\mathbf{x}$;}
 \item The solution of $P_n\mathbf{x}=\mathbf{y}$ has {at most} the cost of {$A_n\mathbf{x}$ product};
 \item $P_n$  has the same spectral properties as $A_n$, i.e; the spectrum of\\$P_n^{-1}A_n-I_n$ is strongly clustered around zero or the condition number of $P_n^{-1}A_n$ is bounded by $c$, i.e $\exists c>0,\kappa(P_n^{-1}A_n)<c$, $\forall n$.
\end{enumerate}
There are two extreme cases: $P_n=I_n$ satisfies the requirements 1.,{2.} but not 3., in which case convergence is not accelerated; when $P_n=A_n$ satisfies the first and the third requirements, but it fails to meet the second: in this case the convergence is immediate but not beneficial, due to the difficulty of the problem. Creating a preconditioner that is efficient should help to strike a good compromise between the three requirements of creating a matrix $P_n$ similar to $A_n$, at least spectrally, but less computationally expensive to invert.

 \subsubsection{Preconditioners for Toeplitz matrices}
 Because there is so much information on preconditioners for structured matrices in the literature, we will only mention a few results in the case of Toeplitz matrices, which have been explored by many authors, see for instance \cite{chan1996conjugate,ng2004iterative,serra1999korovkin}, (for more information regarding Toeplitz matrices, see Section \ref{ToepCirc}).
\begin{itemize}
\item\textbf{Strang's preconditioner}: In 1986 G.Strang \cite{strang1986proposal} introduced the first {Circulant} preconditioner (for more information concerning Circulant matrices, see Section \ref{ToepCirc}) given by
$$S=\begin{bmatrix}
 a_0&a_1 &\cdots &a_{\lfloor\frac{n}{2}\rfloor} &a_{\lfloor\frac{n-1}{2}\rfloor} & \cdots& a_2 &a_1 \\
a_1 &a_0 &a_1 &\cdots & a_{\lfloor\frac{n}{2}\rfloor}&a_{\lfloor\frac{n-1}{2}\rfloor} &\cdots &a_2 \\
 \vdots&a_1 &a_0 &a_1 & \cdots&\ddots &\ddots &\vdots \\
 a_{\lfloor\frac{n}{2}\rfloor}&\ddots &\ddots &\ddots &\ddots &\ddots &\ddots &a_{\lfloor\frac{n-1}{2}\rfloor} \\
 a_{\lfloor\frac{n-1}{2}\rfloor}&a_{\lfloor\frac{n}{2}\rfloor} &\ddots &\ddots &\ddots & \ddots& \ddots&a_{\lfloor\frac{n}{2}\rfloor} \\
\vdots &\ddots &a_{\lfloor\frac{n}{2}\rfloor} &\cdots &\ddots &a_0 &a_1 & \vdots\\
 a_2&\cdots &a_{\lfloor\frac{n-1}{2}\rfloor} &a_{\lfloor\frac{n}{2}\rfloor} &\cdots &a_1 &a_0 &a_1 \\
a_1 &a_2 &\cdots &a_{\lfloor\frac{n-1}{2}\rfloor} &a_{\lfloor\frac{n}{2}\rfloor} &\cdots &a_1 & a_0 \\
\end{bmatrix}$$
where the coefficients $a_j$ are the {coefficients} of the Toeplitz matrix\\$T_n(a)=[a_{i-j}]_{i,j=1,\cdots,n}$. {The matrix $S$ has interesting properties given by} $S=\underset{C:{\rm Circulant}}{\rm{arg}\min}\|C-T_n(a)\|_1$ and $S=\underset{C:{\rm Circulant}}{\rm{arg}\min}\|C-T_n(a)\|_\infty$.\\
Under some conditions on the generating function $a$ the PCG method with the Strang preconditioner {converges} superlinearly{, see} \cite{chan1996conjugate,estatico2008superoptimal,serra1999superlinear}.
\item\textbf{T.Chan's preconditioner}: Chan's preconditioner is given by
$$\hat{C}=\underset{C:{\rm Circulant}}{\rm{arg}\min}\|C-T_n(a)\|_F$$  and the entries of the matrix are given by
$$c_j=\left\{
\begin{array}{ll}
\frac{(n-j)a_j+ja_{j-n}}{n},& 0\leq j<n\\
c_{n+j,}&0<-j<n

\end{array}.\right.$$
The optimality of the PCG with $P_n=\hat{C}$ is ensured when the generating function $a$ is positive continuous function, see \cite{chan1992circulant,serra1999korovkin,serra1999superlinear}.

\end{itemize}

 \subsection{Multigrid Methods}


The classical iterative methods produce a poor approximation of the solution when $\kappa(A_n)$ diverges as $n\rightarrow\infty$. The slow convergence is caused by the space generated by the eigenvectors of $A_n$ associated to eigenvalues that are close to zero, resulting in global slowdown of the method. This impediment can be addressed by using either preconditioning in Krylov algorithms or Multigrid methods. The latter methods are extremely effective for dealing with quite ill-conditioned linear systems, under the condition of having a partial knowledges of the ill-conditioned subspaces.
The principle of Multigrid, when applied to differential problems, is to speed up the convergence of a basic iterative approach, which in general reduces effectively the oscillatory modes of the error (short-wavelength errors are eliminated), and solves the problem in a coarse-grid (where the resolution of the problem is cheaper, and the smooth mode of the error appears more oscillatory). The coarse problem contains also both short and long-wavelength errors. Additionally, a combination of relaxation and coarser grids can be used to overcome the problem. Then the procedure is continued until a grid is found where the cost is much cheaper compared to the first one (the finer one). For a detailed study of Multigrid methods, we recommend \cite{briggs2000multigrid,Hack,ruge1987algebraic}. In this section, we concentrate on the Algebraic Multigrid method by presenting Two-Grid Method and the V-cycle method.


Consider the linear system $A_n\mathbf{x}=\mathbf{b}$ where $A_n\in\mathbb{C}^{n\times n}$ and $\mathbf{x},\mathbf{b}\in\mathbb{C}^{n}$
and let $\mathbf{x}^{(k)}$ an approximation to $\overline{\mathbf{x}}$. There are two important measures
\begin{itemize}
\item The algebraic error $$\mathbf{e}^{(k)}=\overline{\mathbf{x}}-\mathbf{x}^{(k)},$$
\item The residual $$\mathbf{r}^{(k)}=\mathbf{b}-A_n\mathbf{x}^{(k)},$$

\end{itemize}
however, since the algebraic error is not available {whenever} the exact solution $\overline{\mathbf{x}}$ is unknown, we use the residual which compute how much the approximation fails to solve the original problem $A_n\mathbf{x}=\mathbf{b}$.
By uniqueness of the solution $\mathbf{r}^{(k)}=\mathbf{0}_n\Longleftrightarrow \mathbf{e}^{(k)}=\mathbf{0}_n$.\\
Since $A_n\overline{\mathbf{x}}=\mathbf{b}$ and by using the definitions of the residual and the algebraic error we obtain the following residual equation $$A_n\mathbf{e}^{(k)}=\mathbf{r}^{(k)}.$$
In fact $\mathbf{r}^{(k)}=\mathbf{b}-A_n\mathbf{x}^{(k)}=A_n\overline{\mathbf{x}}-A_n\mathbf{x}^{(k)}=A_n\mathbf{e}^{(k)}.$\\
When $\mathbf{b}$ of the original system is substituted by $\mathbf{r}^{(k)}$, the solution $\mathbf{e}^{k}$ of the residual equation satisfies the same set of equations as the unknown $\mathbf{x}$.

We now have a handle on how the residual equation will be put to use: after obtaining the approximation $\mathbf{x}^{(k)}$ by some iterative method, we compute the residual $\mathbf{r}^{(k)}=\mathbf{b}-A_n\mathbf{x}^{(k)}$, solve the residual equation and lastly apply the residual correction to improve the approximation $\mathbf{x}^{(k+1)}= \mathbf{x}^{(k)}+\mathbf{e}^{(k)}$.\\
The residual equation $A_n\mathbf{e}^{(k)}=\mathbf{r}^{(k)}$ is not simpler to solve than the original problem; however, here we know that the exact solution {'the error'} is smooth if $x^{(k)}$ is obtained by an iterative method, contrary to the original problem, we do not know anything about the behaviour of the exact solution. Therefore instead of solving the residual equation in the fine grid, we project it into the coarse grid where the problem will be solved; then, by re-projecting the solution found on the coarse grid into the fine grid, we obtain a new approximate solution that is considerably closer to $\overline{\mathbf{x}}$ than $\mathbf{x}^{(k)}$ was, this procedure called   Coarse Grid Correction (CGC).\subsubsection{Algebric Two-Grid Method (TGM)}\label{sec:multigrid_theory}
We start by taking  into consideration
 the generic linear system $A_n \mathbf{x}_n =\mathbf{b}_n$ with large
dimension $n$, where $A_n \in \mathbb{C}^{n\times n}$ is a
Hermitian Positive Definite matrix and $\mathbf{x}_n,\mathbf{b}_n \in
\mathbb{C}^{n}$. Let $n_0=n >n_1> \ldots
>n_s> \ldots > n_{s_{\min}}$ and let $P^{s+1}_s\in
\mathbb{C}^{n_{s+1}\times n_{s}}$  be a full-rank matrix for
any $s$. At last, let us denote by $\mathcal{V}_s$  a class of
stationary iterative methods for given linear systems of dimension
$n_s$.\\
In accordance with \cite{Hack}, the algebraic two-grid
Method (TGM) can be easily seen a stationary iterative method whose generic steps are reported below.

\[
\begin{tabular}{lcl}
     \multicolumn{3}{c} {$\mathbf{x}_s^{\mathrm{out}}=\mathcal{TGM}(s,\mathbf{x}_s^{\mathrm{in}},\mathbf{b}_s)$} \\ 
     \hline \\
    \fbox{\begin{tabular}{l} $\mathbf{x}_s^{\mathrm{pre}}=\mathcal{V}_{s,\mathrm{pre}}^{\nu_\mathrm{pre}}(\mathbf{x}_s^{\mathrm{in}},b_s)
    \hskip 0.75cm \phantom{pA}
    $ \end{tabular} }  && {Pre-smoothing iterations}\\
    \ \\
   \fbox{\begin{tabular}{l}
          $\mathbf{r}_{s}=\mathbf{b}_s-A_s \mathbf{x}_s^{\mathrm{pre}}$ \\
          $\mathbf{r}_{s+1}=(P^{n_{s+1}}_{n_s})^*_{n_s} \mathbf{r}_{s}$     \\
          $A_{s+1}=(P^{n_{s+1}}_{n_s})^* A_s P^{n_{s+1}}_{n_s}$  \\
          $\mathrm{Solve\ } A_{s+1}\mathbf{y}_{s+1}=\mathbf{r}_{s+1}$  \\
          $\hat {\mathbf{x}}_s=\mathbf{x}_s^{\mathrm{pre}}+P^{n_{s+1}}_{n_s} \mathbf{y}_{s+1}$\\
    \end{tabular}
    } && {Exact Coarse Grid Correction {(CGC)}}\\
    \ \\
    \fbox{\begin{tabular}{l}$\mathbf{x}_s^{\mathrm{out}}=\mathcal{V}_{s,\mathrm{post}}^{\nu_\mathrm{post}}(\hat
    {\mathbf{x}}_s,\mathbf{b}_s)\hskip 0.8cm \phantom{pA} $\end{tabular}} && {Post-smoothing iterations} \\ 
\end{tabular}
\]
where we refer to the dimension  $n_s$ by means of its subscript
$s$.

In the first and last  steps, a \emph{pre-smoothing
iteration} and a \emph{post-smoothing iteration} are applied
 $\nu_{\rm pre}$ times and $\nu_{\rm post}$ times, respectively, in accordance with the
considered stationary iterative method in the class $\mathcal{V}_s$.
Furthermore, the intermediate steps define the  \emph{exact
coarse grid correction operator}, which is depending on the considered
projector operator $P^{s+1}_s$.
The resulting  iteration matrix of the TGM is then defined as
\begin{eqnarray}
 TGM_s &=& V_{s,\mathrm{post}}^{\nu_\mathrm{post}}
           CGC_s
           V_{s,\mathrm{pre}}^{\nu_\mathrm{pre}},\\
 CGC_s &=& {I^{(s)}}-P^{n_{s+1}}_{n_s}  A_{s+1}^{-1} (P^{n_{s+1}}_{n_s})^{*} A_s \quad
A_{s+1}= (P^{n_{s+1}}_{n_s})^* A_s P^{n_{s+1}}_{n_s},
\label{eq:CGC_s}
\end{eqnarray}
where $V_{s,\mathrm{pre}}$ and $V_{s,\mathrm{post}}$ represent the
pre-smoothing and post-smoothing iteration matrices, respectively, and { $I^{(s)}$ is the identity matrix at the $s$th level}.\\
The following theorem allows us to estimate an upper bound on the TGM's speed of convergence.
\begin{Theorem}\cite{ruge1987algebraic}
Let $A\in\mathbb{C}^{n\times n}$ be a Hermitian Positive Definite matrix, $P_{m}^{n}\in\mathbb{C}^{n\times m}$ a full-rank matrix with $n>m$  and $V_{n,\mathrm{post}}$ a post-smoothing iteration matrix. If we assume
$$\begin{array}{ll}
(i)\hbox{  }\exists\alpha_{\mathrm{post}}>0:\hbox{  } \|V_{n,\mathrm{post}}\mathbf{x}_n\|^{2}_{A_n}\leq \|\mathbf{x}_n\|^{2}_{A_n}-\alpha_{\mathrm{post}}\|\mathbf{x}_n\|^{2}_{{A_n}^{2}},\hbox{ }\forall\mathbf{x}\in\mathbb{C}^{n}, &\hbox{(smoothing property)}\\
(ii)\hbox{  }\exists\beta>0:\hbox{  }\min_{\mathbf{y}\in\mathbb{C}^{n\times n}}\|\mathbf{x}_{n}-P_{m}^{n}\mathbf{y}\|_{2}^{2}\leq\beta\|\mathbf{x}_n\|^{2}_{A_n},\hbox{ }\forall\mathbf{x}\in\mathbb{C}^{n},&\hbox{(approximation property)}
\end{array}$$
Then $\beta>\alpha_{\mathrm{post}}$ and $$\|TGM\|_{A_n}\leq\sqrt{1-\frac{\alpha_{\mathrm{post}}}{\beta}}.$$
\end{Theorem}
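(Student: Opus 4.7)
The plan is to exploit the fact that the coarse grid correction $CGC$ is the $A_n$-orthogonal projector onto the $A_n$-orthogonal complement of $\mathrm{range}(P_m^n)$, and then to bridge assumption \emph{(ii)}, which is phrased in the Euclidean norm, with assumption \emph{(i)}, which is phrased in the $A_n$-norm, via a Cauchy--Schwarz argument. First, starting from the formula
$$CGC = I - P_m^n \bigl((P_m^n)^{*} A_n P_m^n\bigr)^{-1} (P_m^n)^{*} A_n,$$
I would verify $CGC^{2}=CGC$ and $(A_n\, CGC)^{*}=A_n\, CGC$, so that $CGC$ is an $A_n$-self-adjoint projector. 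Two immediate consequences will be used throughout: the contractivity $\|CGC\,\mathbf{x}\|_{A_n}\le\|\mathbf{x}\|_{A_n}$ and the orthogonality $(P_m^n)^{*} A_n\, CGC=0$.

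The heart of the proof is the estimate
$$\|CGC\,\mathbf{x}\|_{A_n}^{2} \le \beta\,\|CGC\,\mathbf{x}\|_{A_n^{2}}^{2}.$$
Setting $\mathbf{z}:=CGC\,\mathbf{x}$, the relation $(P_m^n)^{*} A_n\,\mathbf{z}=0$ gives $\langle A_n\mathbf{z},P_m^n\mathbf{y}\rangle_{2}=0$ for every $\mathbf{y}\in\mathbb{C}^{m}$, so that
$$\|\mathbf{z}\|_{A_n}^{2} = \langle A_n\mathbf{z},\mathbf{z}-P_m^n\mathbf{y}\rangle_{2} \le \|A_n\mathbf{z}\|_{2}\,\|\mathbf{z}-P_m^n\mathbf{y}\|_{2}$$
by Cauchy--Schwarz. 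Minimising over $\mathbf{y}$ and applying the approximation property to $\mathbf{z}$ yields $\|\mathbf{z}\|_{A_n}^{2}\le\sqrt{\beta}\,\|A_n\mathbf{z}\|_{2}\,\|\mathbf{z}\|_{A_n}$, from which the announced inequality follows after dividing by $\|\mathbf{z}\|_{A_n}$ and noting $\|A_n\mathbf{z}\|_{2}=\|\mathbf{z}\|_{A_n^{2}}$.

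Finally, I would apply the smoothing property to $\mathbf{z}=CGC\,\mathbf{x}$ and chain it with the previous bound:
$$\|V_{n,\mathrm{post}}\,\mathbf{z}\|_{A_n}^{2} \le \|\mathbf{z}\|_{A_n}^{2} - \alpha_{\mathrm{post}}\,\|\mathbf{z}\|_{A_n^{2}}^{2} \le \left(1-\tfrac{\alpha_{\mathrm{post}}}{\beta}\right)\|\mathbf{z}\|_{A_n}^{2},$$
and using $\|CGC\,\mathbf{x}\|_{A_n}\le\|\mathbf{x}\|_{A_n}$ conclude $\|TGM\|_{A_n}\le\sqrt{1-\alpha_{\mathrm{post}}/\beta}$. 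The strict inequality $\beta>\alpha_{\mathrm{post}}$ is then forced a posteriori, since a violation would render the right-hand side of the bound negative on a suitable test vector while the squared left-hand side is non-negative. The main obstacle is the middle step: since the approximation property controls a Euclidean-norm quantity and convergence is measured in the $A_n$-norm, one must find the correct way to trade norms, and the non-trivial observation is that the $A_n$-orthogonality $(P_m^n)^{*}A_n\,CGC=0$ is exactly what makes the insertion of an arbitrary $P_m^n\mathbf{y}$ legitimate and the Cauchy--Schwarz bridge effective.
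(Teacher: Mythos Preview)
The paper does not prove this theorem at all: it is stated in the preliminaries chapter with a citation to Ruge and St\"uben \cite{ruge1987algebraic} and is used as a black box, so there is no proof in the paper to compare against. Your argument is correct and is in fact the classical Ruge--St\"uben proof: identify $CGC$ as the $A_n$-orthogonal projector onto $(\mathrm{range}\,P_m^n)^{\perp_{A_n}}$, use the orthogonality $(P_m^n)^*A_n\,CGC=0$ together with Cauchy--Schwarz to convert the Euclidean approximation property into the energy-norm estimate $\|CGC\,\mathbf{x}\|_{A_n}^2\le\beta\,\|CGC\,\mathbf{x}\|_{A_n^2}^2$, and then feed this into the smoothing inequality. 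One small remark: your a~posteriori argument for the strict inequality $\beta>\alpha_{\mathrm{post}}$ only yields $\beta\ge\alpha_{\mathrm{post}}$ directly (non-negativity of the left-hand side); the strict version requires observing that $\mathrm{range}(CGC)$ is non-trivial since $m<n$ and that equality would force $V_{n,\mathrm{post}}$ to annihilate a non-zero vector, which one then rules out. Many references in fact state only $\beta\ge\alpha_{\mathrm{post}}$, so this is a minor point.
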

The fact that $\alpha_{\mathrm{post}}$ and $\beta$ are independent of $n$ means that, if the hypotheses $(i),(ii)$, are satisfied, then the TGM is not just convergent, but also optimal.
\subsubsection{V-cycle method}
By employing a recursive procedure, the TGM leads to a Multi-Grid Method (MGM): indeed, the standard V-cycle can be expressed in the following way:

\newpage
\[
\begin{tabular}{lll}
     \multicolumn{3}{c} {$\mathbf{x}_s^{\mathrm{out}}=\mathcal{MGM}(s,\mathbf{x}_s^{\mathrm{in}},\mathbf{b}_s)$} \\ 
     \hline \\
     \texttt{if}\quad $s\le s_{\min}$& \texttt{then} \\
     \ \\
     & \fbox{\begin{tabular}{l}$\mathrm{Solve\ }
     A_{s}\mathbf{x}_s^{\mathrm{out}}=\mathbf{b}_{s}$ \hskip 1.4cm \phantom{pA} \end{tabular}} & {Exact solution}\\
     \ \\
     \texttt{else} &  \\
     & \fbox{\begin{tabular}{l} $\mathbf{x}_s^{\mathrm{pre}}=\mathcal{V}_{s,\rm pre}^{\nu_{\rm pre}}
                             (\mathbf{x}_s^{\mathrm{in}},\mathbf{b}_s)\hskip 1.2cm \phantom{pA} $ \end{tabular}}  & {Pre-smoothing iterations}\\
                             \ \\
     &              \fbox{\begin{tabular}{l} $\mathbf{r}_{s}=\mathbf{b}_s-A_s \mathbf{x}_s^{\mathrm{pre}}$ \\
                                             $\mathbf{r}_{s+1}=(P^{m_{s+1}}_{m_s})^* \mathbf{r}_{s}$     \\
                                             $\mathbf{y}_{s+1}=\mathcal{MGM}(s+1,\mathbf{0}_{s+1},\mathbf{r}_{s+1})$  \\
                                             $\hat{\mathbf{x}}_s=\mathbf{x}_s^{\mathrm{pre}}+P^{m_{s+1}}_{m_s} \mathbf{y}_{s+1}$\\
     \end{tabular}}   & {Coarse Grid Correction}\\
     \ \\
     &  \fbox{\begin{tabular}{l}  $\mathbf{x}_s^{\mathrm{out}}=\mathcal{V}_{s,\rm post}^{\nu_{\rm post}}(\hat{\mathbf{x}}_s,\mathbf{b}_s) \hskip 1.2cm \phantom{pA} 
     $\end{tabular}}& {Post-smoothing iterations}\\
\end{tabular}
\]

From a computational viewpoint, it is more efficient that the matrices \\$A_{s+1}=P^{s+1}_s A_s
(P^{s+1}_s)^*$ are computed in the so-called
\emph{setup phase} for reducing the related costs.\par
According to the previous setting, the global iteration matrix of the MGM is recursively defined as

\[
\begin{array}{lcl}
MGM_{s_{\min}} &=& O \in \mathbb{C}^{s_{\min} \times s_{\min}}, \\
\\
MGM_s &=& V_{s,\mathrm{post}}^{\nu_\mathrm{post}}
   \left[
   {I^{(s)}}-P^{m_{s+1}}_{m_s}
            \left( {I^{(s+1)}}-MGM_{s+1} \right)A_{s+1}^{-1}
   (P^{m_{s+1}}_{m_s})^{*} A_s \right] V_{s,\mathrm{pre}}^{\nu_\mathrm{pre}}, \\
   \ \\
   && \hfill s=s_{\min}-1,\ldots , 0.\\
\end{array}
\]





\end{itemize}
\chapter{Spectral properties of matrix-sequences described by a function}
In this chapter, we begin by introducing the basic tools for spectral analysis of matrix-sequences, by presenting the notion
 of distribution both in the sense of the eigenvalues and singular values, clustering, essential range and spectral attraction and by providing useful characterizations of zero-distributed sequences. Then we will go through Toeplitz and Circulant matrices and their spectral characteristics both in the scalar and the block setting. Finally, we will explore the *-algebra of Generalized Locally Toeplitz matrix sequences.

\section{Asymptotic distribution of matrix-sequences}\label{ssez:matrix-seq}

\begin{Definition}[\textbf{Matrix-sequence distributed as complex-valued function}]\label{def-distribution realvalued}
Let $\{A_n\}_n$ be a sequence of matrices, with $A_n$ of size
$d_n$, with $d_k<d_{k+1}$ for every $k\in\mathbb{N}$ and let $f:D\subset\mathbb R^t\to\mathbb{C}$ be
a measurable function defined on a set $D$ with
$0<\mu_t(D)<\infty$.
\begin{itemize}
    \item We say that $\{A_n\}_n$ has a (asymptotic) singular value distribution described by $f$, and we write $\{A_n\}_n\sim_\sigma f$, if
    \begin{equation}\label{distribution:sv realvalued}
     \lim_{n\to\infty}\frac1{d_n}\sum_{i=1}^{d_n}F(\sigma_i(A_n))=\frac1{\mu_t(D)}\int_D F(|f(\mathbf x)|){\rm d}\mathbf x,\qquad\forall\,F\in C_c(\mathbb R).
    \end{equation}
    \item We say that $\{A_n\}_n$ has a (asymptotic) spectral (or eigenvalue) distribution described by $f$, and we write $\{A_n\}_n\sim_\lambda f$, if
    \begin{equation}\label{distribution:eig realvalued}
     \lim_{n\to\infty}\frac1{d_n}\sum_{i=1}^{d_n}F(\lambda_i(A_n))=\frac1{\mu_t(D)}\int_D F(f(\mathbf x)){\rm d}\mathbf x,\qquad\forall\,F\in C_c(\mathbb C).
    \end{equation}
\end{itemize}
If $\{A_n\}_n$ has both a singular value and an eigenvalue distribution described by $f$, then we write $\{A_n\}_n\sim_{\sigma,\lambda}f$.
\end{Definition}
Whenever we write a relation such as $\{A_n\}_n\sim_\lambda f$, it is understood that $f$ is as in Definition~\ref{def-distribution realvalued}; that is, $f$ is a measurable function defined on a subset $D$ of some $\mathbb R^t$ with
$0<\mu_t(D)<\infty$ and taking values in $\mathbb C$.

\begin{Definition}[\textbf{Matrix-sequence distributed as matrix-valued function}]\label{def-distribution}
Let $\{A_n\}_n$ be a sequence of matrices, with $A_n$ of size
$d_n$, and let \\$\mathbf{f}:D\subset\mathbb R^t\to\mathbb{C}^{r\times r}$ be
a measurable function defined on a set $D$ with\\
$0<\mu_t(D)<\infty$.
\begin{itemize}
    \item We say that $\{A_n\}_n$ has a (asymptotic) singular value distribution described by $\mathbf{f}$, and we write $\{A_n\}_n\sim_\sigma \mathbf{f}$, if
    \begin{equation}\label{distribution:sv-sv}
     \lim_{n\to\infty}\frac1{d_n}\sum_{i=1}^{d_n}F(\sigma_i(A_n))=\frac1{\mu_t(D)}\int_D\frac{\sum_{i=1}^{r}F(\sigma_i(\mathbf{f}(\mathbf x)))}{r}{\rm d}\mathbf x,\qquad\forall\,F\in C_c(\mathbb R).
    \end{equation}
    \item We say that $\{A_n\}_n$ has a (asymptotic) spectral (or eigenvalue) distribution described by $\mathbf{f}$, and we write $\{A_n\}_n\sim_\lambda \mathbf{f}$, if
    \begin{equation}\label{distribution:sv-eig}
     \lim_{n\to\infty}\frac1{d_n}\sum_{i=1}^{d_n}F(\lambda_i(A_n))=\frac1{\mu_t(D)}\int_D\frac{\sum_{i=1}^{r}F(\lambda_i(\mathbf{f}(\mathbf x)))}{r}{\rm d}\mathbf x,\qquad\forall\,F\in C_c(\mathbb C).
    \end{equation}
\end{itemize}
If $\{A_n\}_n$ has both a singular value and an eigenvalue distribution described by $\mathbf{f}$, we write $\{A_n\}_n\sim_{\sigma,\lambda}\mathbf{f}$.
\end{Definition}
We note that Definition~\ref{def-distribution} is well-posed because the functions
\[
\mathbf x\mapsto\sum_{i=1}^{r}F(\sigma_i(\mathbf f(\mathbf x)))
\textrm{\ \  and \  \ }
\mathbf x\mapsto\sum_{i=1}^{r}F(\lambda_i(\mathbf{f}(\mathbf x)))
\]
 are measurable.\\
  The informal meaning behind the spectral
distribution \eqref{distribution:sv-eig} is the following: if $\mathbf{f}$
is continuous, then a suitable ordering of the eigenvalues
$\{\lambda_j(A_n)\}_{j=1,\ldots,d_n}$, assigned in correspondence
with an equispaced grid on $D$, reconstructs approximately the $r$
surfaces  $\mathbf x$ $\mapsto\lambda_i(\mathbf{f}(\mathbf x)),\
i=1,\ldots,r$.\\For instance, if $t=1$, $d_n=nr$, and $D=[a,b]$,
then the eigenvalues of $A_n$ are approximately equal to
$\lambda_i(\mathbf{f}(a+j(b-a)/n))$, $j=1,\ldots,n,\ i=1,\ldots,r$; if $t=2$, $d_n=n^2 r$, and $D=[a_1,b_1]\times [a_2,b_2]$, then the
eigenvalues of $A_n$ are approximately equal to
$\lambda_i(\mathbf{f}(a_1+j_1(b_1-a_1)/n,a_2+j_2(b_2-a_2)/n)),\
j_1,j_2=1,\ldots,n,\ i=1,\ldots,r$ (and so on for $t\ge3$). This
type of information is useful in engineering applications
\cite{tom-paper}, e.g. for the computation of the relevant
vibrations, and in the analysis of the (asymptotic) convergence
speed of iterative solvers for large linear systems or for
improving the convergence rate by e.g. the design of appropriate
preconditioners \cite{BK,BeSe}.\par
The next theorem gives useful tools for computing the spectral
distribution of sequences formed by Hermitian matrices. For the
related proof, we refer the reader to
\cite[Theorem~4.3]{opiccolo}.
\begin{Theorem}\label{extradimensional}
Let $\{A_n\}_n$ be a sequence of matrices, with $A_n$ Hermitian of size $d_n$, and let $\{P_n\}_n$
be a sequence such that $P_n\in\mathbb C^{d_n\times\delta_n}$, $P_n^*P_n=I_{\delta_n}$, $\delta_n\le d_n$
and $\delta_n/d_n\to1$ as $n\to\infty$. Then, $\{A_n\}_n\sim_{\sigma,\lambda}\mathbf{f}$ if and only if $\{P_n^*A_nP_n\}_n\sim_{\sigma,\lambda}\mathbf{f}$.
\end{Theorem}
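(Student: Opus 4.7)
The plan is to establish something stronger than the stated equivalence: for every $F \in C_c(\mathbb{R})$,
\[
\lim_{n\to\infty}\left[\frac{1}{\delta_n}\sum_{i=1}^{\delta_n}F(\lambda_i(B_n)) \;-\; \frac{1}{d_n}\sum_{i=1}^{d_n}F(\lambda_i(A_n))\right] \;=\; 0, \qquad B_n := P_n^*A_nP_n,
\]
i.e.\ the empirical spectral distributions of $\{A_n\}_n$ and $\{B_n\}_n$ are asymptotically indistinguishable. The equivalence in the theorem then follows at once, because the right-hand sides of the distribution relations in Definition~\ref{def-distribution} depend only on $\mathbf{f}$ and not on the matrices. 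For $F \in C_c(\mathbb{C})$ appearing in \eqref{distribution:sv-eig}, the claim reduces to the real case since $A_n$ and $B_n$ are Hermitian with real eigenvalues. For the singular value statement, $B_n$ is Hermitian as well ($B_n^*=P_n^*A_n^*P_n=B_n$), so $\sigma_i=|\lambda_i|$ for both sequences, and the $\sigma$-case follows from the $\lambda$-case applied to the test function $x \mapsto F(|x|) \in C_c(\mathbb{R})$.

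The core tool is the Cauchy (Poincar\'e) interlacing theorem: since $P_n^* P_n = I_{\delta_n}$, $B_n$ is the compression of $A_n$ to the $\delta_n$-dimensional range of $P_n$, and ordering eigenvalues non-decreasingly yields
\[
\lambda_i(A_n) \;\le\; \lambda_i(B_n) \;\le\; \lambda_{i+d_n-\delta_n}(A_n), \qquad i=1,\ldots,\delta_n.
\]
For any bounded non-decreasing continuous $F$, monotonicity combined with these inequalities sandwiches
\[
\frac{1}{\delta_n}\sum_{i=1}^{\delta_n}F(\lambda_i(A_n)) \;\le\; \frac{1}{\delta_n}\sum_{i=1}^{\delta_n}F(\lambda_i(B_n)) \;\le\; \frac{1}{\delta_n}\sum_{i=d_n-\delta_n+1}^{d_n}F(\lambda_i(A_n)),
\]
and a direct estimate using $\|F\|_\infty \le M$ together with $(d_n-\delta_n)/\delta_n \to 0$ shows that both bounds differ from $\frac{1}{d_n}\sum_{i=1}^{d_n}F(\lambda_i(A_n))$ by a quantity of order $O((d_n-\delta_n)/\delta_n) \to 0$. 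This establishes the desired cancellation for every bounded monotone continuous $F$.

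To extend to arbitrary $F \in C_c(\mathbb{R})$, I approximate uniformly by a piecewise-linear continuous compactly supported $G$ with $\|F-G\|_\infty < \varepsilon$; such a $G$ has bounded total variation, so by Jordan decomposition $G = G_1 - G_2$ with $G_j$ bounded, continuous, and non-decreasing. Applying the monotone case to $G_1$ and $G_2$ separately and using the elementary bound $\bigl|\tfrac{1}{N}\sum (F-G)(\lambda_i)\bigr| \le \|F-G\|_\infty < \varepsilon$ for both $N = d_n$ and $N = \delta_n$, the asymptotic equality holds for $F$ up to $2\varepsilon$; letting $\varepsilon \to 0$ concludes. The main---and rather minor---technical obstacle is that the monotone pieces $G_1, G_2$ need not themselves be compactly supported, but the estimate in the monotone case uses only their uniform boundedness, so no difficulty arises.
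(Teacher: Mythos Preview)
Your argument is correct. The paper does not actually give a proof of this theorem; it simply refers the reader to \cite[Theorem~4.3]{opiccolo}, so there is no in-paper proof to compare against. Your approach---reducing to the Cauchy--Poincar\'e interlacing theorem for the compression $P_n^*A_nP_n$ (which is unitarily equivalent to a principal submatrix of $A_n$ once $P_n$ is completed to a unitary matrix), handling monotone bounded test functions directly, and then passing to general $F\in C_c(\mathbb{R})$ via a Jordan decomposition of a piecewise-linear approximant---is self-contained and standard, and each step is sound. The reduction of the $\sigma$- and complex-$F$ cases to the real-$\lambda$ case via Hermitianity is also correct.
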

  \begin{Definition}[\textbf{Clustering of a matrix sequence} ]\label{def-cluster}

Given $\{A_n\}_n$  a sequence of matrices of increasing size $d_n$ and let $S\subseteq\mathbb C$ be a non-empty closed subset of $\mathbb C$.
 \begin{itemize}
\item $\{A_n\}_n$ is {\em strongly clustered} at $S$
in the sense of the eigenvalues if, every $\epsilon>0$ there exists a constant $q_\epsilon(n,S)$ independent from $n$ such that the number of eigenvalues of $A_n$ outside $B(S,\epsilon)$
is bounded by $q_\epsilon$ . In symbols,
$$q_\epsilon(n,S):=\#\{j\in\{1,\ldots,d_n\}: \lambda_j(A_n)\notin B(S,\epsilon)\}=O(1),\quad\mbox{as $n\to\infty$.}$$
\item  $\{A_n\}_n$ is {\em weakly clustered} at $S$ if, for all $\epsilon>0$,
$$q_\epsilon(n,S)=o(d_n), \quad \mbox{as $n\to\infty.$}$$
\end{itemize}
If $\{A_n\}_n$ is strongly or weakly clustered at $S$ and $S$ is not connected, then the connected components of $S$ are called sub-clusters.
  \end{Definition}

\begin{Definition}[\textbf{Essential range of complex-valued function}]
Let \\$f:D\subseteq\mathbb R^t\to\mathbb C$ be a measurable function, with $D$ a measurable set of finite measure.
The set $\displaystyle \mathcal{ER}(f):=\{z\in\mathbb C:\,\mu_t(\{f\in B(z,\epsilon)\})>0\mbox{ for all $ \epsilon>0$}\},$ is {\em the essential range of $f$}  where $\{f\in B(z,\epsilon)\}:=\{x\in D:\,f(x)\in B(z,\epsilon)\} $ .
\end{Definition}
$\mathcal{ER}(f)$ is always closed; moreover, if $f$ is continuous
and $D$ is contained in the closure of its interior, then
$\mathcal{ER}(f)$ coincides with the closure of the image of $f$.

  \begin{Definition}[\textbf{Essential range of matrix-valued function}]
  Let \\$\mathbf{f}:D\subseteq\mathbb R^t\to\mathbb C^{r\times r}$ be a measurable function, with $D$ a measurable set of finite measure, the essential range of $\mathbf{f}$ is defined as the union of the essential ranges of the eigenvalue functions $\lambda_i(\mathbf{f}),\ i=1,\ldots, r$: $\mathcal{ER}(\mathbf{f}):=\bigcup_{i=1}^r\mathcal{ER}(\lambda_i(\mathbf{f}))$.
  \end{Definition}
  \begin{Theorem}
  If $\{A_n\}_n\sim_\lambda \mathbf{f}$ (with $\{A_n\}_n,\  \mathbf{f}$ as in Definition \ref{def-distribution}), then,
by \cite[Theorem 4.2]{gol-serra}, $\{A_n\}_n$ is weakly clustered at the essential range of $\mathbf{f}$, defined as the union of the essential ranges of the eigenvalue functions $\lambda_i(\mathbf{f}),\ i=1,\ldots, r$: $\mathcal{ER}(\mathbf{f}):=\bigcup_{i=1}^s\mathcal{ER}(\lambda_i(\mathbf{f}))$.
  \end{Theorem}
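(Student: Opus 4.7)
The plan is to set $S:=\mathcal{ER}(\mathbf{f})$ and show, for each fixed $\epsilon>0$, that $q_\epsilon(n,S)=\#\{j:\lambda_j(A_n)\notin B(S,\epsilon)\}=o(d_n)$. The starting observation is that by the very definition of essential range, for every $i=1,\ldots,r$ one has $\lambda_i(\mathbf{f}(\mathbf x))\in\mathcal{ER}(\lambda_i(\mathbf{f}))\subseteq S$ for almost every $\mathbf x\in D$, so any test function $F\in C_c(\mathbb C)$ which vanishes on $S$ will make the right-hand side of \eqref{distribution:sv-eig} equal to zero.

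First I would take care of eigenvalues lying in a bounded shell. Fix $M>0$ and use Urysohn's lemma to produce $F_M\in C_c(\mathbb C)$ with $0\le F_M\le 1$, $F_M\equiv 1$ on the compact set $\overline{B(0,M)}\setminus B(S,\epsilon)$, and $F_M\equiv 0$ on $B(S,\epsilon/2)\cup(\mathbb C\setminus B(0,M+1))$. Applying $\{A_n\}_n\sim_\lambda\mathbf{f}$ with $F=F_M$ gives, by the vanishing of the right-hand side,
\[
\frac1{d_n}\#\{j:\lambda_j(A_n)\in\overline{B(0,M)}\setminus B(S,\epsilon)\}\;\le\;\frac1{d_n}\sum_{j=1}^{d_n}F_M(\lambda_j(A_n))\;\longrightarrow\;0.
\]

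Second, I would control the mass at infinity. Choose $G_M\in C_c(\mathbb C)$ with $0\le G_M\le 1$, $G_M\equiv 1$ on $\overline{B(0,M)}$, and $\mathrm{supp}(G_M)\subset B(0,2M)$. Since each $\lambda_i(\mathbf{f}(\mathbf x))$ is a finite complex number for a.e.\ $\mathbf x$, one has $G_M(\lambda_i(\mathbf{f}(\mathbf x)))\to 1$ a.e.\ as $M\to\infty$, and bounded convergence together with \eqref{distribution:sv-eig} yields
\[
\lim_{M\to\infty}\lim_{n\to\infty}\frac1{d_n}\sum_{j=1}^{d_n}G_M(\lambda_j(A_n))=1.
\]
Given $\delta>0$, I can therefore fix $M$ large and then $n$ large so that $\#\{j:\lambda_j(A_n)\in B(0,2M)\}\ge(1-\delta)d_n$, i.e.\ $\#\{j:\lambda_j(A_n)\notin B(0,2M)\}\le\delta d_n$.

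Combining the two bounds with $2M$ in place of $M$ in Step~1,
\[
q_\epsilon(n,S)\le\#\{j:\lambda_j(A_n)\in\overline{B(0,2M)}\setminus B(S,\epsilon)\}+\#\{j:\lambda_j(A_n)\notin B(0,2M)\}\le o(d_n)+\delta d_n,
\]
and letting first $n\to\infty$ and then $\delta\to 0$ finishes the proof. The main obstacle, and the reason a bare one-line argument does not work, is precisely this noncompactness issue: test functions in $C_c(\mathbb C)$ cannot simultaneously be bounded away from zero outside $B(S,\epsilon)$ and have compact support, so one has to extract separately the bulk of the eigenvalues in a large ball (where the essential range argument applies directly) and control the exceptional tail by exhausting $\mathbb C$ with a second family of cutoffs. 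Everything else — measurability of $\mathbf x\mapsto\sum_i F(\lambda_i(\mathbf{f}(\mathbf x)))$, existence of $F_M,G_M$ — is standard and is either recalled in Definition~\ref{def-distribution} or immediate from Urysohn's lemma.
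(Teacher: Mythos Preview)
Your proof is correct. The paper does not actually prove this theorem: it merely records the statement and cites \cite[Theorem~4.2]{gol-serra} for the proof, so there is no in-paper argument to compare against.

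What you have written is essentially the standard direct proof of the cited result. The two-step structure (a Urysohn cutoff $F_M$ supported in a large ball to handle the bounded shell $\overline{B(0,M)}\setminus B(S,\epsilon)$, followed by an exhaustion cutoff $G_M$ to show that the eigenvalue mass outside $B(0,2M)$ is asymptotically negligible) is exactly the right way to resolve the non-compactness issue you identify, and the key input---that $\lambda_i(\mathbf f(\mathbf x))\in\mathcal{ER}(\lambda_i(\mathbf f))\subseteq S$ for a.e.\ $\mathbf x$---is a standard consequence of second countability of $\mathbb C$ and the definition of the essential range. One cosmetic remark: in the construction of $F_M$ it suffices to require $F_M\equiv 0$ on $S$ (rather than on all of $B(S,\epsilon/2)$), since the only use of that vanishing is to kill the right-hand side of \eqref{distribution:sv-eig}; your stronger requirement is still attainable by Urysohn because $\overline{B(S,\epsilon/2)}\subset B(S,\epsilon)$ is disjoint from $\overline{B(0,M)}\setminus B(S,\epsilon)$.
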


  \begin{Definition}[\textbf{Spectral attraction}]
  Let $\{A_n\}_n$ be a sequence of matrices of size $d_n$ tending to infinity, let $z\in\mathbb{C}$ and let $\lambda_1,\lambda_2,\ldots,\lambda_{d_n}$ the eigenvalues of $A_n$ ordered  according to their distance from $z$, i.e. $$|\lambda_1-z|\leq|\lambda_2-z|\leq\ldots\leq|\lambda_{d_n}-z|,$$
  We say that {\em $z$ strongly attracts the spectrum $\Lambda(A_n)$}  with infinite order if, for each fixed $k$, $$\lim_{n\longrightarrow\infty}|\lambda_k-z|=0.$$

  \end{Definition}

\begin{Theorem} \cite[Theorem 4.2]{gol-serra}
  If $\{A_n\}_n\sim_\lambda \mathbf{f}$ (with $\{A_n\}_n,\  \mathbf{f}$ as in Definition \ref{def-distribution}), then each point $z\in\mathcal{ER}(\mathbf{f})$ strongly attracts $\Lambda(A_n)$ with infinite order.
\end{Theorem}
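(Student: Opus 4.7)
The plan is to argue by contradiction using the eigenvalue distribution identity (\ref{distribution:sv-eig}) with a carefully chosen test function. Fix $z \in \mathcal{ER}(\mathbf{f})$ and suppose that $z$ does \emph{not} strongly attract $\Lambda(A_n)$ with infinite order. Then there exist an index $k_0 \in \mathbb{N}$, a radius $\delta > 0$, and an infinite subsequence $\{n_j\}_j$ such that $|\lambda_{k_0}(A_{n_j}) - z| \geq \delta$, where the eigenvalues are ordered by increasing distance from $z$. By this ordering, $|\lambda_k(A_{n_j}) - z| \geq \delta$ for every $k \geq k_0$, so at most $k_0 - 1$ eigenvalues of $A_{n_j}$ lie in the open ball $B(z, \delta)$.

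Next I would construct a bump function $F \in C_c(\mathbb{C})$ with $0 \leq F \leq 1$, $F \equiv 1$ on $B(z, \delta/2)$, and $\operatorname{supp}(F) \subseteq B(z, \delta)$ (e.g.\ a routine mollification of a piecewise affine radial profile). Since $F$ vanishes outside $B(z, \delta)$, the left-hand side of (\ref{distribution:sv-eig}) along the subsequence satisfies
\[
\frac{1}{d_{n_j}}\sum_{i=1}^{d_{n_j}} F(\lambda_i(A_{n_j})) \;\leq\; \frac{k_0 - 1}{d_{n_j}} \;\longrightarrow\; 0
\]
as $j \to \infty$, because $d_n \to \infty$.

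For the right-hand side, the hypothesis $z \in \mathcal{ER}(\mathbf{f}) = \bigcup_{i=1}^{r} \mathcal{ER}(\lambda_i(\mathbf{f}))$ furnishes some index $i^{\star}$ with $z \in \mathcal{ER}(\lambda_{i^{\star}}(\mathbf{f}))$. Setting $E := \{x \in D : \lambda_{i^{\star}}(\mathbf{f}(x)) \in B(z, \delta/2)\}$, the definition of essential range gives $\mu_t(E) > 0$, and since $F \equiv 1$ on $B(z, \delta/2)$ we obtain
\[
\frac{1}{\mu_t(D)} \int_{D} \frac{\sum_{i=1}^{r} F(\lambda_i(\mathbf{f}(\mathbf x)))}{r}\, \mathrm{d}\mathbf x \;\geq\; \frac{\mu_t(E)}{r\,\mu_t(D)} \;>\; 0.
\]
Because $\{A_n\}_n \sim_\lambda \mathbf{f}$ forces the limit of the left-hand side to equal this strictly positive number, the subsequential limit $0$ along $\{n_j\}_j$ produces a contradiction, completing the argument.

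The overall strategy is mechanical once the distribution hypothesis is in hand; the only genuinely delicate point, rather than a serious obstacle, is the measurability of the eigenvalue selections $\lambda_i(\mathbf{f}(\cdot))$ needed to make both $E$ and the integral well defined. This is automatic under the measurability of $\mathbf{f}$ assumed in Definition~\ref{def-distribution}, and explains why the integrand was already observed to be measurable in the remark following that definition; beyond this the construction of $F$ and the comparison of the two sides are entirely routine.
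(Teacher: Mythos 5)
Your argument is correct. The key steps — negating infinite-order attraction to obtain a subsequence with at most $k_0-1$ eigenvalues in $B(z,\delta)$, constructing a bump $F\in C_c(\mathbb C)$ concentrated there, bounding the averaged eigenvalue sum by $(k_0-1)/d_{n_j}\to 0$, and then lower-bounding the right-hand side of the distribution identity using the positive-measure preimage $E$ guaranteed by $z\in\mathcal{ER}(\lambda_{i^\star}(\mathbf f))$ — all fit together without gaps, and the measurability of $\lambda_{i^\star}(\mathbf f(\cdot))$ is indeed already presupposed by the definition of $\mathcal{ER}$. Note that the paper itself gives no proof here; it simply cites the external reference, so there is nothing in the source to compare against. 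Your contradiction-via-test-function strategy is the standard way such attraction results are derived from a Weyl-type distribution hypothesis and is almost certainly what the cited reference does.
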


Now, we provide a classical theorem that is the Gershgorin's {theorem}; see \cite{bhatia1997matrix} which provide the spectral localization of a matrix.
  \begin{Theorem}
Let $A\in\mathbb{C}^{n\times n}$, we define the Gershgorin disk as
$$B(a_{jj},z_j)=\{z\in\mathbb{C}:|a_{jj}-z|\leq z_j\}\hbox{, where }z_j=\sum_{k\neq j}|a_{jk}|.$$
Then $$\displaystyle\Lambda(A)\subset \cup_{j=1}^{n}B(a_{jj},z_j).$$
\end{Theorem}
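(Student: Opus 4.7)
The plan is to use the standard eigenvector-component argument: pick an arbitrary eigenvalue $\lambda\in\Lambda(A)$ with eigenvector $\mathbf{x}\neq\mathbf{0}$, and show that $\lambda$ lies in the Gershgorin disk $B(a_{ii},z_i)$ associated with the index $i$ where $|x_i|$ is maximal. I would first select $i\in\{1,\ldots,n\}$ with $|x_i|=\max_{k}|x_k|>0$; since $\mathbf{x}\neq\mathbf{0}$, we have $|x_i|>0$, which is the only nontrivial observation needed to avoid dividing by zero later.

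Next, I would read off the $i$-th scalar equation of $A\mathbf{x}=\lambda\mathbf{x}$, namely $\sum_{k=1}^{n}a_{ik}x_k=\lambda x_i$, and isolate the diagonal term to obtain $(\lambda-a_{ii})x_i=\sum_{k\neq i}a_{ik}x_k$. Then taking moduli and applying the triangle inequality gives $|\lambda-a_{ii}|\,|x_i|\leq\sum_{k\neq i}|a_{ik}|\,|x_k|\leq|x_i|\sum_{k\neq i}|a_{ik}|=|x_i|\,z_i$, by the maximality of $|x_i|$.

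Dividing both sides by $|x_i|>0$ yields $|\lambda-a_{ii}|\leq z_i$, i.e.\ $\lambda\in B(a_{ii},z_i)\subseteq\cup_{j=1}^{n}B(a_{jj},z_j)$. Since $\lambda$ was arbitrary in $\Lambda(A)$, the inclusion $\Lambda(A)\subset\cup_{j=1}^{n}B(a_{jj},z_j)$ follows.

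There is essentially no obstacle: the only subtle point is the necessity of choosing the index where the eigenvector is largest in modulus, which is precisely what makes the estimate $|x_k|\leq|x_i|$ available so that the off-diagonal sum $\sum_{k\neq i}|a_{ik}|\,|x_k|$ can be bounded by $|x_i|\,z_i$. Everything else is a direct manipulation of the eigenvalue equation and the triangle inequality.
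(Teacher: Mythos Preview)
Your proof is correct and is exactly the classical eigenvector-component argument for Gershgorin's theorem. The paper does not actually supply its own proof of this statement; it simply states the result and refers the reader to \cite{bhatia1997matrix}, so there is nothing to compare against beyond noting that your argument is the standard textbook one.
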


  The next result is known as {Cauchy's} interlacing theorem for Hermitian matrices, which is useful to get information about the eigenvalues of a Hermitian matrix if we know those of any principal submatrix. We refer to \cite[Corollary III.1.5]{bhatia1997matrix} and \cite{hwang2004cauchy} for the proof.

  \begin{Theorem}
  Let $A\in\mathbb{C}^{n\times n}$ be a Hermitian matrix and let $B\in\mathbb{C}^{m\times m}$ its principal submatrix. Let $\lambda_1\leq\lambda_2\leq\ldots\leq\lambda_n$ and $\mu_1\leq\mu_2\leq\ldots\leq\mu_{m}$ be the eigenvalues of $A$ and $B$ respectively arranged in non-decreasing order, then $$\lambda_k\leq\mu_k\leq\lambda_{k+n-m},\mbox{ for }k=1,\ldots,m.$$
  Morever, if $m=n-1$,
  $$\lambda_1\leq\mu_1\leq\lambda_2\leq\mu_2\leq\ldots\leq\mu_{n-1}\leq\lambda_n.$$
 \end{Theorem}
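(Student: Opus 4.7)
The plan is to derive both inequalities from the Courant--Fischer min-max characterization of the eigenvalues of a Hermitian matrix. First I would encode the hypothesis ``$B$ is a principal submatrix of $A$'' algebraically: there exists $P\in\mathbb{C}^{n\times m}$ whose columns are the columns of $I_n$ indexed by the rows and columns retained in $B$, so that $B=P^{*}AP$ and $P^{*}P=I_m$. In particular, for every $w\in\mathbb{C}^m$ one has $(Pw)^{*}(Pw)=w^{*}w$ and $w^{*}Bw=(Pw)^{*}A(Pw)$, so the Rayleigh quotients of $B$ on $\mathbb{C}^m$ are exactly the Rayleigh quotients of $A$ restricted to the $m$-dimensional subspace $\mathrm{Range}(P)\subseteq\mathbb{C}^n$.

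For the lower estimate $\lambda_k\le\mu_k$, I would apply the ``min-max'' form of Courant--Fischer:
$$
\mu_k=\min_{\substack{W\subseteq\mathbb{C}^m\\ \dim W=k}}\max_{0\neq w\in W}\frac{w^{*}Bw}{w^{*}w}=\min_{\substack{W\subseteq\mathbb{C}^m\\ \dim W=k}}\max_{0\neq w\in W}\frac{(Pw)^{*}A(Pw)}{(Pw)^{*}(Pw)}.
$$
As $W$ varies over $k$-dimensional subspaces of $\mathbb{C}^m$, the image $PW$ varies over $k$-dimensional subspaces of $\mathrm{Range}(P)$, i.e.\ a strict subfamily of the $k$-dimensional subspaces of $\mathbb{C}^n$. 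Minimizing over this smaller family gives a value no smaller than the minimum over all $k$-dimensional subspaces of $\mathbb{C}^n$, which by Courant--Fischer equals $\lambda_k$.

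For the upper estimate $\mu_k\le\lambda_{k+n-m}$, I would use the ``max-min'' dual form:
$$
\mu_k=\max_{\substack{W\subseteq\mathbb{C}^m\\ \dim W=m-k+1}}\min_{0\neq w\in W}\frac{w^{*}Bw}{w^{*}w},
\qquad
\lambda_{k+n-m}=\max_{\substack{V\subseteq\mathbb{C}^n\\ \dim V=m-k+1}}\min_{0\neq x\in V}\frac{x^{*}Ax}{x^{*}x},
$$
the dimensional count $n-(k+n-m)+1=m-k+1$ being the key arithmetic that lines up the two quantities. Identifying each admissible $W$ with the $(m-k+1)$-dimensional subspace $PW\subseteq\mathbb{C}^n$ again exhibits $\mu_k$ as a maximum over a subfamily of the subspaces that appear in the max-min formula for $\lambda_{k+n-m}$, so $\mu_k\le\lambda_{k+n-m}$.

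Finally, the case $m=n-1$ is an immediate specialization: the general inequality becomes $\lambda_k\le\mu_k\le\lambda_{k+1}$ for $k=1,\dots,n-1$, which chains into the full interlacing statement. The only potentially delicate step is keeping the two forms of Courant--Fischer and the dimension arithmetic $(n-(k+n-m)+1=m-k+1)$ straight; once those are in place the argument is a direct comparison of a constrained extremum with an unconstrained one, and no further estimates are needed.
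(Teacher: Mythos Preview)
Your argument via the Courant--Fischer min--max principle is correct and is the standard route to Cauchy's interlacing theorem; the encoding $B=P^{*}AP$ with $P^{*}P=I_m$, the comparison of extrema over subfamilies of subspaces, and the dimension count $n-(k+n-m)+1=m-k+1$ are all fine. Note, however, that the paper does not actually give a proof of this result: it simply states it and refers to \cite[Corollary III.1.5]{bhatia1997matrix} and \cite{hwang2004cauchy}, so there is no in-paper argument to compare against (Bhatia's proof is essentially the Courant--Fischer argument you wrote).
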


To end this section, we give a useful result for computing the spectral distribution of a sequence of perturbed Hermitian matrices of the form $\{A_n+B_n\}_n$, for more details, see \cite[Theorem 3.3]{garoni2015tools}.

 \begin{Theorem}
 Let $\{A_n\},\{B_n\}$ be a sequence of matrices of increasing size $d_n$, if we assume that the following conditions are satisfied.
 \begin{itemize}
 \item Each $A_n$ is Hermitian and $\{A_n\}\sim_{\lambda}\mathbf{f}$, where $\mathbf{f}:D\subseteq\mathbb R^t\to\mathbb C^{r\times r}$.
 \item There exists a constant $C>0$, for all $n$, $\|A_n\|,\|B_n\|\leq C$.
 \item $\||B_n\||_{1}=o(d_n)$.

 \end{itemize}
 Then $\{C_n\}\sim_{\lambda}\mathbf{f}$ with $C_n=A_n+B_n$, and   $\displaystyle\lambda_{i}(\mathbf{f})\in\overline{\cup_n\Lambda(A_n)}\subseteq[-C,C]$ almost everywhere, for $i=1,\ldots,r$.
 \end{Theorem}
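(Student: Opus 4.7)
The plan is to verify the defining relation \eqref{distribution:sv-eig} for the sequence $\{C_n\}_n$ and then to read off the inclusion statement from the weak clustering result already recorded in the chapter. Fix an arbitrary $F\in C_c(\mathbb{C})$. Since $\{A_n\}_n\sim_\lambda\mathbf{f}$ by hypothesis, the right-hand side of \eqref{distribution:sv-eig} coincides with $\lim_{n\to\infty}\frac{1}{d_n}\sum_{i=1}^{d_n}F(\lambda_i(A_n))$, so it is enough to prove
\[
\frac{1}{d_n}\left|\sum_{i=1}^{d_n}F(\lambda_i(C_n))-\sum_{i=1}^{d_n}F(\lambda_i(A_n))\right|\longrightarrow 0.
\]
The assumption $\|A_n\|,\|B_n\|\le C$ places $\Lambda(A_n)$ in $[-C,C]\subset\mathbb{R}$ (by Hermiticity) and $\Lambda(C_n)$ in the closed disk $\overline{B(0,2C)}$, so $F$ may be replaced, without affecting the limit, by any uniformly close Lipschitz modification on $\overline{B(0,2C)}$.

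Next I would carry out a reduction to the Hermitian case by splitting $C_n=H_n+\widetilde{B}_n$ with $H_n=(C_n+C_n^*)/2$ Hermitian and $\widetilde{B}_n=(B_n-B_n^*)/2$ skew-Hermitian. The identity $A_n^*=A_n$ together with the unitary invariance of the Schatten $1$-norm shows that both $\||H_n-A_n\||_1$ and $\||\widetilde{B}_n\||_1$ are dominated by $\||B_n\||_1=o(d_n)$, while $\|H_n\|,\|\widetilde{B}_n\|\le 2C$ remain uniformly bounded. For the pair of Hermitian sequences $\{A_n\}_n$ and $\{H_n\}_n$, a Wielandt-Hoffman type inequality combined with the Lipschitz approximation of $F$ on $[-2C,2C]$ yields
\[
\left|\sum_{i=1}^{d_n}F(\lambda_i(H_n))-\sum_{i=1}^{d_n}F(\lambda_i(A_n))\right|\le \mathrm{Lip}(F)\cdot\||H_n-A_n\||_1 = o(d_n),
\]
which gives $\{H_n\}_n\sim_\lambda\mathbf{f}$. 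To pass from $H_n$ to $C_n$, I would invoke a Lidskii-type estimate for the possibly non-normal residual: the trace-norm smallness of $\widetilde{B}_n$, together with the uniform spectral-norm bound, forces the averaged displacement of the eigenvalues of $C_n$ with respect to those of $H_n$ to be negligible in the sense required by \eqref{distribution:sv-eig}. This is exactly the content of \cite[Theorem 3.3]{garoni2015tools}, of which the present statement is a special case, and chaining the two reductions produces $\{C_n\}_n\sim_\lambda\mathbf{f}$.

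For the second assertion, the weak clustering theorem already recorded above guarantees that $\{A_n\}_n$ is weakly clustered at the essential range $\mathcal{ER}(\mathbf{f})=\bigcup_{i=1}^{r}\mathcal{ER}(\lambda_i(\mathbf{f}))$, whence $\mathcal{ER}(\mathbf{f})\subseteq\overline{\bigcup_n\Lambda(A_n)}$. Since every $A_n$ is Hermitian with $\|A_n\|\le C$, one has $\Lambda(A_n)\subseteq[-C,C]$ and therefore $\overline{\bigcup_n\Lambda(A_n)}\subseteq[-C,C]$. Because $\lambda_i(\mathbf{f}(\mathbf{x}))\in\mathcal{ER}(\lambda_i(\mathbf{f}))\subseteq\mathcal{ER}(\mathbf{f})$ for almost every $\mathbf{x}\in D$ and every $i=1,\ldots,r$, the required chain of inclusions follows.

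The delicate point will be the non-normal step $H_n\leadsto C_n$: Hoffman-Wielandt is razor-sharp between Hermitian matrices, but for a generic $C_n$ the eigenvalues can in principle scatter under arbitrarily small perturbations. The control is nevertheless recovered because the deviation is measured in trace-norm and the test function $F$ is continuous with compact support, so only averaged spectral information is probed. Making this precise is where the technical weight sits, and it is exactly what the general perturbation lemma cited above is designed to deliver.
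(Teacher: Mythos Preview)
The paper does not prove this statement at all: it is quoted verbatim as \cite[Theorem 3.3]{garoni2015tools} with the phrase ``for more details, see \cite[Theorem 3.3]{garoni2015tools}'' and no argument is given. Thus there is no paper proof to compare against; your sketch is essentially an attempt to reproduce the argument from that external reference, which you also cite.

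Two remarks on your sketch itself. First, in the step $H_n\leadsto C_n$ you write ``This is exactly the content of \cite[Theorem 3.3]{garoni2015tools}, of which the present statement is a special case'': that is the very theorem under discussion, so invoking it here is circular. What you need at that point is a separate perturbation lemma for non-Hermitian trace-class perturbations of Hermitian matrices (of Mirsky/Lidskii type together with the uniform spectral bound), not the theorem statement. Second, for the inclusion $\mathcal{ER}(\mathbf{f})\subseteq\overline{\bigcup_n\Lambda(A_n)}$ you appeal to weak clustering, but weak clustering only says that most eigenvalues of $A_n$ lie near $\mathcal{ER}(\mathbf{f})$; it does not by itself force every point of $\mathcal{ER}(\mathbf{f})$ to be approximated by eigenvalues. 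The correct tool from the chapter is the spectral attraction theorem (each $z\in\mathcal{ER}(\mathbf{f})$ strongly attracts $\Lambda(A_n)$ with infinite order), which immediately gives $z\in\overline{\bigcup_n\Lambda(A_n)}$. With that substitution the second assertion goes through as you wrote it.
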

 \noindent\section{Zero-Distributed matrix-sequences}
A sequence of matrices $\{Z_n\}_n$ such that $\{Z_n\}_n\sim_\sigma0$ is referred to as a zero-distributed sequence.
Note that, for any $r\ge1$, $\{Z_n\}_n\sim_\sigma0$ is equivalent to $\{Z_n\}_n\sim_\sigma O_r$. Proposition~\ref{0cs} provides an important
characterization of zero-distributed sequences, together with a useful sufficient condition for detecting such sequences. Throughout this paper we use the natural convention $1/\infty=0$.

\begin{Proposition}\label{0cs}
Let $\{Z_n\}_n$ be a sequence of matrices, with $Z_n$ of size $d_n$ with $d_k<d_{k+1}$, $\forall k\in\mathbb{N}$. Then
\begin{itemize}
   \item  $\{Z_n\}_n$ is zero-distributed if and only if $Z_n=R_n+N_n$ with ${\rm rank}(R_n)/d_n\to0$ and $\|N_n\|_{2}\to0$ as $n\to\infty$.
    \item
     $\{Z_n\}_n$ is zero-distributed if there exists $p\in[1,\infty]$ such that \\$\|Z_n\|_p/(d_n)^{1/p}\to0$ as $n\to\infty$.
\end{itemize}
\end{Proposition}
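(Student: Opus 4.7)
For the equivalence in the first bullet, I start with the easier direction $(\Leftarrow)$. Assume $Z_n = R_n + N_n$ with $r_n := \mathrm{rank}(R_n) = o(d_n)$ and $\|N_n\|_2 \to 0$. By the standard $1$-Lipschitz perturbation bound for singular values with respect to the spectral norm, $|\sigma_i(Z_n) - \sigma_i(R_n)| \leq \|N_n\|_2$ for every $i$. Fixing $F \in C_c(\mathbb{R})$, I split the index set into the at most $r_n$ indices where $\sigma_i(R_n)$ may be nonzero and the remaining indices, on which $\sigma_i(R_n) = 0$ and therefore $\sigma_i(Z_n) \leq \|N_n\|_2$. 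The first block contributes at most $2 r_n \|F\|_\infty / d_n$ to $\frac{1}{d_n}\sum_i F(\sigma_i(Z_n))$, while on the second block each summand approximates $F(0)$ up to the modulus of continuity $\omega_F(\|N_n\|_2)$; both errors vanish, giving $\{Z_n\}_n \sim_\sigma 0$.

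For the converse $(\Rightarrow)$, my strategy is to obtain the counting estimate $\#\{i : \sigma_i(Z_n) > \epsilon\} = o(d_n)$ for every fixed $\epsilon > 0$, and then perform a diagonal extraction. Testing the distributional relation against a bump $F \in C_c(\mathbb{R})$ with $0 \leq F \leq 1$, $F(0) = 1$, and $\mathrm{supp}(F) \subseteq [-\epsilon,\epsilon]$, one has $\sum_i F(\sigma_i(Z_n)) \leq \#\{i : \sigma_i(Z_n) \leq \epsilon\}$ since $\sigma_i(Z_n) \geq 0$; dividing by $d_n$ the left side tends to $F(0) = 1$, which forces the complementary count to be $o(d_n)$. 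The main obstacle is the passage from this pointwise-in-$\epsilon$ decay to a single vanishing threshold $\epsilon_n$: applying the estimate with $\epsilon = 1/m$ produces integers $N_1 < N_2 < \cdots$ with $\#\{i:\sigma_i(Z_n)>1/m\} \leq d_n/m$ for $n \geq N_m$, and defining $\epsilon_n := 1/m$ for $N_m \leq n < N_{m+1}$ yields $\epsilon_n \to 0$ with $k_n := \#\{i:\sigma_i(Z_n)>\epsilon_n\} = o(d_n)$. Taking $R_n$ as the SVD-truncation that retains the top $k_n$ singular values of $Z_n$ and $N_n := Z_n - R_n$ then gives $\mathrm{rank}(R_n) \leq k_n = o(d_n)$ and $\|N_n\|_2 \leq \epsilon_n \to 0$, as required.

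The second bullet reduces to the first. For $p = \infty$ one has $\|Z_n\|_\infty = \sigma_1(Z_n) \to 0$, so the trivial decomposition $R_n = 0$, $N_n = Z_n$ already satisfies the first bullet. For $p < \infty$, a Markov-type inequality on the Schatten $p$-norm yields
\[
\#\{i : \sigma_i(Z_n) > \epsilon\} \;\leq\; \frac{\|Z_n\|_p^p}{\epsilon^p} \qquad \text{for every } \epsilon>0,
\]
and setting $a_n := \|Z_n\|_p^p / d_n$ (which tends to $0$ by hypothesis) together with $\epsilon_n := a_n^{1/(2p)}$ gives both $\epsilon_n \to 0$ and $\#\{i:\sigma_i(Z_n)>\epsilon_n\}/d_n \leq a_n^{1/2} \to 0$. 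SVD-truncating at level $\epsilon_n$ as in the previous paragraph furnishes a decomposition satisfying the hypotheses of the first bullet, which then concludes the proof.
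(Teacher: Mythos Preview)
The paper does not supply a proof of this proposition; it is stated in the preliminaries as a known result (the standard reference being the GLT monographs \cite{glt-book-I,glt-book-II}). Your argument is correct and follows the expected route: Weyl's singular-value perturbation bound for the backward implication, a test-function counting estimate plus diagonal extraction and SVD truncation for the forward one, and a Chebyshev--Markov inequality on the singular values for the Schatten-norm sufficient condition. One cosmetic remark: in the paper's conventions $\|\cdot\|_p$ denotes the induced operator $p$-norm while $\||\cdot\||_p$ denotes the Schatten $p$-norm, and the second item is only true under the Schatten interpretation you implicitly use (with the induced $2$-norm, for instance, the claim fails), so you may want to flag that reading explicitly.
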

 \section{Toeplitz and Circulant matrices}\label{ToepCirc}
 Toeplitz matrices are a significant and current topic, which has been used for well over a century, introduced in Toeplitz's initial works \cite{toeplitz1911theorie} and have been the subject of numerous works \cite{bottcher2000toeplitz,BoSi, book:25026}. These types of matrices arise in several applications such as the discretization of systems of constant coefficient-differential equations \cite{glt-2}, Markov chains \cite{bini2005numerical}, in the reconstruction of signals and images with missing data \cite{del2014symbol,hansen2006deblurring}, Riccati equations \cite{bini2011numerical}.

In this part, we describe Toeplitz and Circulant matrices, giving both their description and some of their features. Subsections \ref{scalarToep}, \ref{scalarCirc} describe the case of the scalar setting, while Subsections \ref{bloctoep}, \ref{bloccirc} focus on the most general setting where Toeplitz and Circulant matrices are extended to a Multilevel Block form.

 \subsection{Scalar Toeplitz matrices}\label{scalarToep}

 Any matrix  $A_n\in\mathbb{C}^{n\times n}$ having the form $$A_n=[a_{i-j}]_{i,j=1}^{n}=\begin{bmatrix}
  a_0 & a_{-1}&a_{-2}&\ldots &\ldots& a_{-(n-1)} \\
  a_1 &\ddots &\ddots &\ddots & & \vdots \\
  a_2&\ddots &\ddots &\ddots &\ddots & \vdots \\
  \vdots&\ddots &\ddots &\ddots &\ddots & a_{-2} \\
  \vdots& &\ddots &\ddots &\ddots & a_{-1} \\
  a_{n-1}&\ldots &\ldots &a_2 &a_1 & a_{0} \\
\end{bmatrix},$$ i.e., whose elements are constant along each diagonal, is called Toeplitz. This matrix is completely characterized by the elements of the first row and column \\$[a_{n-1},\ldots,a_2,a_1,a_0,a_{-1},a_{-2},\ldots,a_{-(n-1)}]$. \\
The entries of $A_n$ can be generated from the Fourier coefficients of a $2\pi-$periodic function $f\in L^1([-\pi,\pi])$
 $$a_{k}=\frac{1}{2\pi}\int_{-\pi}^{\pi}f(\theta)\textrm{e}^{-\hat{\imath}k\theta}d\theta,\hspace{1cm}k\in\mathbb{Z},$$
where $f$ is called the generating function, and it is given by Fourier series $$f(\theta)=\sum_{k=-\infty}^{\infty}a_k\textrm{e}^{\hat{\imath}k\theta}.$$ In this case $A_n$ is the Toeplitz matrix associated with $f$, expressed through the Toeplitz operator, i.e., $A_n=T_n(f)$.\\
It follows that the properties of the matrix $T_n(f)$ and its eigenvalues can be defined from the properties of the generating function $f$, see \cite{glt-book-I,S-LAA-1998}. The most relevant features are listed below:
\begin{enumerate}

\item If $f$ is real-valued almost everywhere $\left(f(\theta)=\overline{f(\theta)}\right)$, then $T_n(f)$ is Hermitian $(a_{-k}=\overline{a_{k}},$ $k\in\mathbb{Z})$ for all $n$.
\item If $f$ is even $\left(f(\theta)=f(-\theta)\right)$, then $T_n(f)$ is (complex) symmetric $(a_k=a_{-k}, k\in\mathbb{Z})$ for all $n$.
\item If $f$ is positive almost everywhere $\left(f\geq0\right)$, then $T_n(f)$ is Hermitian Positive Semi-Definite $(T_n(f)\geq0)$ for all $n$.
\item Let $f\in L^{1}([-\pi,\pi])$ real-valued almost everywhere, and let $m_f=\textrm{ess}\inf_{\theta\in[-\pi,\pi]}f(\theta)$,  $M_f=\textrm{ess}\sup_{\theta\in[-\pi,\pi]}f(\theta)$.
\begin{itemize}
\item If $m_f=M_f$, then $T_n(f)=m_fI_n$ and $\Lambda(T_n(f))=\{m_f\}$.
\item If $m_f<M_f$, then $\Lambda(T_n(f))\subset(m_f,M_f)$.

\end{itemize}
\item Let $\lambda_{1}(T_{n}(f))\leq\lambda_{2}(T_{n}(f))\leq\cdots \leq\lambda_{n}(T_{n}(f))$ be the eigenvalues of $T_{n}(f)$ ordered in non-decreasing order. Then for all $n$ fixed, for each $s\geq1$ independent of $n$, we have
$$\lim_{n\longrightarrow\infty}\lambda_{s}(T_{n}(f))={m_f},\ \ \ \ \ \lim_{n\longrightarrow\infty}\lambda_{n-s+1}(T_{n}(f))={M_f}.$$
\item Let $f\in L^{1}([-\pi,\pi])$ be real-valued almost everywhere, and let $m_f=\textrm{ess}\inf_{\theta\in[-\pi,\pi]}f(\theta)$,  $M_f=\textrm{ess}\sup_{\theta\in[-\pi,\pi]}f(\theta)$.
 Assume that $f(\theta)-m_f$ has a finite number of zeros of order $\alpha_1,\alpha_2,\ldots,\alpha_k>0$ and assume that $M_f-f(\theta)$ has a finite number of zeros of order $\beta_1,\beta_2,\ldots,\beta_q>0$, then
 \begin{itemize}
 \item For each fixed $j$ with respect to $n$
 $$\lambda_j(T_n(f))-m_f\sim\frac{1}{n^\alpha}, \qquad\hbox{ with }\alpha=\max_{1\leq s\leq k}\alpha_s.$$
 \item For each fixed $j$ with respect to $n$
 $$M_f-\lambda_j(T_n(f))\sim\frac{1}{n^\beta}, \qquad\hbox{ with }\beta=\max_{1\leq s\leq q}\beta_s.$$
 \end{itemize}
\end{enumerate}
 \subsection{Scalar Circulant matrices }\label{scalarCirc}
 Now, we provide a particular case of Toeplitz matrices which have the following form
   $$A_n=[a_{(i-j){\rm mod\ }n}]_{i,j=1}^{n}=\begin{bmatrix}
  a_0 & a_{n-1}&a_{n-2}&\ldots &\ldots& a_1 \\
  a_1 &\ddots &\ddots &\ddots & & \vdots \\
  a_2&\ddots &\ddots &\ddots &\ddots & \vdots \\
  \vdots&\ddots &\ddots &\ddots &\ddots & a_{n-2} \\
  \vdots& &\ddots &\ddots &\ddots & a_{n-1} \\
  a_{n-1}&\ldots &\ldots &a_2 &a_1 & a_{0} \\
\end{bmatrix}.$$
  This kind of matrix is called a Circulant matrix and it can be written as the following expression
  \begin{equation}\label{zmatrix}
  A_n=\sum_{s=0}^{n-1}a_sZ_{n}^{(s)},
  \end{equation} where $Z_{n}^{(s)}=\left\{\begin{array}{ll}
  1&\hbox{if }(i-j)\ {\rm mod\ } n=s,\\
  0&\mbox{otherwise},
  \end{array}\right.$ and $Z_{n}^{(s)}=Z_{n}^{s\rm{mod} n}$ for every $s\in\mathbb{Z}$.

 The Circulant matrices are diagonalizable by the unitary Fourier matrix $F_n$ given by $$F_n=\left[\frac{1}{\sqrt{n}}e^{-\hat{\imath}\frac{2\pi ij}{n}}\right]_{i,j=1}^{n}.$$
  Therefore, the algebra of Circulant matrices is defined as $$\mathcal{C}_{n}=\left\{A_n\in\mathbb{C}^{n\times n}|A_n=F_nD_nF_n^*,\  D_n=\underset{j=0,\ldots,n-1}{\textrm{diag}} f\left(\frac{2\pi j}{n}\right)\right\},$$ where $f(x)=p\circ e^{\hat{\imath}x}$, $p\in\mathcal{P}_{n-1}$  $(\mathcal{P}_{n}$ denote the space of polynomials of degree less or equal to $n$), and $p(z)=\sum_{s=0}^{n-1}a_{s}z^{s}$, that is the spectrum of every Circulant is uniquely determined by its first column.
 \subsection{Multilevel block-Toeplitz matrices}\label{bloctoep}
 This subsection is devoted to the concept of multilevel block-Toeplitz matrices, these matrices have the same structure as the scalar Toeplitz matrices, but this time the entries $a_k$ are matrices instead of scalars.
\\A matrix $\mathbf{A}_{n}\in\mathbb{C}^{rn\times rn}$ of the following structure
$$\mathbf{A}_{n}=[A_{i-j}]_{i,j=1}^{n}=\begin{bmatrix}
  A_0 & A_{-1}&A_{-2}&\ldots &\ldots& A_{-(n-1)} \\
  A_1 &\ddots &\ddots &\ddots & & \vdots \\
  A_2&\ddots &\ddots &\ddots &\ddots & \vdots \\
  \vdots&\ddots &\ddots &\ddots &\ddots & A_{-2} \\
  \vdots& &\ddots &\ddots &\ddots & A_{-1} \\
 A_{n-1}&\ldots &\ldots &A_2 &A_1 & A_{0} \\
\end{bmatrix}$$ is a block-Toeplitz matrix, where $A_k\in\mathbb{C}^{r\times r},\hbox{ }k\in\mathbb{Z}$. \\
Notice that the unilevel block Toeplitz matrix $\mathbf{A}_{n}$ can be written as   \begin{equation}\label{jmatrix}
{\mathbf{A}}_{n}=\sum_{k=-(n-1)}^{n-1}J_{n}^{(k)}\otimes A_k,
\end{equation}
where $J_m^{(l)}$ is the matrix of order $m$ whose $(i,j)$
entry equals $1$ if $i-j=l$ and zero otherwise. 
\begin{Definition}\label{def-block}
The Fourier coefficients of a matrix-valued function $\mathbf{f}\in L^1([-\pi,\pi],r)$ are given by $$\hat f_k:=\frac{1}{2\pi}\int_{(-\pi,\pi)}\mathbf{f}(\theta){\rm e}^{-\hat{\imath}k \theta}d\theta\in\mathbb{C}^{r\times r},\qquad  \, k\in\mathbb Z,$$
where the integrals are computed componentwise. Then, we define the block-Toeplitz matrix $T_n(\mathbf{f})$ associated with $\mathbf{f}$ {as} the $rn \times rn$  matrix given by
$$T_n(\mathbf{f})=\sum_{|k|<n}J_{n}^{(k)}\otimes\hat f_k,$$
with $J_n^{(k)}$ is the matrix as defined in \eqref{jmatrix}. The set $\{T_{ n}({\mathbf{f}})\}_{n}$ is
called the family of block-Toeplitz matrices generated by ${\mathbf{f}}$.
\end{Definition}
The next definition is an extension of the definition of the Toeplitz matrix in a more general case, when the entries elements are generated by a $t$-variate matrix-valued function, and the resulting matrix is multilevel block Toeplitz matrix.

\begin{Definition}
Given $\mathbf{n}\in \mathbb{N}^t$, a matrix of the form
\[
[A_{\mathbf{i}-\mathbf{j}}]_{\mathbf{i},\mathbf{j}=\mathbf{e}}^{\mathbf{n}} \in \mathbb{C}^{N(\mathbf{n})r \times N(\mathbf{n})r}
\]
with blocks $A_\mathbf{k}\in \mathbb{C}^{r\times r}$, $\mathbf{k} =
-(\mathbf{n}-\mathbf{e}), \ldots, \mathbf{n}-\mathbf{e}$, is
called a multilevel block Toeplitz matrix, or, more precisely, a
$t$-level $r$-block Toeplitz matrix.\\ Given a matrix-valued
function $\mathbf{f}\in L^1([-\pi, \pi]^t,r)$, we denote its Fourier coefficients by
\begin{equation}\label{eq:fourier_coefficients}
\hat{\mathbf{f}}_\mathbf{k} =\frac{1}{(2\pi)^t} \int_{[-\pi,\pi]^t}
\mathbf{f}(\boldsymbol{\theta}){\rm e}^{-\hat{\imath} (\mathbf{k}, \boldsymbol{\theta})}
d\boldsymbol{\theta} \in \mathbb{C}^{r\times r}, \ \ \ \mathbf{k}
\in \mathbb{Z}^t,
\end{equation}
where the integrals are computed componentwise and $(\mathbf{k},
\boldsymbol{\theta}) = k_1\theta_1 + \ldots + k_t\theta_t$. For
every $\mathbf{n} \in \mathbb{N}^t$, the $\mathbf{n}$-th Toeplitz
matrix associated with $\mathbf{f}$ is defined as
\begin{equation}\label{eq:toeplitz}
T_\mathbf{n}(\mathbf{f}) := [\hat{\mathbf{f}}_{\mathbf{i}-\mathbf{j}}]_{\mathbf{i},\mathbf{j}=\mathbf{e}}^{\mathbf{n}}
\end{equation}
or, equivalently, as
\begin{equation}\label{eq:toeplitz_kron}
T_\mathbf{n}(\mathbf{f}) = \sum_{|j_1|<n_1} \ldots \sum_{|j_t|<n_t} [J_{n_1}^{(j_1)} \otimes \ldots \otimes J_{n_t}^{(j_t)}]
\otimes \hat{\mathbf{f}}_{(j_1,\ldots, j_t)}.
\end{equation}

\end{Definition}
Analogously to the scalar Toeplitz matrix case, some properties of the matrix $T_{\mathbf{n}}(\mathbf{f})$ and its eigenvalues can be extracted from the properties of $\mathbf{f}$ as follows, see \cite{Ba2-ETNA,Ba1-ETNA,garoni2014structured,glt-book-II} 
\begin{enumerate}
\item If the matrix $\mathbf{f}\in L^{1}([-\pi,\pi]^{t},r)$ is Hermitian ($\mathbf{f}(\boldsymbol{\theta})=\mathbf{f}^*(\boldsymbol{\theta})$) almost everywhere, then all the matrices $T_n(\mathbf{f})$ are Hermitian.
\item If the matrix $\mathbf{f}\in L^{1}([-\pi,\pi]^{t},r)$ is Hermitian Positive Semi-Definite \\($\mathbf{f}(\boldsymbol{\theta})\geq0$) almost everywhere, then all the matrices $T_n(\mathbf{f})$ are Hermitian Positive Semi-Definite.
\item Let $\mathbf{f}\in L^1([-\pi,\pi]^t,r)$ be a Hermitian matrix and let $\mathbf{m_f}=\underset{\boldsymbol{\theta}\in([-\pi,\pi])^{t}}{\textrm{ess}\inf}\lambda_{\min}(\mathbf{f}(\boldsymbol{\theta}))$, $\mathbf{M_f}=\underset{\boldsymbol{\theta}\in([-\pi,\pi])^{t}}{\textrm{ess}\sup}\lambda_{\max}(\mathbf{f}(\boldsymbol{\theta}))$ then
\begin{itemize}
\item If $\mathbf{m_f}=\mathbf{M_f}$ then $T_{\mathbf{n}}(\mathbf{f})=\mathbf{m_f}I_{N(\mathbf{n})r}$ and $\Lambda(T_{\mathbf{n}}(\mathbf{f}))=\{\mathbf{m_f}\}$.
\item If $\mathbf{m_f}<\underset{\boldsymbol{\theta}\in([-\pi,\pi])^{t}}{\textrm{ess}\sup}\lambda_{\min}(\mathbf{f}(\boldsymbol{\theta}))\leq \mathbf{M_f} $ then $\Lambda(\mathbf{f})\subset(\mathbf{m_f},\mathbf{M_f}]$.
\item  If $\mathbf{m_f}\leq\underset{\boldsymbol{\theta}\in([-\pi,\pi])^{t}}{\textrm{ess}\inf}\lambda_{\max}(\mathbf{f}(\boldsymbol{\theta}))<\mathbf{M_f}$ then $\Lambda(\mathbf{f})\subset[\mathbf{m_f},\mathbf{M_f})$.
\end{itemize}

\item Let $\lambda_{1}(T_{\mathbf{n}}(\mathbf{f}))\leq\lambda_{2}(T_{\mathbf{n}}(\mathbf{f}))\leq\cdots \leq\lambda_{N(\mathbf{n})r}(T_{\mathbf{n}}(\mathbf{f}))$ be the eigenvalues of $T_{\mathbf{n}}(\mathbf{f})$ ordered in non-decreasing order. Then for each fixed $s\geq1$, independent of $\mathbf{n}$, we have
$$\lim_{\mathbf{n}\longrightarrow\infty}\lambda_{s}(T_{\mathbf{n}}(\mathbf{f}))=\mathbf{m_f},\ \ \ \ \ \lim_{\mathbf{n}\longrightarrow\infty}\lambda_{N(\mathbf{n})r-s+1}(T_{\mathbf{n}}(\mathbf{f}))=\mathbf{M_f}.$$
\item  Let $\mathbf{f}\in L^{1}([-\pi,\pi]^t,r)$ be a Hermitian matrix-valued and let $\mathbf{m_f}=\underset{\boldsymbol{\theta}\in([-\pi,\pi])^{t}}{\textrm{ess}\inf}\lambda_{\min}(\mathbf{f}(\boldsymbol{\theta}))$, $\mathbf{M_f}=\underset{\boldsymbol{\theta}\in([-\pi,\pi])^{t}}{\textrm{ess}\sup}\lambda_{\max}(\mathbf{f}(\boldsymbol{\theta}))$.
 Assume that $\lambda_{\min}(\mathbf{f}(\boldsymbol{\theta}))-\mathbf{m_f}$ has a finite number of zeros of order $\alpha_1,\alpha_2,\ldots,\alpha_k>0$ , then
 \begin{itemize}
 \item For each fixed $j$ with respect to $\mathbf{n}$ and assuming that $n_1\sim n_2\sim\ldots\sim n_t$
 $$\lambda_j(T_{\mathbf{n}}(\mathbf{f}))-\mathbf{m_f}\sim\frac{1}{[N(\mathbf{n})]^{\alpha/t}}, \qquad\hbox{ with }\alpha=\max_{1\leq s\leq k}\alpha_s.$$

 \end{itemize}	
\end{enumerate}
\begin{Remark}
From Item 2 we conclude that if $\mathbf{f}_{1},\mathbf{f}_{2}\in L^{1}([-\pi,\pi]^t,r)$ and $\mathbf{f}_{1}(\boldsymbol{\theta})\geq \mathbf{f}_{2}(\boldsymbol{\theta})$ almost everywhere, then $T_n(\mathbf{f}_{1})\geq T_n(\mathbf{f}_{2})$.
\end{Remark}

\subsection{Multilevel block Circulant matrices}\label{bloccirc}

If $A_{\mathbf{k}}\in\mathbb{C}^{r\times r}$ then, the matrix having the following form $$\mathbf{A}=\left[A_{(\mathbf{i}-\mathbf{j}){\rm mod\ }\mathbf{n}}\right]_{\mathbf{i},\mathbf{j}=\mathbf{e}}^{\mathbf{n}}\in\mathbb{C}^{rN(\mathbf{n})\times rN(\mathbf{n})}$$
is called multilevel Circulant matrix, where $Z_{n}^{(j)}$ is defined as in \eqref{zmatrix}.\\
The $\mathbf{n}$-th Circulant  matrix associated with $\mathbf{f}:[-\pi,\pi]^{t}\rightarrow\mathbb{C}^{r\times r}$ is defined as
\begin{equation*}\label{eq:circ}
\mathcal{C}_{\mathbf{n}}(\mathbf{f}) = \sum_{|j_1|<n_1} \ldots \sum_{|j_t|<n_t} [Z_{n_1}^{(j_1)} \otimes \ldots \otimes Z_{n_t}^{(j_t)}]
\otimes \hat{\mathbf{f}}_{(j_1,\ldots, j_t)}.
\end{equation*}
 \begin{Theorem}\cite{garoni2014structured}
The Circulant matrix  $\mathbf{A}$ can be decomposed as follows
$$\mathbf{A}=(F_{\boldsymbol{n}}\otimes I_{\mathbf{r}})\underset{\mathbf{j=0,\ldots,n-1}}{\rm diag}{\mathbf{g}_{\mathbf{n}}\left(\frac{2\pi\mathbf{j}}{\mathbf{n}}\right)}(F_{\boldsymbol{n}}\otimes I_{\mathbf{r}})^*$$
where $${\mathbf{g}}_{\mathbf{n}}(\boldsymbol{\theta})=\sum_{|k_1|<n_1}\sum_{|k_2|<n_2}\cdots\sum_{|k_t|<n_t}\hat{\mathbf{f}}_{\mathbf{k}}{\rm e}^{\imath(\mathbf{k},\boldsymbol{\theta})}                             \hbox{ and}\quad F_n=\frac{1}{\sqrt{N(\mathbf{n})}}\left({\rm e}^{\imath(\mathbf{k},\frac{2\pi\mathbf{j}}{\mathbf{n}})}\right)_{\mathbf{j,k=0}}^{\mathbf{n-e}}.$$
\end{Theorem}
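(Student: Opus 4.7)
The plan is to reduce the multilevel block case to the scalar one-level case by exploiting the mixed-product property of the Kronecker product, and then recognize the resulting middle factor as a block-diagonal matrix whose blocks are the values of $\mathbf{g}_{\mathbf{n}}$ on the tensor grid $\{2\pi\mathbf{j}/\mathbf{n}\}$. First, I would recall from Subsection~\ref{scalarCirc} the scalar building block: for every $k\in\mathbb{Z}$ one has
\[
Z_n^{(k)}=F_n\,\Omega_n^{k}\,F_n^{*},\qquad \Omega_n:=\underset{j=0,\ldots,n-1}{\mathrm{diag}}e^{\hat{\imath}\,2\pi j/n},
\]
which is the classical spectral decomposition of a scalar circulant (the single shift $Z_n^{(1)}$ has characters $e^{\hat{\imath}\,2\pi j/n}$ as eigenvalues with eigenvectors the columns of $F_n$, and all powers commute with it).

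Second, by iterating the mixed-product identity $(A_1\otimes A_2)(B_1\otimes B_2)=(A_1B_1)\otimes(A_2B_2)$ and using $(A_1\otimes A_2)^{*}=A_1^{*}\otimes A_2^{*}$, I obtain, with $F_{\mathbf{n}}:=F_{n_1}\otimes\cdots\otimes F_{n_t}$,
\[
Z_{n_1}^{(k_1)}\otimes\cdots\otimes Z_{n_t}^{(k_t)}=F_{\mathbf{n}}\bigl(\Omega_{n_1}^{k_1}\otimes\cdots\otimes\Omega_{n_t}^{k_t}\bigr)F_{\mathbf{n}}^{*}.
\]
A second application of the same property, applied to $(X\otimes I_r)(M\otimes Y)(X^{*}\otimes I_r)=(XMX^{*})\otimes Y$, yields
\[
[Z_{n_1}^{(k_1)}\otimes\cdots\otimes Z_{n_t}^{(k_t)}]\otimes\hat{\mathbf{f}}_{\mathbf{k}}
=(F_{\mathbf{n}}\otimes I_r)\bigl[(\Omega_{n_1}^{k_1}\otimes\cdots\otimes\Omega_{n_t}^{k_t})\otimes\hat{\mathbf{f}}_{\mathbf{k}}\bigr](F_{\mathbf{n}}\otimes I_r)^{*}.
\]
Summing over the multi-indices $\mathbf{k}$ and pulling the common outer factors out of the sum gives
\[
\mathbf{A}=(F_{\mathbf{n}}\otimes I_r)\Bigl[\sum_{\mathbf{k}}(\Omega_{n_1}^{k_1}\otimes\cdots\otimes\Omega_{n_t}^{k_t})\otimes\hat{\mathbf{f}}_{\mathbf{k}}\Bigr](F_{\mathbf{n}}\otimes I_r)^{*}.
\]

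Third, to identify the bracketed middle factor with the claimed block-diagonal matrix, I would observe that $\Omega_{n_1}^{k_1}\otimes\cdots\otimes\Omega_{n_t}^{k_t}$ is diagonal with $\mathbf{j}$-th diagonal entry
$\prod_{l=1}^{t}e^{\hat{\imath}\,2\pi j_l k_l/n_l}=e^{\hat{\imath}(\mathbf{k},\,2\pi\mathbf{j}/\mathbf{n})}$,
for $\mathbf{j}=\mathbf{0},\ldots,\mathbf{n}-\mathbf{e}$. Tensoring with the $r\times r$ block $\hat{\mathbf{f}}_{\mathbf{k}}$ therefore produces a block-diagonal matrix whose $\mathbf{j}$-th $r\times r$ block equals $e^{\hat{\imath}(\mathbf{k},\,2\pi\mathbf{j}/\mathbf{n})}\,\hat{\mathbf{f}}_{\mathbf{k}}$. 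Summing over $\mathbf{k}$ within the range $|k_l|<n_l$ for all $l$, the $\mathbf{j}$-th block becomes
$\sum_{\mathbf{k}}e^{\hat{\imath}(\mathbf{k},\,2\pi\mathbf{j}/\mathbf{n})}\,\hat{\mathbf{f}}_{\mathbf{k}}=\mathbf{g}_{\mathbf{n}}(2\pi\mathbf{j}/\mathbf{n})$, which is exactly the desired diagonal entry of the middle factor.

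Conceptually the argument is mechanical once the right identities are assembled; the main obstacle is bookkeeping, and in particular verifying the consistency of the sign convention for $F_n$ in the statement (positive exponent) with the convention inherited from the scalar case so that $Z_n^{(k)}$ is indeed diagonalized by the same $F_{\mathbf{n}}$ appearing in the displayed formula. One should also be careful that the ordering of the tensor-product basis agrees with the lexicographic indexing of the multi-indices $\mathbf{j}$ fixed in the multi-index notation; this is what guarantees that the block-diagonal identification in the third step produces precisely $\underset{\mathbf{j}=\mathbf{0},\ldots,\mathbf{n}-\mathbf{e}}{\mathrm{diag}}\mathbf{g}_{\mathbf{n}}(2\pi\mathbf{j}/\mathbf{n})$ and not a permuted variant.
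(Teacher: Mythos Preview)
Your argument is correct and is the standard route to this result: diagonalize each scalar circulant shift $Z_{n_l}^{(k_l)}$ by the one-dimensional DFT, tensorize via the mixed-product rule, and read off the block-diagonal middle factor as the sampled trigonometric polynomial $\mathbf{g}_{\mathbf{n}}$. Note, however, that the paper does not actually prove this theorem; it is stated with a citation to \cite{garoni2014structured} and no proof is given in the thesis itself, so there is nothing to compare against beyond observing that your proof is the expected one. Your closing caveat about the sign convention of $F_n$ is well placed, since the thesis uses a negative exponent in Subsection~\ref{scalarCirc} but a positive exponent in the statement here.
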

\section{Generalized Locally Toeplitz (GLT) sequences}
A GLT sequences \cite{Ba2-ETNA,Ba1-ETNA,glt-book-I,glt-book-II} is a specific sequence of matrices of size $d_n$ tending to infinity associated with a Lebesgue-measurable complex-valued function $\kappa$ which is defined on $[0,1]^t\times[-\pi,\pi]^t,\quad t\geq 1$, where $[0,1]^t $ and $[-\pi,\pi]^t$ represent the domains of physical variable and Fourier variable, respectively.
 In this section, we will give a brief overview on the multilevel form of GLT sequences class \cite{block-glt-II,block-glt-I} (or $r$-block GLT sequences class, $r\geq1$) where the associated function is measurable matrix-valued function $\boldsymbol\kappa:[0,1]^t\times[-\pi,\pi]^t\to\mathbb C^{r\times r}$, $t\ge1$.
  We use the notation $\{A_\mathbf{n}\}_\mathbf{n}\sim_{\rm
GLT}\boldsymbol\kappa$ to indicate that $\{A_\mathbf{n}\}_\mathbf{n}$ is a GLT sequences with
symbol $\boldsymbol\kappa$. The symbol of a GLT sequences is unique in the sense that if $\{A_\mathbf{n}\}_\mathbf{n}\sim_{\rm GLT}\boldsymbol\kappa$ and $\{A_\mathbf{n}\}_\mathbf{n}\sim_{\rm GLT}\boldsymbol\varsigma$ then $\boldsymbol\kappa=\boldsymbol\varsigma$ almost everywhere in $[0,1]^t\times[-\pi,\pi]^t$.\\
$\bullet$ \textbf{Block Diagonal Sampling Matrices.} For
$n\in\mathbb N$, $t=1$, and \\$a:[0,1]\to\mathbb C^{r\times r}$, we
define the block diagonal sampling matrix $D_n(a)$ as the block diagonal
matrix
\[
D_n(a)=\mathop{\rm diag}_{i=1,\ldots,n}a\Bigl(\frac{i}{n}\Bigr)=\left[\begin{array}{cccc}a(\frac1n) & & & \\ & a(\frac2n) & & \\ & & \ddots & \\
& & & a(1)\end{array}\right]\in\mathbb C^{rn\times rn}.
\]
For a general dimensionality $t\ge 2$, we consider
$a:[0,1]^t\to\mathbb C^{r\times r}$, $\mathbf{n}=(n_1,\ldots,n_t)$
and we define the block multilevel diagonal sampling matrix
$D_\mathbf{n}(a)$ as the block diagonal matrix
\[
D_\mathbf{n}(a)=\mathop{\rm diag}_{
\mathbf{i}=\mathbf{e},\ldots,\mathbf{n}}a\Bigl(\frac{\mathbf{i}}{\mathbf{n}}\Bigr)\in\mathbb
C^{r N(\mathbf{n})\times r N(\mathbf{n})},
\]
where the multi-index $\mathbf{i}/\mathbf{n}$ has to be intended as $(i_1/n_1,\ldots,i_t/n_t)$ and the ordering is the lexicographical one as in the work by E. Tyrtyshnikov, (see for instance \cite{ty-1}).

The main properties of $r$-block GLT sequences proved in \cite{block-glt-II,block-glt-I} are listed below: they represent a complete characterization of GLT sequences, equivalent to the full constructive definition.
\begin{enumerate}
    \item[\textbf{GLT\,1.}] If $\{A_\mathbf{n}\}_\mathbf{n}\sim_{\rm GLT}\boldsymbol\kappa$ then $\{A_\mathbf{n}\}_\mathbf{n}\sim_\sigma\boldsymbol\kappa$. If moreover each $A_\mathbf{n}$ is Hermitian then $\{A_\mathbf{n}\}_\mathbf{n}\sim_\lambda\boldsymbol\kappa$.
    \item[\textbf{GLT\,2.}] We have:
    \begin{itemize}
        \item $\{T_\mathbf{n}(\mathbf{f})\}_\mathbf{n}\sim_{\rm GLT}\boldsymbol\kappa(\mathbf{x},\boldsymbol{\theta})=\mathbf{f}(\boldsymbol{\theta})$ if $\mathbf{f}:[-\pi,\pi]^t\to\mathbb C^{r\times r}$ is in $L^1([-\pi,\pi]^t)$;
        \item $\{D_\mathbf{n}(a)\}_\mathbf{n}\sim_{\rm GLT}\boldsymbol\kappa(\mathbf{x},\boldsymbol{\theta})=a(\mathbf{x})$ if $a:[0,1]^t\to\mathbb C^{r\times r}$ is Riemann-integrable;
        \item $\{Z_\mathbf{n}\}_\mathbf{n}\sim_{\rm GLT}\boldsymbol\kappa(\mathbf{x},\boldsymbol{\theta})=O_r$ if and only if $\{Z_\mathbf{n}\}_\mathbf{n}\sim_\sigma 0$.
    \end{itemize}
    \item[\textbf{GLT\,3.}] If $\{A_\mathbf{n}\}_\mathbf{n}\sim_{\rm GLT}\boldsymbol\kappa$ and $\{B_\mathbf{n}\}_\mathbf{n}\sim_{\rm GLT}\boldsymbol\varsigma$ then:
    \begin{itemize}
        \item $\{A_\mathbf{n}^*\}_\mathbf{n}\sim_{\rm GLT}\boldsymbol\kappa^*$;
        \item $\{\alpha A_\mathbf{n}+\beta B_\mathbf{n}\}_\mathbf{n}\sim_{\rm GLT}\alpha\boldsymbol\kappa+\beta\boldsymbol\varsigma$ for all $\alpha,\beta\in\mathbb C$;
        \item $\{A_\mathbf{n}B_\mathbf{n}\}_\mathbf{n}\sim_{\rm GLT}\boldsymbol{\kappa\varsigma}$;
        \item $\{A_\mathbf{n}^\dag\}_\mathbf{n}\sim_{\rm GLT}\boldsymbol\kappa^{-1}$ provided that $\boldsymbol\kappa$ is invertible almost everywhere.
    \end{itemize}
    \item[\textbf{GLT\,4.}] $\{A_\mathbf{n}\}_\mathbf{n}\sim_{\rm GLT}\boldsymbol\kappa$ if and only if there exist $r$-block GLT sequences \\$\{B_\mathbf{n,m}\}_\mathbf{n}\sim_{\rm GLT}\boldsymbol\kappa_\mathbf{m}$ such that $\{B_\mathbf{n,m}\}_n\stackrel{\rm a.c.s.}{\longrightarrow}\{A_\mathbf{n}\}_\mathbf{n}$ and $\boldsymbol\kappa_\mathbf{m}\to\boldsymbol\kappa$ in measure, where the a.c.s. convergence is studied in \cite{glt-book-I}.
\end{enumerate}

 %

\chapter{Spectral analysis of ${\mathbb{P}_{k}}$ Finite Element Matrices in the case of Friedrichs-Keller triangulations via GLT Technology}

The current chapter deals with the spectral analysis of matrix-sequences arising from the $\mathbb{P}_k$  Lagrangian Finite Element approximation of the elliptic problem.
\begin{equation} \label{eq:modello}
\left \{
\begin{array}{l}
\mathrm{div} \left(-a(\mathbf{x}) \nabla u\right)
=f, \quad \mathbf{x}\in \Omega\subseteq \mathbb{R}^d, \\
u_{|\partial \Omega}=0,
\end{array}
\right.
\end{equation}
with $\Omega$ bounded connected subset of $\mathbb{R}^d$, $d\ge 1$, having smooth boundaries for $d\ge 2$, and $a$ being continuous and positive on $\overline\Omega$ and $f\in L^{2}(\Omega)$. For the numerical approximation we consider the classical $\mathbb{P}_k$ Finite Elements, where $k$ is a scalar indicating the global polynomial degree, in the case of Friedrichs-Keller triangulations,  leading, as usual, to sequences of matrices of increasing size. The new results concern the spectral analysis of the resulting matrix-sequences in the direction of the global distribution in the Weyl sense, with a concise overview on localization, clustering, extremal eigenvalues, and asymptotic conditioning. We study in detail the case of constant coefficients  on $\Omega=(0,1)^2$ and we give a brief account in the more involved case of variable coefficients and more general domains. Tools are drawn from the Toeplitz technology and from the rather new theory of GLT sequences, (see \cite{Ba2-ETNA,Ba1-ETNA,glt-book-I,glt-book-II} and references therein). Numerical results are shown for providing a practical evidence of the theoretical findings.

\section{Finite Element approximation} \label{sez:fem}
Problem (\ref{eq:modello}) can be formulated in variational form
as follows:
\begin{equation} \label{eq:formulazione_variazionale}
\begin{array}{l}
\textrm{find $u \in H_0^1(\Omega)$ such that} 
\int_\Omega \left ( a \nabla u \cdot \nabla \varphi
\right ) =\int_\Omega f \varphi  \quad \textrm{for all } \varphi
\in  H_0^1(\Omega),
\end{array}
%
\end{equation}
where $H_0^1(\Omega)$ is the space of square integrable functions
vanishing on $\partial \Omega$, with square integrable weak
derivatives. We assume that $\Omega \subseteq \mathbb{R}^2$ is  a
bounded connected set with smooth boundaries (in practice in our
numerical tests $\Omega$ will be a polygonal domain). Furthermore,
we make the following hypotheses on the coefficients:
\begin{equation} \label{eq:ipotesi_coefficienti}
\begin{array}{l}
a \in { C}(\overline \Omega), \textrm{ with } a(\mathbf{x}) \ge a_0 >0, \textrm{ and } 
f \in {L}^2(\Omega),
\end{array}
\end{equation}
so that  existence and uniqueness for problem
(\ref{eq:formulazione_variazionale}) are guaranteed. Hereafter, we
consider $\mathbb{P}_k$ Lagrangian Finite Element approximation of
problem (\ref{eq:formulazione_variazionale}). To this end, let
$\mathcal{T}_h=\{K\}$ be a usual Finite Element partition of
$\overline \Omega$ into triangles, with $h_K=\mathrm{diam}(K)$ and
$h=\max_K{h_K}$, and let $V_h \subset H^1_0(\Omega)$ be the space
of $\mathbb{P}_k$ Lagrangian Finite Element, i.e.
\[
\begin{array}{rcl}
V_h &=&  \{\varphi_h : \overline \Omega \rightarrow \mathbb{R} \
\textrm{ s.t. } \varphi_h \textrm{ is continuous,  }
{\varphi_h}_{|_K} \textrm{ is a polynomial of degree } \\
&& \textrm{\hskip 3cm  less or equal to $k$, and } {\varphi_h}_{|
\partial \Omega} =0
 \}.\end{array}
\]
The Finite Element approximation of problem
(\ref{eq:formulazione_variazionale})  reads as follows:
\begin{equation} \label{eq:formulazione_variazionale_fe}
\begin{array}{l}
\textrm{find $u_h \in V_h$ such that} 
\int_\Omega \left (a \nabla u_h \cdot \nabla \varphi_h
\right )
=\int_\Omega f \varphi_h \quad \textrm{for all } \varphi_h \in  V_h.
\end{array}
%
\end{equation}
For each internal node $i$ of the mesh $\mathcal{T}_h$, meaning
both vertices and additional nodal values associated to the
$\mathbb{P}_k$ approximation, let $\varphi_i \in V_h$ be such that
$\varphi_i(\textrm{node }i)=1$, and $\varphi_i(\textrm{node }j)=0$
if $i\ne j$. Then, the collection of all $\varphi_i$'s is  a basis
for $V_h$ and we denote by $n(h)$ its dimension. Then, we write
$u_h$ as $ u_h=\sum_{j=1}^{n(h)}u_j \varphi_j$  and the
variational equation (\ref{eq:formulazione_variazionale_fe})
becomes an algebraic linear system:
\begin{equation} \label{eq:modello_discreto}
\sum_{j=1}^{n(h)}\left (\int_\Omega a \nabla \varphi_j \cdot \nabla \varphi_i
\right ) u_j =\int_\Omega f \varphi_i, \quad i=1,\ldots, n(h).
\end{equation}
Our aim is to analyse the spectral properties of the matrix-sequences $\{A_n(a,\Omega,{\mathbb{P}_k})\}_n$ arising in the quoted  linear systems (\ref{eq:modello_discreto}), both from the theoretical and numerical point of view.

\section{A Few remarks on the monodimensional case: $\mathbb{Q}_k\equiv \mathbb{P}_k$, $d=1$}\label{sez:Pk-1D}
We report some results derived in \cite{Q_k} for the Lagrangian
Finite Elements $\mathbb{Q}_k\equiv \mathbb{P}_k$, $d=1$. Let us
consider  the Lagrange polynomials $L_0,\ldots,L_k$ associated
with the reference knots $t_j= j/ k,\ j=0,\ldots,k$:
\begin{equation}\label{Lp}
\begin{aligned}
L_i(t)&=\prod_{\substack{j=0\\j\ne
i}}^k\frac{t-t_j}{t_i-t_j}=\prod_{\substack{j=0\\j\ne
i}}^k\frac{kt-j}{i-j},\quad i=0,\ldots,k,\\ 
L_i(t_j)&=\delta_{ij},\quad i,j=0,\ldots,k,
\end{aligned}
\end{equation}
and let  the symbol $\langle\,,\rangle$ denote the scalar product
in $L^2([0,1])$, i.e.,
$\langle\varphi,\psi\rangle:=\int_0^1\varphi\psi$.
In the case $a(x)\equiv 1$ and $\Omega=(0,1)$ the $\mathbb{Q}_k$
matrix $A_n(a,\Omega,{\mathbb{Q}_k})$ equals the matrix $K_n^{(k)}$ in Theorem
\ref{itdsc}.
\begin{Theorem}\label{itdsc}
Let $k,n\ge1$. Then
{\allowdisplaybreaks\begin{equation}\label{Knr}
K_n^{(k)}=\left[\begin{array}{cccc}
K_0 & K_1^T & & \\
K_1 & \ddots & \ddots & \\
& \ddots & \ddots & K_1^T\\
& & K_1 & K_0
\end{array}\right]_-
\end{equation}}
where the  subscript `$-$'  means that the last row and column of
the  matrix in square brackets are deleted, while $K_0,K_1$ are
$k\times k$ blocks given by
\begin{align}\label{K_blocks}
\begin{aligned}
K_0&=\left[\begin{array}{ccc|c}
\langle L_1',L_1'\rangle & \cdots & \langle L_{k-1}',L_1'\rangle & \langle L_k',L_1'\rangle\\
\vdots & & \vdots & \vdots\\
\langle L_1',L_{k-1}'\rangle & \cdots & \langle L_{k-1}',L_{k-1_{\vphantom{\sum}}}'\rangle & \langle L_k',L_{k-1}'\rangle\\
\hline
\langle L_1',L_k'\rangle & \cdots & \langle L_{k-1}',L_k'\rangle & {\langle L_k',L_k'\rangle+\langle L_0',L_0'\rangle}^{\vphantom{\sum}}
\end{array}\right],\\
K_1&=\left[\begin{array}{cccc|c}
0 & 0 & \cdots & 0 & \langle L_0',L_1'\rangle\\
0 & 0 & \cdots & 0 & \langle L_0',L_2'\rangle\\
\vdots & \vdots & & \vdots & \vdots\\
0 & 0 & \cdots & 0 & \langle L_0',L_k'\rangle
\end{array}\right],
\end{aligned}
\end{align}
with $L_0,\ldots,L_k$ being the Lagrange polynomials in
\eqref{Lp}. In particular, $K_n^{(k)}$ is the $(nk-1)\times(nk-1)$
leading principal submatrix of the block Toeplitz matrices
$T_n(\mathbf{f}_{k})$ and $\mathbf{f}_{k}:[-\pi,\pi]\to\mathbb C^{k\times k}$ is an Hermitian matrix-valued function given by
\begin{align}
\mathbf{f}_{k}(\theta)&:=K_0+K_1 e^{\hat{\imath}\theta}+K_1^T e^{-\hat{\imath} \theta}\label{fr(theta)}.
\end{align}
\end{Theorem}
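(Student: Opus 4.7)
The plan is to exploit the locality of the Lagrangian $\mathbb{P}_k$ basis functions on a uniform partition of $(0,1)$, combined with an affine pullback to the reference interval, and then read off the block tridiagonal structure from the support pattern of the basis. First I would partition $(0,1)$ into $n$ equal subintervals $I_i=[\frac{i-1}{n},\frac{i}{n}]$, place the Lagrange nodes on each $I_i$ as the images of $t_j=j/k$, $j=0,\ldots,k$, under the affine map $\phi_i(t)=\frac{i-1+t}{n}$, and count: $n(k-1)$ strictly interior nodes plus $n-1$ shared vertices equals $nk-1$ degrees of freedom once the two Dirichlet nodes $0,1$ are discarded. The key organisational step is to group the DOFs by interval: \emph{block} $i$ (for $i=1,\ldots,n$) consists of the $k-1$ interior DOFs of $I_i$ followed by the DOF at the right vertex $i/n$. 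This yields $n$ blocks of length $k$, i.e.\ a nominal dimension $nk$, from which the very last entry (the excluded Dirichlet node $x=1$) is absent; this is precisely the ``last row and column deleted'' convention encoded by the subscript ``$-$''.

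Next I would express each basis function as a pullback of a reference Lagrange polynomial: an interior DOF of $I_i$ associated to index $j\in\{1,\ldots,k-1\}$ is supported only in $I_i$ and equals $L_j\circ\phi_i^{-1}$ there, while the shared-vertex DOF at $i/n$ equals $L_k\circ\phi_i^{-1}$ on $I_i$ and $L_0\circ\phi_{i+1}^{-1}$ on $I_{i+1}$. The change of variables gives, for any two basis functions supported in a common interval $I_\ell$,
\[
\int_{I_\ell}(L_p\circ\phi_\ell^{-1})'(x)(L_q\circ\phi_\ell^{-1})'(x)\,dx = n\int_0^1 L_p'(t) L_q'(t)\,dt = n\langle L_p',L_q'\rangle,
\]
so every local contribution is a scalar multiple of an inner product appearing in \eqref{K_blocks} (the uniform global factor $n$ is either absorbed into the normalisation of $A_n(a,\Omega,\mathbb{P}_k)$ or is the common scaling that the statement factors out).

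The block structure then follows from support analysis. Each entry of the global stiffness matrix is a sum over intervals on which both basis functions live. Only three cases can contribute: (i) diagonal block $(i,i)$ assembles all pairs within block $i$; the interior–interior and interior–endpoint pairs pick up a single contribution from $I_i$, while the endpoint–endpoint pair receives two contributions, $\langle L_k',L_k'\rangle$ from $I_i$ plus $\langle L_0',L_0'\rangle$ from $I_{i+1}$, reproducing the $(k,k)$ entry of $K_0$; (ii) the subdiagonal block $(i+1,i)$ involves only the shared vertex of $I_i$ (the $k$-th DOF of block $i$) paired with every DOF of block $i+1$, yielding the entries $\langle L_0',L_j'\rangle$ in the last column of $K_1$ and zeros elsewhere; (iii) the superdiagonal is $K_1^T$ by symmetry of the integral. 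No block further off the diagonal can be nonzero, since two blocks separated by more than one index involve basis functions with disjoint supports.

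Finally, to identify $K_n^{(k)}$ as the leading $(nk-1)\times(nk-1)$ principal submatrix of $T_n(\mathbf{f}_k)$, I would observe that the construction above produces exactly the block tridiagonal Toeplitz matrix with constant diagonal $K_0$ and subdiagonal $K_1$ if one formally keeps the Dirichlet node $x=1$ as the last ``ghost'' DOF; by Definition~\ref{def-block} this matrix equals $T_n(\mathbf{f}_k)$ with $\mathbf{f}_k(\theta)=K_0+K_1{\rm e}^{\hat{\imath}\theta}+K_1^T{\rm e}^{-\hat{\imath}\theta}$, and Hermitianity is immediate from $\mathbf{f}_k(\theta)^*=\mathbf{f}_k(\theta)$. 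Deleting the last row and column enforces the Dirichlet condition at $1$ and gives $K_n^{(k)}$. The main obstacle is consistent bookkeeping for the shared-vertex DOFs: one must verify that placing the vertex at position $k$ (the end of block $i$) rather than at position $1$ (the start of block $i+1$) produces $K_1$ with nonzero \emph{last column} (and not last row), and that no extra contribution is double-counted at block boundaries — both follow once the convention is fixed and the support analysis above is tracked carefully.
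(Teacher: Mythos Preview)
Your proof is correct and follows the standard approach: group the degrees of freedom element-by-element (interior nodes of $I_i$ followed by the right vertex), pull back to the reference interval, and use the support pattern of the hat-type basis to read off the block tridiagonal structure with constant blocks $K_0,K_1$. The paper itself does not prove this theorem; it is quoted from \cite{Q_k} and stated without proof in Section~\ref{sez:Pk-1D}, so there is nothing substantive to compare against here --- your argument is essentially what one would find in the cited source.

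Two minor remarks. First, you correctly identify the stray global factor $n$ coming from the change of variables $\phi_i^{-1}$; in the paper's convention $K_n^{(k)}$ is implicitly the scaled stiffness matrix (so that for $k=1$ one recovers the unit tridiagonal $\mathrm{tridiag}(-1,2,-1)$), and your parenthetical comment already covers this. Second, your claim that adjoining the ghost node at $x=1$ makes the full $nk\times nk$ matrix \emph{equal} to $T_n(\mathbf{f}_k)$ is slightly off: the very last diagonal entry would be $\langle L_k',L_k'\rangle$ rather than $\langle L_k',L_k'\rangle+\langle L_0',L_0'\rangle$, since there is no interval $I_{n+1}$. This does not affect the statement, because $K_n^{(k)}$ is only claimed to be the leading $(nk-1)\times(nk-1)$ principal submatrix, and that submatrix is indeed identical in both matrices.
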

An interesting property of the Hermitian matrix-valued functions
$\mathbf{f}_k(\theta)$ defined in \eqref{fr(theta)} is reported in the
theorem  below. From the point of view of the spectral
distribution, the message is that, independently of the parameter
$k$, the spectral symbol is of the same character as
$2-2\cos(\theta)$ which is the symbol of the basic linear Finite
Elements and the most standard Finite Differences approximation.

\begin{Theorem}\label{dfr}
Let $k\ge1$, then
\begin{equation}\label{dfrFormula}
\det(\mathbf{f}_k(\theta))=d_k(2-2\cos(\theta)),
\end{equation}
where $d_k=\det([\langle
L_j',L_i'\rangle]_{i,j=1}^k)=\det([\langle
L_j',L_i'\rangle]_{i,j=1}^{k-1})>0$ (with  $d_1=1$ being the
determinant of the empty matrix by convention) and
$L_0,\ldots,L_k$ are the Lagrange polynomials \eqref{Lp}.
\end{Theorem}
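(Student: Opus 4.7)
The idea is that all $\theta$-dependence of $\mathbf{f}_k(\theta)$ is confined to its last row and column, because the formula for $K_1$ in \eqref{K_blocks} shows $K_1 = u\,e_k^T$ with $u=(\langle L_0',L_i'\rangle)_{i=1}^k$ and $e_k$ the last vector of the canonical basis. Setting $z=\mathrm{e}^{\hat{\imath}\theta}$, $u'=(u_i)_{i=1}^{k-1}$, $w=(\langle L_k',L_i'\rangle)_{i=1}^{k-1}$, $\alpha_0=\|L_0'\|^2+\|L_k'\|^2$, and $M:=[\langle L_j',L_i'\rangle]_{i,j=1}^{k-1}$, I would split the matrix as
\[
\mathbf{f}_k(\theta)=\begin{pmatrix} M & w+z\,u'\\ (w+z\,u')^* & \alpha_0+2u_k\cos\theta\end{pmatrix}.
\]
The polynomials $L_1',\ldots,L_{k-1}'$ are linearly independent, because the only linear relation among $L_0',\ldots,L_k'$ is $\sum_{i=0}^k L_i'\equiv 0$ (obtained by differentiating the partition of unity $\sum_i L_i\equiv 1$); hence $M$ is positive definite and a Schur complement with respect to it is available.

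That same partition-of-unity identity is the algebraic workhorse. Taking $L^2(0,1)$-inner products of $L_0'+L_k'=-(L_1'+\cdots+L_{k-1}')$ against $L_j'$ for $j=1,\ldots,k-1$ yields $u'+w=-M\mathbf{1}$ (with $\mathbf{1}$ the all-ones vector in $\mathbb{R}^{k-1}$), and squaring it in $L^2$ yields $\alpha_0+2u_k=\mathbf{1}^T M\mathbf{1}$. Pairing the same identity with $L_k'$ itself gives the scalar relation $u_k+w^T\mathbf{1}=-\|L_k'\|^2$. Substituting the first two relations into the block form collapses the entries into
\[
w+z u'=(1-z)w-z\,M\mathbf{1},\qquad \alpha_0+2u_k\cos\theta=\mathbf{1}^T M\mathbf{1}-u_k(2-2\cos\theta).
\]

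I would then apply $\det\mathbf{f}_k(\theta)=\det M\cdot\bigl[\alpha(\theta)-v(\theta)^* M^{-1}v(\theta)\bigr]$ with $v(\theta)=(1-z)w-z M\mathbf{1}$. Expanding the quadratic form and using $|1-z|^2=2-2\cos\theta$ and $|z|=1$, the $\mathbf{1}^T M\mathbf{1}$ contributions cancel, every surviving cross term picks up a common factor $(2-2\cos\theta)$, and invoking $u_k+w^T\mathbf{1}=-\|L_k'\|^2$ reduces the bracket to
\[
\alpha(\theta)-v(\theta)^*M^{-1}v(\theta)=(2-2\cos\theta)\bigl(\|L_k'\|^2-w^T M^{-1}w\bigr).
\]
Finally, $\|L_k'\|^2-w^TM^{-1}w$ is itself the Schur complement of $M$ inside the matrix $M^{(k)}=\left(\begin{smallmatrix} M & w\\ w^T & \|L_k'\|^2\end{smallmatrix}\right)$, so it equals $d_k/\det M$. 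This cancels the outer $\det M$ and produces exactly $\det\mathbf{f}_k(\theta)=d_k(2-2\cos\theta)$.

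It remains to justify the companion identity $d_k=\det M^{(k)}=\det M^{(k-1)}>0$. The same Schur complement gives $d_k=\det M\cdot s$, where $s$ is the squared $L^2(0,1)$-distance from $L_k'$ to $\mathrm{span}(L_1',\ldots,L_{k-1}')$. Inside the $k$-dimensional space of polynomials of degree $\le k-1$, this span has a one-dimensional orthogonal complement; since $\int_0^1 L_j'=L_j(1)-L_j(0)=0$ for $j=1,\ldots,k-1$, the functional $p\mapsto\int_0^1 p$ vanishes on the span, so its Riesz representative, namely the constant function $1$ (with $\|1\|_{L^2}=1$), generates that complement. Because $\int_0^1 L_k'=1$, the orthogonal part of $L_k'$ is exactly the constant $1$, giving $s=1$ and $d_k=\det M^{(k-1)}$; strict positivity is automatic, as $M^{(k)}$ is the Gram matrix of a linearly independent set. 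The main obstacle is the algebraic bookkeeping in the Schur simplification of paragraph three: the several cross-terms involving $\mathbf{1}^Tw$, $u_k$, and $\mathbf{1}^TM\mathbf{1}$ collapse only because the three partition-of-unity relations are inserted in exactly the right places.
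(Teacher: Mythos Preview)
Your argument is correct and self-contained. Note, however, that the present paper does not actually prove Theorem~\ref{dfr}: the result is quoted from \cite{Q_k}, and only the statement is reproduced here. Hence there is no ``paper's own proof'' to compare against.

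For the record, the ingredients you use---the observation that $K_1=u\,e_k^T$ so that all $\theta$-dependence sits in the last row and column, the Schur complement with respect to the positive definite Gram block $M=[\langle L_j',L_i'\rangle]_{i,j=1}^{k-1}$, and the three scalar/vector identities coming from $\sum_{i=0}^k L_i'\equiv 0$---are exactly the natural ones, and the algebra you sketch closes as written. The identification of the orthogonal complement of $\mathrm{span}(L_1',\ldots,L_{k-1}')$ inside $\mathcal{P}_{k-1}$ with the constants, via $\int_0^1 L_j'=0$ for $1\le j\le k-1$, is a clean way to obtain $d_k=\det M^{(k-1)}$ without further computation.
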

\section{Two dimensional case: $\mathbb{P}_k$, $d=2$ - symbol definition}\label{sez:Pk-2D}
Hereafter we focus on $\mathbb{P}_k$ Lagrangian Finite Elements in
the case of Friedrichs-Keller triangulations $\{\mathcal{T}_K\}$
of the domain $\Omega$ as reported in Figure \ref{fig:Friedrics-Keller}. Nodes, that is both vertices and
additional nodal values associated to the chosen $\mathbb{P}_k$
approximation, are ordered in standard lexicographical
way 
from left to right, from bottom to top. \\
\begin{figure}
\centering
\label{fig:Friedrics-Keller}
\includegraphics[width=\textwidth]{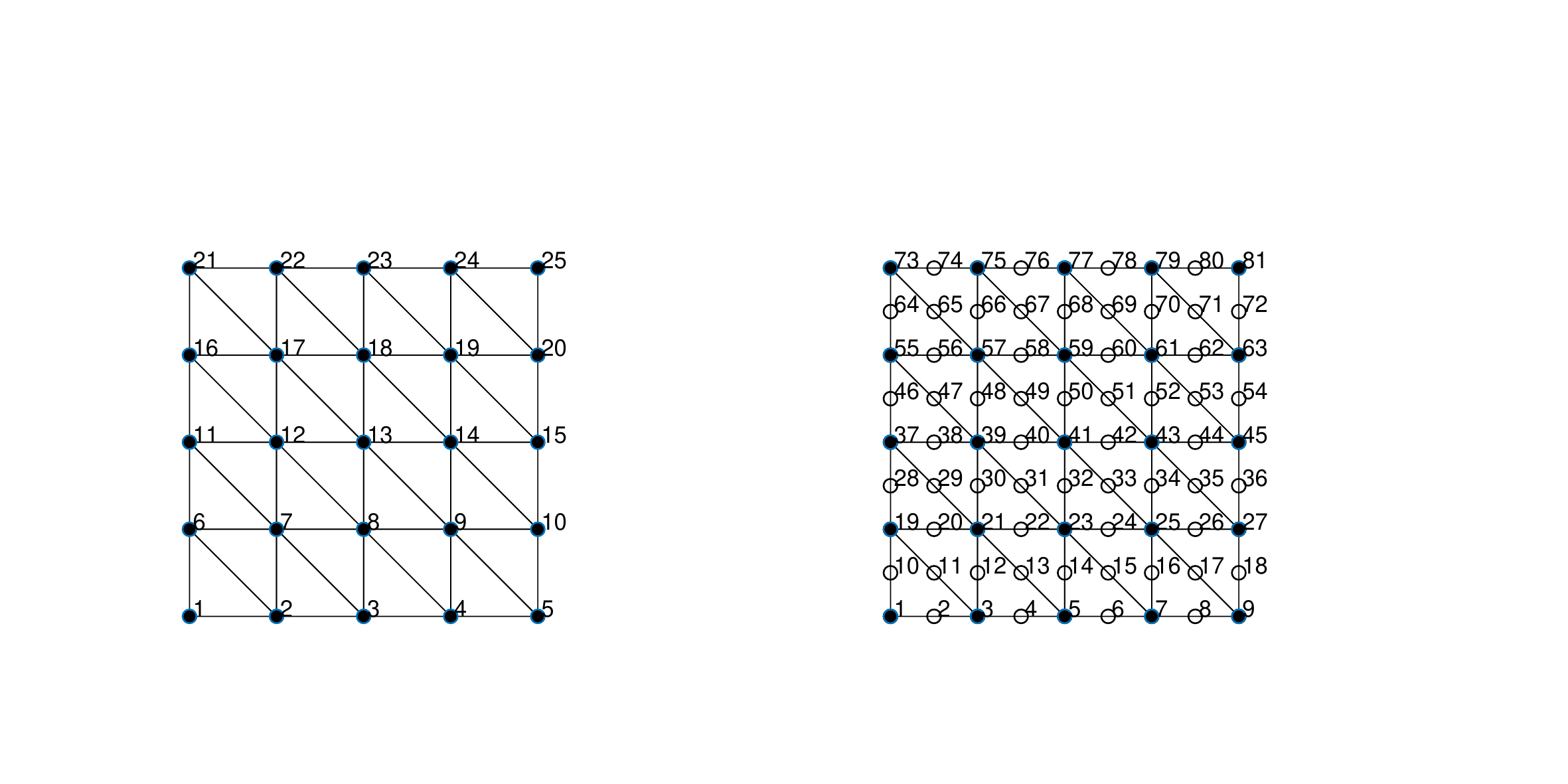}
\caption{Friedrics-Keller meshes for $\mathbb{P}_k,\ k=1,2$.}
\end{figure}
The stiffness matrix is built by considering the standard
assembling procedure with respect to the reference element $\hat{K}$ in Figure \ref{fig:elementodiriferimento}. Let $G$ be the affine
transformation  mapping  $\hat{K}$ onto a generic $K \in
\mathcal{T}_K$ defined as
\begin{equation*}
G\left ( \left [\begin{array}{c}
\hat{x} \\
\hat{y}
\end{array}\right] \right) = \left [\begin{array}{cc}
(e_{3})_1 & -(e_{2})_1 \\
(e_{3})_2 & -(e_{2})_2
\end{array}\right]
\left [\begin{array}{c}
\hat{x} \\
\hat{y}
\end{array}\right]
+
\left [\begin{array}{c}
x^{v_1} \\
y^{v_1}
\end{array}\right],
\end{equation*}
where $e_1=[x^{v_3}-x^{v_2},y^{v_3}-y^{v_2}]^T$,
$e_2=[x^{v_1}-x^{v_3},y^{v_1}-y^{v_3}]^T$,
$e_3=[x^{v_2}-x^{v_1},y^{v_2}-y^{v_1}]^T$ represent the oriented
 edge vectors and $(x^{v_i},y^{v_i})$ are the coordinates of the
$i^{th}$ vertex $v_i$.
\begin{figure}
\centering
\includegraphics[width=\textwidth]{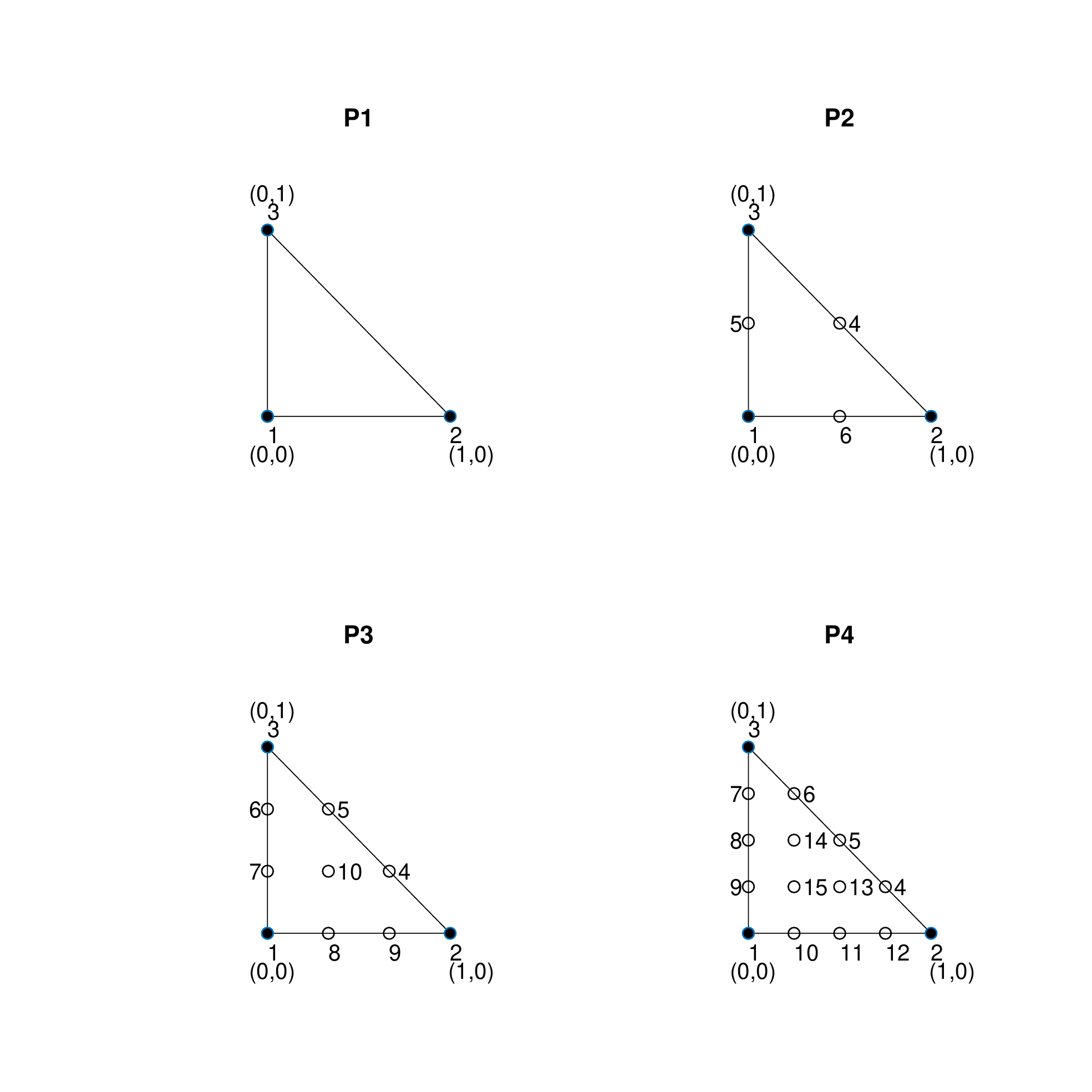}
\caption{Reference element $\hat{K}$ and nodal points for
$\mathbb{P}_k,\ k=1,\ldots,4$.} \label{fig:elementodiriferimento}
\end{figure}
Thus,
\begin{equation*}
A^{El}_K=  \left [  \int_K \nabla \varphi_j  \cdot \nabla
\varphi_i  \right ]_{i,j}
\end{equation*}
with
\begin{equation} \label{eq:int_ij}
\int_K \nabla \varphi_j  \cdot \nabla \varphi_i = \mathrm{det}
(J_G(\hat{x}, \hat{y})) \int_{\hat{K}} [J_{G^{-1}}^T \hat{\nabla}\hat{\varphi_j}
(\hat x,\hat y) ]  \cdot
                [J_{G^{-1}}^T \hat{\nabla} \hat{\varphi}_i(\hat x,\hat y) ] d\hat{x} d\hat{y},
\end{equation}
 where the $ \hat{\varphi}_s $'s are the shape functions on $\hat{K}$,
$\mathrm{det} (J_G(\hat{x}, \hat{y})) = 2 |K|$ and $J_{G^{-1}}^T$ is
the transpose of the Jacobian matrix of the inverse mapping
$G^{-1}$, that is,
\begin{equation*}
J_{G^{-1}}^T = \frac{1}{2|K|} \left [\begin{array}{rr}
-(e_{2})_2 & -(e_{3})_2 \\
(e_{2})_1 & (e_{3})_1
\end{array}\right].
\end{equation*}
In the present section we will preliminarily  consider the case
$a\equiv 1$ and $\Omega=(0,1)^2$.

\subsection{Case $k=1$}
Even if  well known, we start by considering the case $k=1$, that is the one of a linear Lagrangian FE approximation.  The shape functions on $\hat{K}$ are defined as
\begin{eqnarray}
\hat{\varphi}_1(\hat{x}, \hat{y})&=&-\hat{x} -\hat{y} +1, \nonumber\\
\hat{\varphi}_2(\hat{x}, \hat{y})&=&\hat{x},  \label{eq:baseP12D}\\
\hat{\varphi}_3(\hat{x}, \hat{y})&=&\hat{y}, \nonumber
\end{eqnarray}
so that, according to (\ref{eq:int_ij}), the
{elemental} matrix for a generic triangle of the
Friedrichs-Keller triangulation, that is a right-angle triangle of
constant edge $h$, equals
\begin{equation} \label{eq:Ael_k1}
A_{K_1}^{El} = \frac{1}{2} \left [\begin{array}{rrr}
2 & -1 & -1 \\
-1 & 1 & 0 \\
-1 & 0 & 1
\end{array} \right ] \quad \mathrm{or} \quad
A_{K_2}^{El} = \frac{1}{2} \left [\begin{array}{rrr}
1 & -1 & 0 \\
-1 & 2 & -1 \\
0 & -1 & 1
\end{array} \right ],
\end{equation}
for triangles of type 1 (right angle in vertex 1) or type 2 (right angle in vertex 2), respectively. \\
The stiffness matrix $A_n=A_n(1,\Omega,{\mathbb{P}_1})$, that is
with $a\equiv 1$, is the twolevel Toeplitz matrix generated by the
symbol
$f_{\mathbb{P}_1}(\theta_{1},\theta_{2})=4-2\cos(\theta_{1})-2\cos(\theta_{2})$.
In fact, $A_n$ is block tridiagonal, i.e.
\[
A_n=\mathrm{tridiag}(A_1,A_0,A_{-1}),
\]
 where the triangular blocks are such that $A_0=\mathrm{tridiag}(a_1^0,a_0^0,a_{-1}^0)=\mathrm{tridiag}(-1,4,-1)$, $A_1=A_{-1}=\mathrm{diag}(a_1^1)=-I$, $I$
being the identity matrix.
Thus we can easily read the corresponding symbol as follows
\begin{equation} \label{eq:simbolotridiag}
f_{\mathbb{P}_1}(\theta_{1},\theta_{2})=f_{A_0}(\theta_{1})+f_{A_{-1}}(\theta_{1}) e^{-\hat{\imath} \theta_{2}} +f_{A_{1}}(\theta_{1}) e^{\hat{\imath} \theta_{2}},
\end{equation}
with $f_{A_0}(\theta_{1})=a_0^0 +a_{-1}^0 e^{-\hat{\imath} \theta_{1}}+a_{1}^0 e^{\hat{\imath} \theta_{1}}$ and $f_{A_{1}}(\theta_{1})=f_{A_{-1}}(\theta_{1})=a_1^1$.\\
%
Clearly, the natural arising question is: which properties are preserved in considering Lagrangian FE of higher order?
\subsection{Case $k=2$}
Hereafter, we will consider in full detail the case of quadratic Lagrangian FE ($k=2$), the aim being to introduce a suitable notation making easier the analysis of higher order approximations  as well.
By referring to the reference element, (see Figure \ref{fig:elementodiriferimento}), we have the following shape functions
\begin{eqnarray}
\hat{\varphi}_1(\hat{x}, \hat{y})&=& 2\hat{x}^2 + 2 \hat{y}^2 + 4 \hat{x} \hat{y} -3 \hat{x} -3 \hat{y} +1 ,\nonumber\\
\hat{\varphi}_2(\hat{x}, \hat{y})&=& \hat{x} (2 \hat{x} -1),  \nonumber \\
\hat{\varphi}_3(\hat{x}, \hat{y})&=& \hat{y} (2 \hat{y} -1), \nonumber \\
\hat{\varphi}_4(\hat{x}, \hat{y})&=& 4 \hat{x} \hat{y}, \label{eq:baseP22D}\\
\hat{\varphi}_5(\hat{x}, \hat{y})&=& -4 \hat{y} ( \hat{x} +\hat{y}-1),  \nonumber \\
\hat{\varphi}_6(\hat{x}, \hat{y})&=& -4 \hat{x} ( \hat{x} +\hat{y}-1), \nonumber
\end{eqnarray}

so that, according to (\ref{eq:int_ij}), the {elemental} matrix for a generic triangle equals
\begin{equation} \label{eq:Ael_k2}
\renewcommand{\arraystretch}{1.2}
A_{K_1}^{El} =  \left [\begin{array}{rrrrrr}
 \mbox{\small 1} &  \frac{1}{6}     &  \frac{1}{6}     &  \mbox{\small 0} & -\frac{2}{3}     & -\frac{2}{3} \\
 \frac{1}{6}     &  \frac{1}{2}     &  \mbox{\small 0} &  \mbox{\small 0} &  \mbox{\small 0} & -\frac{2}{3} \\
 \frac{1}{6}     &  \mbox{\small 0} &  \frac{1}{2}     &  \mbox{\small 0} & -\frac{2}{3}     &  \mbox{\small 0}   \\
 \mbox{\small 0} &  \mbox{\small 0} &  \mbox{\small 0} &  \frac{8}{3}     & -\frac{4}{3}     &  -\frac{4}{3} \\
-\frac{2}{3}     &  \mbox{\small 0} & -\frac{2}{3}     & -\frac{4}{3}     &  \frac{8}{3}     &  \mbox{\small 0} \\
-\frac{2}{3}     & -\frac{2}{3}     &  \mbox{\small 0} & -\frac{4}{3}     &  \mbox{\small 0} &  \frac{8}{3}\\
\end{array} \right ]
\end{equation}
in the case of triangles of type 1, or a suitable permutation in the case of triangles of type 2.
in the case of triangles of type 1, or a suitable permutation in the case of triangles of type 2.
Despite the use of Lagrangian quadratic approximation, the
stiffness matrix $A_n=A_n(1,\Omega,{\mathbb{P}_2})$ shows again a
block tridiagonal structure $A_n=\mathrm{tridiag}(A_1,A_0,A_{-1})$
as in the linear case, the higher approximation stressing its
influence just inside the blocks $A_i$. We might say that the
quoted tridiagonal structure refers once again to triangles'
vertices, while the internal structure is stressing the increased
number of additional nodal points, that is three in the case at
hand. In fact, we observe in each block $A_i$ a $2\times 2$ block
structure as  follows:

\begin{equation}
A_0=\left [\begin{array}{cc}
   B_0^{11}      & B_0^{12} \\
   (B_0^{12})^T  & B_0^{22} \\
\end{array} \right ], \quad
A_{-1}=\left [\begin{array}{cc}
   0 & 0 \\
   B_{-1}^{21}  & B_{-1}^{22}\\
\end{array} \right ], \quad A_{1} = A_{-1}^T,
\end{equation}
where the superscripts $i,j$ in $B_l^{ij}$ denote the position inside the $2 \times 2$ block and the subscript $l$ the belonging to the block $A_l$,
so that
\begin{equation} \label{eq:blocchi_Bl}
\renewcommand{\arraystretch}{1.2}
A_n=\left [\begin{array}{cc|cc|cc|cc}
   B_0^{11}     & B_0^{12}        & 0 & 0\\
  (B_0^{12})^T  & B_0^{22}        & B_{-1}^{21}   & B_{-1}^{22} \\
\hline
   0            & (B_{-1}^{21})^T & B_0^{11}      & B_0^{12}    & 0 & 0\\
   0            & (B_{-1}^{22})^T & (B_0^{12})^T  & B_0^{22}  & B_{-1}^{21}   & B_{-1}^{22} \\
   \hline
 & & \ddots & &  \ddots & &  \ddots & \\
 \hline
   && 0            & (B_{-1}^{21})^T & B_0^{11}      & B_0^{12}    & 0 \\
   && 0            & (B_{-1}^{22})^T & (B_0^{12})^T  & B_0^{22}  & B_{-1}^{21}    \\
   \hline
     &&&& 0            & (B_{-1}^{21})^T & B_0^{11}          \\
\end{array} \right ].
\end{equation}
More important, the very same structure depicted in (\ref{eq:blocchi_Bl}), including the very same cutting in the lower right corner, appears in every block $B_l^{ij}$ by considering suitable $2\times 2$ matrices as follows
\begin{equation*}
B_l^{ij}=\mathrm{tridiag}\left(
a_{1}^{B_l^{ij}},a_0^{B_l^{ij}},a_{-1}^{B_l^{ij}} \right ), \quad
l\in \{-1, 0, 1\}, \ i,j \in \{1, 2\},
\end{equation*}
where
\begin{equation*}
\renewcommand{\arraystretch}{1.2}
a_0^{B_0^{11}} = \left [\begin{array}{rr}
   \frac{16}{3}  & -\frac{4}{3} \\
  -\frac{4}{3}   & \frac{16}{3} \\
\end{array} \right ], \quad
a_{-1}^{B_0^{11}}  =\left [\begin{array}{rr}
   \mbox{\small 0}  & \mbox{\small 0} \\
  -\frac{4}{3}      & \mbox{\small 0} \\
\end{array} \right ], \quad
a_{1}^{B_0^{11}} = \left (a_{-1}^{B_0^{11}}\right )^T,
\end{equation*}

\begin{equation*}
\renewcommand{\arraystretch}{1.2}
a_0^{B_0^{22}} = \left [\begin{array}{rr}
   \frac{16}{3}  & -\frac{4}{3} \\
  -\frac{4}{3}   & \mbox{\small 4} \\
\end{array} \right ], \quad
a_{-1}^{B_0^{22}}  =\left [\begin{array}{rr}
   \mbox{\small 0}  & \mbox{\small 0} \\
  -\frac{4}{3}      & \frac{1}{3} \\
\end{array} \right ], \quad
a_{1}^{B_0^{22}} = \left (a_{-1}^{B_0^{22}}\right )^T,
\end{equation*}

\begin{equation*}
a_0^{B_0^{12}} = -\frac{4}{3}  I_{2}, \quad
%
a_{-1}^{B_0^{12}}  =  a_{1}^{B_0^{12}} =  O_{2},
\end{equation*}
\begin{equation*}
a_0^{B_{-1}^{21}} = -\frac{4}{3}  I_{2}, \quad
%
a_{-1}^{B_{-1}^{21}}  =  a_{1}^{B_{-1}^{21}} =  O_{2},
\end{equation*}
\begin{equation*}
a_0^{B_{-1}^{22}} = \left [\begin{array}{rr}
  0  & 0 \\
  0   & \frac{1}{3} \\
\end{array} \right ], \quad
a_{-1}^{B_{-1}^{21}}  =  a_{1}^{B_{-1}^{21}} =  O_{2}.
\end{equation*}
Thus, once again, just by taking into account that we are now
facing a matrix-valued symbol,  we can easily read the underlying
symbol as  follows:
\begin{equation} \label{eq:simbolotridiag_P2}
\mathbf{f}_{\mathbb{P}_2}(\theta_{1},\theta_{2})=f_{A_0}(\theta_{1})+f_{A_{-1}}(\theta_{1}) e^{-\hat{\imath} \theta_{2}} +f_{A_{1}}(\theta_{1}) e^{\hat{\imath} \theta_{2}},
\end{equation}

with
\begin{eqnarray*}
f_{A_0}(\theta_{1}) & = & \left [\begin{array}{cc}
   f_{B_0^{11}}(\theta_{1})      & f_{B_0^{12}}(\theta_{1}) \\
   f_{(B_0^{12})^T }(\theta_{1}) & f_{B_0^{22}}(\theta_{1}) \\
\end{array} \right ], \
\begin{array}{l}
f_{B_l^{ij}}(\theta_{1}) = a_0^{B_l^{ij}} +a_{-1}^{B_l^{ij}} e^{-\hat{\imath} \theta_{1}}+a_{1}^{B_l^{ij}} e^{\hat{\imath} \theta_{1}},\\
f_{(B_l^{ij})^T}(\theta_{1}) = \overline{f_{B_l^{ij}}(\theta_{1})},
\end{array}
\\%
%
f_{A_{-1}}(\theta_{1}) &=&\left [\begin{array}{cc}
   0      & 0 \\
   f_{B_{-1}^{21}}(\theta_{1}) & f_{B_{-1}^{22}}(\theta_{1}) \\
\end{array} \right ], \quad
f_{A_1}(\theta_{1}) =\left [\begin{array}{cc}
   0      & f_{(B_{-1}^{21})^T}(\theta_{1}) \\
   0      & f_{(B_{-1}^{22})^T}(\theta_{1}) \\
\end{array} \right ].
\end{eqnarray*}

To sum up, we have a matrix-valued symbol $\mathbf{f}_{\mathbb{P}_2}: [-\pi,\pi]^2 \longrightarrow \mathbb{C}^{4\times 4}$ with
\begin{equation}\label{eq:P2symbol}
\mathbf{f}_{\mathbb{P}_2}(\theta_{1},\theta_{2})  =  \left [
\begin{array}{rr|rr}
\alpha                 & -\beta(1+e^{\hat{\imath} \theta_{1}})  & -\beta(1+e^{\hat{\imath} \theta_{2}})  & 0 \\
-\beta(1+e^{-\hat{\imath} \theta_{1}}) & \alpha                 & 0                      & -\beta(1+e^{\hat{\imath} \theta_{2}})\\
\hline
-\beta(1+e^{-\hat{\imath} \theta_{2}}) & 0                     & \alpha                 & -\beta(1+e^{\hat{\imath} \theta_{1}})\\
0                      & -\beta(1+e^{-\hat{\imath} \theta_{2}}) & -\beta(1+e^{-\hat{\imath} \theta_{1}}) & \gamma +\frac{\beta}{2} (\cos(\theta_{1})+\cos(\theta_{2}))\\
\end{array}
\right ]
\end{equation}
with $\alpha=\dfrac{16}{3}$, $\beta=\dfrac{4}{3}$, and $\gamma=4$.\\

Finally, it is worth stressing that the stiffness matrix $A_n(1,\Omega,\mathbb{P}_2)$ is a principal submatrix of  a suitable
permutation of the Toeplitz matrix  $T_n(\mathbf{f}_{\mathbb{P}_2})$
defined according to (\ref{eq:toeplitz_kron}).
 Indeed, the size of the twolevel matrix $A_n=A_n(1,\Omega,\mathbb{P}_2)$ is intrinsically odd both in inner and outer dimensions, (see Theorem \ref{itdsc} and the explanation after equation (\ref{Knr})), while
  $T_n(\mathbf{f}_{\mathbb{P}_2})$ has  even corresponding dimensions: it is
enough  to cut every last row/column in each inner block, together
with the last block with respect rows and columns, in order to
obtain $A_n$ from $T_n(\mathbf{f}_{\mathbb{P}_2})$. In other words $A_n$ is
a special principal submatrix of $T_n(\mathbf{f}_{\mathbb{P}_2})$ according
to the rule given in  Theorem \ref{itdsc}.
As for the permutation we have just to consider the one defined by
ordering nodal values as reported in Figure
\ref{fig:ordinamentoFK_P2}, where internal nodal values are
grouped four by four.
As a consequence the two matrix-sequences $\{T_n(\mathbf{f}_{\mathbb{P}_2})\}_n$ and
$\{A_n(1,\Omega,\mathbb{P}_2)\}_n$  share the same spectral
distribution,   that is, the same spectral symbol
$\mathbf{f}_{\mathbb{P}_2}$,  by invoking  Theorem \ref{extradimensional}.
The following proposition holds.
\begin{Proposition}\label{distr-P_2}
The two matrix-sequences $\{T_n(\mathbf{f}_{\mathbb{P}_2})\}_n$ and
$\{A_n(1,\Omega,\mathbb{P}_2)\}_n$  are spectrally distributed as
$\mathbf{f}_{\mathbb{P}_2}$ in the sense of Definition
\ref{def-distribution}.
\end{Proposition}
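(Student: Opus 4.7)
The plan is to combine two ingredients: (i) the block multilevel Toeplitz distribution theorem applied to $\mathbf{f}_{\mathbb{P}_2}$; and (ii) Theorem \ref{extradimensional} to transfer the distribution from the Toeplitz sequence to the stiffness sequence $\{A_n(1,\Omega,\mathbb{P}_2)\}_n$.

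First, I would verify that the matrix-valued symbol $\mathbf{f}_{\mathbb{P}_2}$ in \eqref{eq:P2symbol} is Hermitian-valued, i.e., $\mathbf{f}_{\mathbb{P}_2}(\theta_1,\theta_2)^*=\mathbf{f}_{\mathbb{P}_2}(\theta_1,\theta_2)$ for almost every $(\theta_1,\theta_2)$: this is immediate by inspection, since $f_{A_1}=f_{A_{-1}}^*$ and the off-diagonal inner blocks of $f_{A_0}$ are conjugate transposes of each other. Thus $\mathbf{f}_{\mathbb{P}_2}\in L^\infty([-\pi,\pi]^2,4)$ is a Hermitian matrix-valued trigonometric polynomial, every $T_n(\mathbf{f}_{\mathbb{P}_2})$ is Hermitian, and the block Szeg\H{o}--Tilli distribution theorem (equivalently, properties \textbf{GLT\,2} and \textbf{GLT\,1} applied to $\{T_n(\mathbf{f}_{\mathbb{P}_2})\}_n$) yields
\[
\{T_n(\mathbf{f}_{\mathbb{P}_2})\}_n\sim_{\sigma,\lambda}\mathbf{f}_{\mathbb{P}_2},
\]
which establishes the first half of the proposition.

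For the second half I would formalize the \emph{``principal submatrix of a suitable permutation''} claim stated in the paragraph preceding the proposition. Let $\Pi_n$ be the permutation matrix that reorders the nodes so that the block lexicographic layout of $T_n(\mathbf{f}_{\mathbb{P}_2})$ becomes the four-by-four Friedrichs-Keller ordering depicted in Figure \ref{fig:ordinamentoFK_P2}. Denoting by $d_n$ the size of $T_n(\mathbf{f}_{\mathbb{P}_2})$ and by $\delta_n$ the size of $A_n(1,\Omega,\mathbb{P}_2)$, let $E_n\in\mathbb{R}^{d_n\times\delta_n}$ be the rectangular selection matrix retaining exactly the indices that survive the deletion of the last row/column of each inner block and of the outermost block row/column. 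Then $E_n^TE_n=I_{\delta_n}$, and setting $P_n:=\Pi_nE_n$ one has $P_n^*P_n=I_{\delta_n}$ together with $P_n^*\,T_n(\mathbf{f}_{\mathbb{P}_2})\,P_n=A_n(1,\Omega,\mathbb{P}_2)$. Since the deleted indices account only for boundary contributions, $d_n-\delta_n=O(\sqrt{d_n})$, so that $\delta_n/d_n\to 1$. Theorem \ref{extradimensional} (applicable because $T_n(\mathbf{f}_{\mathbb{P}_2})$ is Hermitian) then gives $\{A_n(1,\Omega,\mathbb{P}_2)\}_n\sim_{\sigma,\lambda}\mathbf{f}_{\mathbb{P}_2}$, concluding the argument.

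The main obstacle is not analytic but combinatorial: one must verify rigorously that, after the relabeling of Figure \ref{fig:ordinamentoFK_P2}, the assembly of the elemental contributions \eqref{eq:Ael_k2} over type-1 and type-2 triangles of the Friedrichs-Keller mesh coincides entry-by-entry with the claimed principal submatrix of $\Pi_n^*\,T_n(\mathbf{f}_{\mathbb{P}_2})\,\Pi_n$, and that the Dirichlet boundary conditions correspond precisely to the prescribed row/column deletions. This amounts to matching each of the $4\times 4$ block entries of the Toeplitz pattern generated by \eqref{eq:P2symbol} against the appropriate nodal interactions on the uniform triangulation. Once this identification is in place, the two steps above deliver the spectral distribution statement at once.
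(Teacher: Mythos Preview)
Your proposal is correct and follows essentially the same approach as the paper: establish the Toeplitz distribution via the block Szeg\H{o}--Tilli/\textbf{GLT} theory, then transfer it to the stiffness sequence by recognizing $A_n(1,\Omega,\mathbb{P}_2)$ as a principal submatrix of a permutation of $T_n(\mathbf{f}_{\mathbb{P}_2})$ and invoking Theorem~\ref{extradimensional}. You have simply made explicit the construction of $P_n=\Pi_nE_n$ and the size estimate $\delta_n/d_n\to1$ that the paper leaves implicit in the discussion preceding the proposition.
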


As an immediate consequence of Proposition \ref{distr-P_2} we deduce a corollary regarding the clustering and localization of the spectra of
$\{T_n(\mathbf{f}_{\mathbb{P}_2})\}_n$ and $\{A_n(1,\Omega,\mathbb{P}_2)\}_n$.
\begin{Corollary}\label{cor:distr-P_2}
The range of $\mathbf{f}_{\mathbb{P}_2}$ is a weak cluster set for the spectra of the two matrix-sequences $\{T_n(\mathbf{f}_{\mathbb{P}_2})\}_n$ and $\{A_n(1,\Omega,\mathbb{P}_2)\}_n$ in the sense of Definition \ref{def-cluster}. Furthermore, the convex hull of the range of $\mathbf{f}_{\mathbb{P}_2}$ contains all the eigenvalues of the involved matrices.
\end{Corollary}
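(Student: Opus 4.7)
The strategy is to combine Proposition~\ref{distr-P_2}, which provides the spectral distribution $\{T_n(\mathbf{f}_{\mathbb{P}_2})\}_n,\{A_n(1,\Omega,\mathbb{P}_2)\}_n\sim_\lambda \mathbf{f}_{\mathbb{P}_2}$, with two general facts already recorded earlier in the thesis: the implication from $\sim_\lambda$ to weak clustering at the essential range, and the block-Toeplitz eigenvalue localization in terms of the extremal eigenvalues of the Hermitian symbol.

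For the weak clustering part, I would first observe that the entries of $\mathbf{f}_{\mathbb{P}_2}$ in~\eqref{eq:P2symbol} are trigonometric polynomials, so that $\mathbf{f}_{\mathbb{P}_2}$ is continuous and Hermitian-valued on the compact domain $[-\pi,\pi]^2$. By the continuous dependence of eigenvalues of Hermitian matrices on their entries, each eigenvalue function $\lambda_i(\mathbf{f}_{\mathbb{P}_2})$, $i=1,\ldots,4$, is continuous and real-valued, whence $\mathcal{ER}(\lambda_i(\mathbf{f}_{\mathbb{P}_2}))$ coincides with the compact image $\lambda_i(\mathbf{f}_{\mathbb{P}_2})([-\pi,\pi]^2)$. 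Consequently $\mathcal{ER}(\mathbf{f}_{\mathbb{P}_2})$ equals the full range of the eigenvalue map, and the theorem from Section~\ref{ssez:matrix-seq} stating that $\sim_\lambda$ entails weak clustering at $\mathcal{ER}(\mathbf{f})$, together with Proposition~\ref{distr-P_2}, delivers the first assertion for both matrix-sequences.

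For the convex hull localization, the Hermitian character of the symbol forces the range of the eigenvalue map into $\mathbb{R}$, and its convex hull is the interval $[\mathbf{m}_{\mathbf{f}_{\mathbb{P}_2}},\mathbf{M}_{\mathbf{f}_{\mathbb{P}_2}}]$ as defined in Subsection~\ref{bloctoep}. Property~3 in that subsection then yields directly $\Lambda(T_n(\mathbf{f}_{\mathbb{P}_2}))\subseteq[\mathbf{m}_{\mathbf{f}_{\mathbb{P}_2}},\mathbf{M}_{\mathbf{f}_{\mathbb{P}_2}}]$. To transfer this inclusion to $A_n(1,\Omega,\mathbb{P}_2)$, I would recall the remark preceding Proposition~\ref{distr-P_2}: after the nodal reordering indicated there, $A_n(1,\Omega,\mathbb{P}_2)$ is, up to a permutation similarity, a principal submatrix of $T_n(\mathbf{f}_{\mathbb{P}_2})$. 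Since permutations preserve the spectrum and both matrices are Hermitian, Cauchy's interlacing theorem (also recalled in Section~\ref{ssez:matrix-seq}) confines the eigenvalues of the stiffness matrix to the same interval.

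The step most likely to require care is the identification of $\mathcal{ER}(\mathbf{f}_{\mathbb{P}_2})$ with the actual range of its eigenvalue map; although intuitively clear from continuity and compactness, it rests on the existence of a continuous labelling of the four eigenvalue functions, which is the standard perturbation result for Hermitian matrices. Everything else reduces to direct citation of the preparatory material in Chapter~2, so the final proof should be quite short.
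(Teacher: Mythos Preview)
Your proposal is correct and follows essentially the same route as the paper's own proof: Proposition~\ref{distr-P_2} combined with the general clustering theorem (continuity of $\mathbf{f}_{\mathbb{P}_2}$ identifying range with essential range) for the first part, and the Toeplitz localization $\Lambda(T_n(\mathbf{f}_{\mathbb{P}_2}))\subseteq[\mathbf{m}_{\mathbf{f}},\mathbf{M}_{\mathbf{f}}]$ together with the principal-submatrix/interlacing argument for the second. Your closing worry about a continuous eigenvalue labelling is unnecessary: for Hermitian matrices the \emph{ordered} eigenvalue functions $\lambda_1\le\cdots\le\lambda_4$ are automatically continuous in the entries, which is all that is needed to equate the essential range with the image on the compact square.
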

\begin{proof} The proof of the first part is a direct consequence  of Proposition \ref{distr-P_2}, taking into account \cite[Theorem 4.2]{gol-serra} and observing that in this setting the standard range and the essential range coincide since $\mathbf{f}_{\mathbb{P}_2}$ is continuous, (see the subsection \ref{ssez:matrix-seq}). For the second part we observe that the result is known for Toeplitz matrices with Hermitian valued symbols \cite{marko}: then the localization  result for the eigenvalues of $A_n(1,\Omega,\mathbb{P}_2)$ follows, because   $A_n(1,\Omega,\mathbb{P}_2)$ is a principal submatrix of $T_n(\mathbf{f}_{\mathbb{P}_2})$ and since all the involved matrices are Hermitian.
\end{proof}
\begin{figure}
\centering
\includegraphics[width=0.7\textwidth]{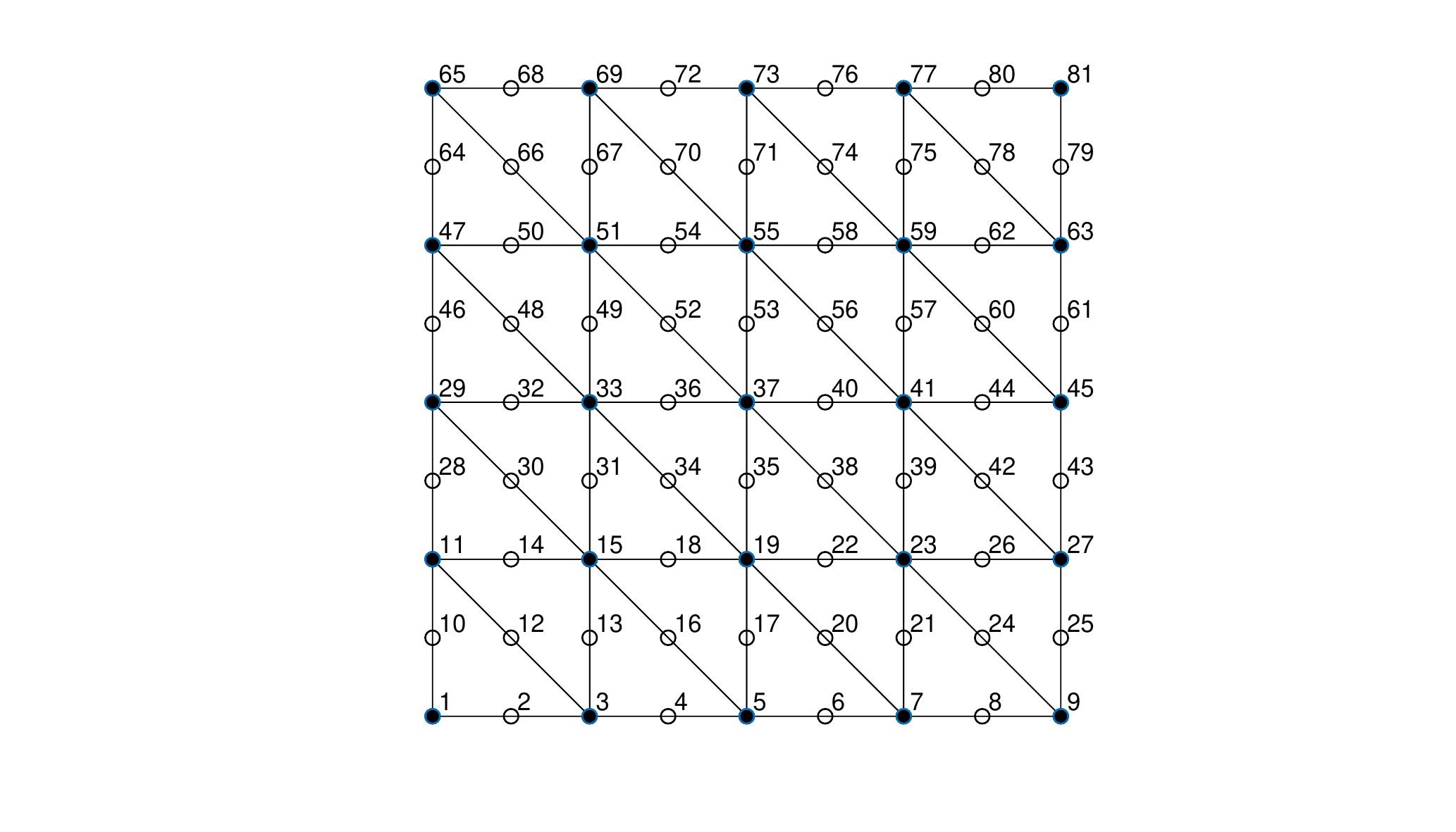}
\includegraphics[width=0.7\textwidth]{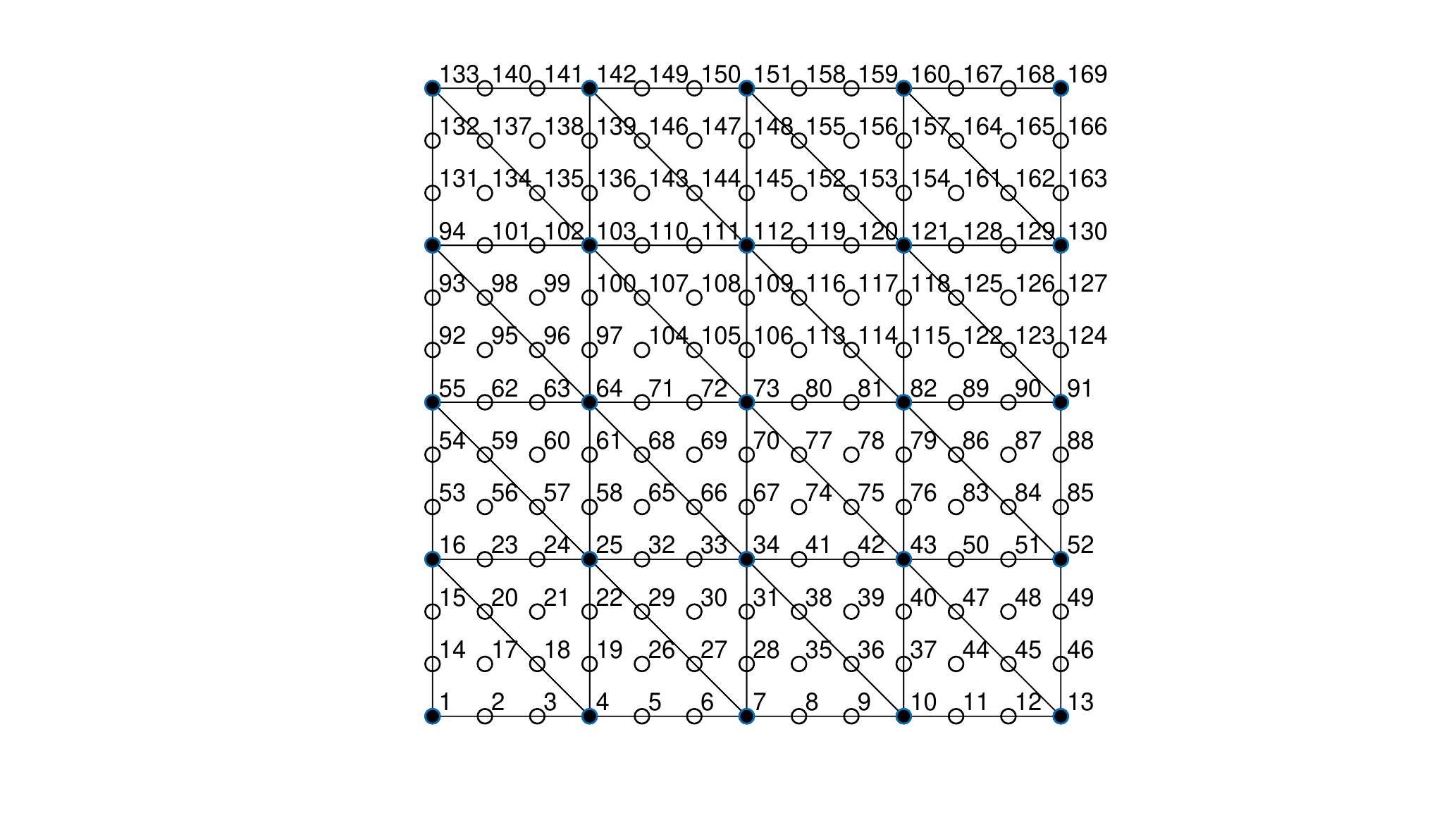}
\caption{Nodal points reordering in $\mathbb{P}_2$ and
$\mathbb{P}_3$ cases.} \label{fig:ordinamentoFK_P2}
\end{figure}
\subsection{Case $k=3$}

In the case of cubic Lagrangian FE ($k=3$), by referring to the reference element, (see Figure \ref{fig:elementodiriferimento}), we have the following shape functions
\begin{eqnarray*}
\hat{\varphi}_1(\hat{x}, \hat{y})&=& - \frac{9}{2}\hat{x}^3 - \frac{27}{2}\hat{x}^2\hat{y} + 9\hat{x}^2 - \frac{27}{2}\hat{x} \hat{y}^2 + 18\hat{x} \hat{y}
                - \frac{11}{2}\hat{x}- \frac{9}{2}\hat{y}^3 + 9\hat{y}^2 - \frac{11}{2}\hat{y} + 1,\nonumber\\
\hat{\varphi}_2(\hat{x}, \hat{y})&=& \frac{\hat{x}}{2} (9\hat{x}^2 - 9\hat{x} + 2),  \nonumber \\
\hat{\varphi}_3(\hat{x}, \hat{y})&=& \frac{\hat{y}}{2} (9\hat{y}^2 - 9\hat{y} + 2), \nonumber \\
\hat{\varphi}_4(\hat{x}, \hat{y})&=& \frac{9}{2}\hat{x} \hat{y}(3\hat{x} - 1), \nonumber\\
\hat{\varphi}_5(\hat{x}, \hat{y})&=& \frac{9}{2}\hat{x} \hat{y}(3\hat{y} - 1),  \label{eq:baseP32D} \\
\hat{\varphi}_6(\hat{x}, \hat{y})&=& -\frac{9}{2}\hat{y} (3\hat{y} - 1)(\hat{x} + \hat{y} - 1),  \nonumber \\
\hat{\varphi}_7(\hat{x}, \hat{y})&=& \frac{9}{2}\hat{y} (3\hat{x}^2 + 6\hat{x} \hat{y} - 5\hat{x} + 3\hat{y}^2 - 5\hat{y} + 2),  \nonumber \\
\hat{\varphi}_8(\hat{x}, \hat{y})&=& \frac{9}{2}\hat{x} (3\hat{x}^2 + 6\hat{x} \hat{y} - 5\hat{x} + 3\hat{y}^2 - 5\hat{y} + 2),  \nonumber \\
\hat{\varphi}_9(\hat{x}, \hat{y})&=& -\frac{9}{2}\hat{x} (3\hat{x} - 1)(\hat{x} + \hat{y} - 1),  \nonumber \\
\hat{\varphi}_{10}(\hat{x}, \hat{y})&=&  -27\hat{x} \hat{y}(\hat{x} + \hat{y} - 1), \nonumber
\end{eqnarray*}
so that, according to (\ref{eq:int_ij}), the
{elemental} matrix for a generic triangle equals
\begin{equation} \label{eq:Ael_k3} \renewcommand{\arraystretch}{1.2}
\!\!\! A_{K_1}^{El} =  \left [\begin{array}{rrrrrrrrrr}
  \frac{17}{20}   & -\frac{7}{80}    & -\frac{7}{80}    & -\frac{3}{40}  & -\frac{3}{40}  &  \frac{3}{8}     & -\frac{51}{80}   & -\frac{51}{80}   &  \frac{3}{8}     & \mbox{\small 0}\\
 -\frac{7}{80}    &  \frac{17}{40}   & \mbox{\small 0}  &  \frac{3}{80}  &  \frac{3}{80}  & -\frac{3}{80}    & -\frac{3}{80}    &  \frac{27}{80}   & -\frac{27}{40}   & \mbox{\small 0}\\
 -\frac{7}{80}    &  \mbox{\small 0} & \frac{17}{40}    &  \frac{3}{80}  &  \frac{3}{80}  & -\frac{27}{40}   &  \frac{27}{80}   & -\frac{3}{80}    & -\frac{3}{80}    & \mbox{\small 0}\\
 -\frac{3}{40}    &  \frac{3}{80}    & \frac{3}{80}     &  \frac{27}{8}  & -\frac{27}{40} &  \frac{27}{80}   &  \frac{27}{80}   &  \frac{27}{80}   & -\frac{27}{16}   & -\frac{81}{40}\\
 -\frac{3}{40}    &  \frac{3}{80}    & \frac{3}{80}     & -\frac{27}{40} &  \frac{27}{8}  & -\frac{27}{16}   &  \frac{27}{80}   &  \frac{27}{80}   &  \frac{27}{80}   & -\frac{81}{40}\\
  \frac{3}{8}     & -\frac{3}{80}    & -\frac{27}{40}   &  \frac{27}{80} & -\frac{27}{16} &  \frac{27}{8}    & -\frac{27}{16}   &  \mbox{\small 0} &  \mbox{\small 0} &  \mbox{\small 0}\\
 -\frac{51}{80}   & -\frac{3}{80}    &  \frac{27}{80}   &  \frac{27}{80} &  \frac{27}{80} & -\frac{27}{16}   &  \frac{27}{8}    &  \mbox{\small 0} &  \mbox{\small 0} & -\frac{81}{40}\\
 -\frac{51}{80}   &  \frac{27}{80}   &  -\frac{3}{80}   &  \frac{27}{80} &  \frac{27}{80} &  \mbox{\small 0} &  \mbox{\small 0} &  \frac{27}{8}    & -\frac{27}{16}   & -\frac{81}{40}\\
  \frac{3}{8}     & -\frac{27}{40}   &  -\frac{3}{80}   & -\frac{27}{16} &  \frac{27}{8}  &  \mbox{\small 0} &  \mbox{\small 0} & -\frac{27}{16}   &  \frac{27}{8}    & \mbox{\small 0}\\
  \mbox{\small 0} &  \mbox{\small 0} &  \mbox{\small 0} & -\frac{81}{40} & -\frac{81}{40} &  \mbox{\small 0} & -\frac{81}{40}   & -\frac{81}{40}   &  \mbox{\small 0} & \frac{81}{10}\\
\end{array} \right ]
\end{equation}
in the case of triangles of type 1, or a suitable permutation in the case of triangles of type 2.
The stiffness matrix $A_n=A_n(1,\Omega,\mathbb{P}_3)$ shows again
a block tridiagonal structure
$A_n=\mathrm{tridiag}(A_1,A_0,A_{-1})$ as in previous cases, the
higher approximation stressing its influence just inside the
blocks $A_i$. In fact, we observe in each block $A_i$ a $3\times
3$ block structure as  follows:
\begin{equation}
A_0=\left [\begin{array}{ccc}
   B_0^{11}      & B_0^{12} & B_0^{13}\\
   (B_0^{12})^T  & B_0^{22} & B_0^{23}\\
   (B_0^{13})^T  & B_0^{23} & B_0^{33}\\
\end{array} \right ], \quad
A_{-1}=\left [\begin{array}{ccc}
   0 & 0 & 0\\
   0 & 0 & 0\\
   B_{-1}^{31}  & B_{-1}^{32} & B_{-1}^{33}\\
\end{array} \right ], \quad A_{1} = A_{-1}^T.
\end{equation}

More important, the very same structure  appears in every block $B_l^{ij}$ by considering suitable $3\times 3$ matrices and
indeed we have
\begin{equation*}
B_l^{ij}=\mathrm{tridiag}\left(a_{1}^{B_l^{ij}},a_0^{B_l^{ij}},a_{-1}^{B_l^{ij}}
\right ), \quad l\in \{-1, 0, 1\}, \ i,j \in \{1, 2, 3\},
\end{equation*}
where
\begin{equation*}\renewcommand{\arraystretch}{1.2}
a_0^{B_0^{11}} = \left [\begin{array}{rrr}
   \frac{81}{10}   & -\frac{81}{40} & \mbox{\small 0} \\
  -\frac{81}{40}   & \frac{27}{4}   & -\frac{27}{16}\\
  \mbox{\small 0}  & -\frac{27}{16} & \frac{27}{4}\\
\end{array} \right ], \quad
a_{-1}^{B_0^{11}}  =\left [\begin{array}{rrr}
   \mbox{\small 0}  & \mbox{\small 0} & \mbox{\small 0}\\
   \mbox{\small 0}  & \mbox{\small 0} & \mbox{\small 0}\\
  -\frac{81}{40} & \frac{27}{80} & \mbox{\small 0} \\
\end{array} \right ], \quad
a_{1}^{B_0^{11}} = \left (a_{-1}^{B_0^{11}}\right )^T,
\end{equation*}
\begin{equation*}\renewcommand{\arraystretch}{1.2}
a_0^{B_0^{22}} = \left [\begin{array}{rrr}
  \frac{27}{4}   & -\frac{81}{40} &  \frac{27}{80}\\
 -\frac{81}{40}  &  \frac{81}{10}& -\frac{81}{40}\\
  \frac{27}{80}  & -\frac{81}{40} & \frac{27}{4}\\
\end{array} \right ], \quad
a_{-1}^{B_0^{22}}  =\left [\begin{array}{rrr}
   \mbox{\small 0}  & \mbox{\small 0} & \mbox{\small 0}\\
   \mbox{\small 0}  & \mbox{\small 0} & \mbox{\small 0}\\
   -\frac{27}{16} & 0 & 0 \\
\end{array} \right ], \quad
a_{1}^{B_0^{22}} = \left (a_{-1}^{B_0^{22}}\right )^T,
\end{equation*}
\begin{equation*}\renewcommand{\arraystretch}{1.2}
a_0^{B_0^{33}} = \left [\begin{array}{rrr}
  \frac{27}{4} &  -\frac{27}{8} & \frac{57}{80} \\
 -\frac{27}{8} &  \frac{27}{4}  & -\frac{21}{16}\\
 \frac{57}{80} &  -\frac{21}{16}& \frac{17}{5}\\
\end{array} \right ], \quad
a_{-1}^{B_0^{33}}  =\left [\begin{array}{rrr}
   \mbox{\small 0}  & \mbox{\small 0} & \mbox{\small 0}\\
   \mbox{\small 0}  & \mbox{\small 0} & \mbox{\small 0}\\
   -\frac{21}{16} &  \frac{57}{80} &  -\frac{7}{40} \\
\end{array} \right ], \quad
a_{1}^{B_0^{33}} = \left (a_{-1}^{B_0^{33}}\right )^T,
\end{equation*}
\begin{equation*}\renewcommand{\arraystretch}{1.2}
a_0^{B_0^{12}} = \left [\begin{array}{rrr}
     -\frac{81}{40}  & \mbox{\small 0} & \mbox{\small 0} \\
     -\frac{27}{20}  & -\frac{81}{40} &  \frac{27}{80}\\
      \frac{27}{80}  &  \mbox{\small 0} & -\frac{27}{8}\\
\end{array} \right ], \quad
a_{-1}^{B_0^{12}}  =\left [\begin{array}{rrr}
   \mbox{\small 0}  & \mbox{\small 0} & \mbox{\small 0}\\
   \mbox{\small 0}  & \mbox{\small 0} & \mbox{\small 0}\\
   \frac{27}{80} & \mbox{\small 0}  & \mbox{\small 0} \\
\end{array} \right ], \quad
a_{1}^{B_0^{12}} = \left [\begin{array}{rrr}
   \mbox{\small 0}  & \mbox{\small 0} & \mbox{\small 0}\\
   \mbox{\small 0}  & \mbox{\small 0} & \frac{27}{80}\\
   \mbox{\small 0}  & \mbox{\small 0} & \mbox{\small 0}   \\
\end{array} \right ],
\end{equation*}
\begin{equation*}\renewcommand{\arraystretch}{1.2}
a_0^{B_0^{13}} = \left [\begin{array}{rrr}
   \mbox{\small 0}             & \mbox{\small 0} &  \mbox{\small 0}\\
   \frac{27}{80} & \frac{27}{80} & -\frac{3}{40}\\
   \mbox{\small 0}             & \mbox{\small 0}             & \frac{57}{80}\\
\end{array} \right ], \quad
a_{-1}^{B_0^{13}}  = O_{3}, \quad a_{1}^{B_0^{13}} = \left
[\begin{array}{rrr}
  \mbox{\small 0}  & \mbox{\small 0} & \mbox{\small 0}\\
   \mbox{\small 0}  & \mbox{\small 0} & \frac{3}{40}\\
   \mbox{\small 0}  & \mbox{\small 0} & -\frac{3}{80} \\
\end{array} \right ],
\end{equation*}
\begin{equation*}\renewcommand{\arraystretch}{1.2}
a_0^{B_0^{23}} = \left [\begin{array}{rrr}
   -\frac{27}{16} &  \frac{27}{80} & -\frac{3}{40} \\
   \mbox{\small 0}              & -\frac{81}{40} & \mbox{\small 0} \\
   \mbox{\small 0}             &  \mbox{\small 0} & -\frac{21}{16}\\
\end{array} \right ], \quad
a_{-1}^{B_0^{23}}  = O_{3}, \quad a_{1}^{B_0^{23}} = \left
[\begin{array}{rrr}
   \mbox{\small 0}  & \mbox{\small 0} & \frac{3}{40}\\
   \mbox{\small 0}  & \mbox{\small 0} & \mbox{\small 0}\\
   \mbox{\small 0}  & \mbox{\small 0} & -\frac{3}{80} \\
\end{array} \right ],
\end{equation*}
\begin{equation*}\renewcommand{\arraystretch}{1.2}
a_0^{B_{-1}^{31}} = \left [\begin{array}{rrr}
   -\frac{81}{40}   &  \frac{27}{80}  &  \mbox{\small 0} \\
  \mbox{\small 0}                &  -\frac{27}{16} &  \mbox{\small 0} \\
   \mbox{\small 0}                &  \frac{3}{40}   & -\frac{21}{16}\\
\end{array} \right ], \quad
a_{-1}^{B_{-1}^{31}}  =\left [\begin{array}{rrr}
   \mbox{\small 0}  & \mbox{\small 0} & \mbox{\small 0}\\
   \mbox{\small 0}  & \mbox{\small 0} & \mbox{\small 0}\\
   \mbox{\small 0}  & -\frac{3}{40} & \mbox{\small 0}  \\
\end{array} \right ], \quad
a_{1}^{B_{-1}^{31}} = \left [\begin{array}{rrr}
   \mbox{\small 0}  & \mbox{\small 0} & \mbox{\small 0}\\
   \mbox{\small 0}  & \mbox{\small 0} & \mbox{\small 0}\\
   \mbox{\small 0}  & \mbox{\small 0} & -\frac{3}{80} \\
\end{array} \right ],
\end{equation*}
\begin{equation*}\renewcommand{\arraystretch}{1.2}
a_0^{B_{-1}^{32}} = \left [\begin{array}{rrr}
   \frac{27}{80}  & \mbox{\small 0} &  \mbox{\small 0} \\
   \frac{27}{80}  & \mbox{\small 0} &  \mbox{\small 0}\\
   \frac{3}{40}   & \mbox{\small 0} & \frac{57}{80}\\
\end{array} \right ], \quad
a_{-1}^{B_{-1}^{32}}  =\left [\begin{array}{rrr}
   \mbox{\small 0}  & \mbox{\small 0} & \mbox{\small 0}\\
   \mbox{\small 0}  & \mbox{\small 0} & \mbox{\small 0}\\
  - \frac{3}{40}& \mbox{\small 0} &  \mbox{\small 0}\\
\end{array} \right ], \quad
a_{1}^{B_{-1}^{32}} = \left [\begin{array}{rrr}
   \mbox{\small 0}  & \mbox{\small 0} & \mbox{\small 0}\\
   \mbox{\small 0}  & \mbox{\small 0} & \mbox{\small 0}\\
   \mbox{\small 0}  & \mbox{\small 0} & -\frac{3}{80}  \\
\end{array} \right ],
\end{equation*}
\begin{equation*}\renewcommand{\arraystretch}{1.2}
a_0^{B_{-1}^{33}} = \left [\begin{array}{rrr}
   \mbox{\small 0}  & \mbox{\small 0} & \mbox{\small 0}\\
   \mbox{\small 0}  & \mbox{\small 0} & \mbox{\small 0}\\
   -\frac{3}{80}  & -\frac{3}{80} & -\frac{7}{40}\\
\end{array} \right ], \quad
a_{-1}^{B_{-1}^{33}}  = O_{3}, \quad a_{1}^{B_{-1}^{33}} = \left
[\begin{array}{rrr}
   \mbox{\small 0}  & \mbox{\small 0} & -\frac{3}{80}\\
   \mbox{\small 0}  & \mbox{\small 0} & -\frac{3}{80}\\
   \mbox{\small 0}  & \mbox{\small 0} & \mbox{\small 0} \\
\end{array} \right ].
\end{equation*}
Thus, once again, just by taking into account that we are now facing a
matrix-valued symbol,  we can easily read the underlying symbol as
 follows:
 \begin{equation} \label{eq:simbolotridiag_P3}
\mathbf{f}_{\mathbb{P}_3}(\theta_{1},\theta_{2})=f_{A_0}(\theta_{1})+f_{A_{-1}}(\theta_{1}) e^{-\hat{\imath} \theta_{2}} +f_{A_{1}}(\theta_{1}) e^{\hat{\imath} \theta_{2}},
\end{equation}
with
\begin{eqnarray*}
f_{A_0}(\theta_{1}) & = & \left [\begin{array}{ccc}
   f_{B_0^{11}}(\theta_{1})      & f_{B_0^{12}}(\theta_{1})     & f_{B_0^{13}}(\theta_{1})\\
   f_{(B_0^{12})^T }(\theta_{1}) & f_{B_0^{22}}(\theta_{1})     & f_{B_0^{23}}(\theta_{1})\\
   f_{(B_0^{13})^T }(\theta_{1}) & f_{(B_0^{23})^T}(\theta_{1}) & f_{B_0^{33}}(\theta_{1})\\
\end{array} \right ], \
\begin{array}{l}
f_{B_l^{ij}}(\theta_{1}) = a_0^{B_l^{ij}} +a_{-1}^{B_l^{ij}} e^{-\hat{\imath} \theta_{1}}+a_{1}^{B_l^{ij}} e^{\hat{\imath} \theta_{1}},\\
f_{(B_l^{ij})^T}(\theta_{1}) = \overline{f_{B_l^{ij}}(\theta_{1})},
\end{array}
\\%
%
f_{A_{-1}}(\theta_{1}) &=&\left [\begin{array}{ccc}
   0      & 0 & 0\\
   0      & 0 & 0\\
   f_{B_{-1}^{31}}(\theta_{1}) & f_{B_{-1}^{32}}(\theta_{1}) & f_{B_{-1}^{33}}(\theta_{1}) \\
\end{array} \right ], \quad
f_{A_1}(\theta_{1}) =\left [\begin{array}{ccc}
   0  & 0  & f_{(B_{-1}^{31})^T}(\theta_{1}) \\
   0  & 0  & f_{(B_{-1}^{32})^T}(\theta_{1}) \\
   0  & 0  & f_{(B_{-1}^{33})^T}(\theta_{1}) \\
\end{array} \right ].
\end{eqnarray*}
To sum up, we find the expression of  $\mathbf{f}_{\mathbb{P}_3}: [-\pi,\pi]^2 \longrightarrow \mathbb{C}^{9\times 9}$ with
\begin{equation}\label{eq:P3symbol_c} \renewcommand{\arraystretch}{.2}
\hspace{-2.5cm}\mathbf{f}_{\mathbb{P}_3}(\theta_{1},\theta_{2})  = { \footnotesize \left [
\begin{array}{rrr|rrr|rrr}
\alpha & -\frac{\alpha}{4} & -\frac{\alpha}{4}e^{\hat{\imath} \theta_{1}} & -\frac{\alpha}{4} & 0 & 0 & -\frac{\alpha}{4}e^{\hat{\imath} \theta_{2}} & 0 & 0 \\
-\frac{\alpha}{4} & \beta & -\frac{\beta}{4}+\frac{\beta}{20}e^{\hat{\imath} \theta_{1}} & -\frac{\beta}{5} & -\frac{\alpha}{4}
     & \frac{\beta}{20}(1+e^{\hat{\imath} \theta_{1}}) & \frac{\beta}{20}(1+e^{\hat{\imath} \theta_{2}})&  \frac{\beta}{20}-\frac{\beta}{4}e^{\hat{\imath} \theta_{2}} & f_{29}\\
 -\frac{\alpha}{4}e^{-\hat{\imath} \theta_{1}} & -\frac{\beta}{4}+\frac{\beta}{20}e^{-\hat{\imath} \theta_{1}} & \beta & \frac{\beta}{20}(1+e^{-\hat{\imath} \theta_{1}}) & 0 & -\frac{\beta}{2} & 0 & 0
     & f_{39}\\
\hline
-\frac{\alpha}{4} &  -\frac{\beta}{5}&  \frac{\beta}{20}(1+e^{\hat{\imath} \theta_{1}}) & \beta & -\frac{\alpha}{4} & \frac{\beta}{20}-\frac{\beta}{4}e^{\hat{\imath} \theta_{1}}
     & -\frac{\beta}{4}+\frac{\beta}{20}e^{\hat{\imath} \theta_{2}} & \frac{\beta}{20}(1+e^{\hat{\imath} \theta_{2}}) & f_{49} \\
0 & -\frac{\alpha}{4} & 0  & -\frac{\alpha}{4} & \alpha & -\frac{\alpha}{4} & 0 & -\frac{\alpha}{4} & 0\\
0 & \frac{\beta}{20}(1+e^{-\hat{\imath} \theta_{1}})  & -\frac{\beta}{2}& \frac{\beta}{20}-\frac{\beta}{4}e^{-\hat{\imath} \theta_{1}} & -\frac{\alpha}{4}  & \beta
     & 0 & 0 & f_{69}\\
\hline
-\frac{\alpha}{4}e^{-\hat{\imath} \theta_{2}} & \frac{\beta}{20}(1+e^{-\hat{\imath} \theta_{2}}) & 0 & -\frac{\beta}{4}+\frac{\beta}{20}e^{-\hat{\imath} \theta_{2}}& 0 & 0 & \beta & -\frac{\beta}{2}
     & f_{79}\\
0 & \frac{\beta}{20}-\frac{\beta}{4}e^{-\hat{\imath} \theta_{2}} & 0 & \frac{\beta}{20}(1+e^{-\hat{\imath} \theta_{2}}) & -\frac{\alpha}{4} & 0 & -\frac{\beta}{2}& \beta
     & f_{89}\\
0 & \overline{f_{29}} & \overline{f_{39}} & \overline{f_{49}} & 0 & \overline{f_{69}} & \overline{f_{79}} & \overline{f_{89}} & f_{99}\\
\end{array}
\right ]}
\end{equation}
where
\[\renewcommand{\arraystretch}{1.4}
\begin{array}{ll}
f_{29}= -\gamma(1-e^{\hat{i} \theta_{1}})(1-e^{\hat{i} \theta_{2}}), & f_{39}=  \delta -\frac{\gamma}{2}e^{\hat{i} \theta_{1}}-\varepsilon e^{\hat{i} \theta_{2}} -\frac{\gamma}{2}e^{-\hat{i} \theta_{1}}e^{\hat{i} \theta_{2}},\\
f_{49}= -\gamma(1-e^{\hat{i} \theta_{1}})(1-e^{\hat{i} \theta_{2}}), & f_{69}= -\varepsilon -\frac{\gamma}{2}e^{\hat{i} \theta_{1}}+\delta e^{\hat{i} \theta_{2}} -\frac{\gamma}{2}e^{-\hat{i} \theta_{1}}e^{\hat{i} \theta_{2}},\\
f_{79}= \delta -\varepsilon e^{\hat{i} \theta_{1}}-\frac{\gamma}{2}e^{\hat{i} \theta_{1}}e^{-\hat{i} \theta_{2}}-\frac{\gamma}{2}e^{\hat{i} \theta_{2}},  &   f_{89}= -\varepsilon +\delta e^{\hat{i} \theta_{1}}-\frac{\gamma}{2}e^{\hat{i} \theta_{1}}e^{-\hat{i} \theta_{2}}-\frac{\gamma}{2}e^{\hat{i} \theta_{2}},\\
f_{99}= \zeta -2\eta (\cos(\theta_{1})+\cos(\theta_{2})),\\
\end{array}
\]
 and $\alpha={81}/{10}$, $\beta={27}/{4}$, $\gamma={3}/{40}$,
$\delta={57}/{80}$, $\varepsilon={21}/{16}$,
$\zeta={17}/{5}$, $\eta={7}/{40}$.\\
Finally, the stiffness matrix $A_n=A_n(1,\Omega,\mathbb{P}_3)$ is
a principal submatrix of a suitable permutation of the Toeplitz
matrix $T_n(\mathbf{f}_{\mathbb{P}_3})$. In order to obtain the stiffness
matrix $A_n$ from $T_n(\mathbf{f}_{\mathbb{P}_3})$ it is enough to group
internal nodal values nine by nine. Again by referring to Theorem
\ref{extradimensional}, the following proposition holds.
\begin{Proposition}\label{distr-P_3}
The two matrix-sequences $\{T_n(\mathbf{f}_{\mathbb{P}_3})\}_n$ and $\{A_n(1,\Omega,\mathbb{P}_3)\}_n$  are spectrally distributed as
$\mathbf{f}_{\mathbb{P}_3}$ in the sense of Definition \ref{def-distribution}.
\end{Proposition}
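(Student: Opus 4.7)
The plan is to mirror almost verbatim the argument that established Proposition~\ref{distr-P_2}, since the whole construction of the symbol $\mathbf{f}_{\mathbb{P}_3}$ in (\ref{eq:simbolotridiag_P3})--(\ref{eq:P3symbol_c}) was designed precisely so that the same reduction works. The proposition splits into two independent statements: the distribution of the pure block Toeplitz sequence, and the transfer of this distribution to the stiffness sequence.

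First I would handle $\{T_n(\mathbf{f}_{\mathbb{P}_3})\}_n\sim_{\sigma,\lambda}\mathbf{f}_{\mathbb{P}_3}$. The matrix-valued symbol $\mathbf{f}_{\mathbb{P}_3}\colon[-\pi,\pi]^2\to\mathbb{C}^{9\times 9}$ written explicitly in (\ref{eq:P3symbol_c}) is trigonometric polynomial hence continuous, and therefore belongs to $L^1([-\pi,\pi]^2,9)$; moreover it is Hermitian almost everywhere, as can be read off directly from the explicit expression or inferred from the fact that each Toeplitz block is built from the symmetric elemental matrix (\ref{eq:Ael_k3}). By \textbf{GLT\,2} one has $\{T_n(\mathbf{f}_{\mathbb{P}_3})\}_n\sim_{\rm GLT}\boldsymbol\kappa(\mathbf{x},\boldsymbol\theta)=\mathbf{f}_{\mathbb{P}_3}(\boldsymbol\theta)$, and since each $T_n(\mathbf{f}_{\mathbb{P}_3})$ is Hermitian, property \textbf{GLT\,1} yields both the singular value and the eigenvalue distribution described by $\mathbf{f}_{\mathbb{P}_3}$.

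Second I would identify $A_n(1,\Omega,\mathbb{P}_3)$ as a principal submatrix of a suitable permutation of $T_n(\mathbf{f}_{\mathbb{P}_3})$. Using the nodal reordering of Figure~\ref{fig:ordinamentoFK_P2} (internal nodal values grouped nine by nine), the block-tridiagonal pattern displayed between (\ref{eq:Ael_k3}) and (\ref{eq:simbolotridiag_P3}) exactly reproduces the $9\times 9$-block Toeplitz structure generated by $\mathbf{f}_{\mathbb{P}_3}$, except that boundary nodes (those touching $\partial\Omega$, killed by the Dirichlet condition) force the deletion of the last row/column of certain inner blocks and of the last outer block, in the same fashion described in Theorem~\ref{itdsc}. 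Let $d_n=9n^2$ be the size of $T_n(\mathbf{f}_{\mathbb{P}_3})$ and let $\delta_n$ be the size of $A_n(1,\Omega,\mathbb{P}_3)$; the number of deleted rows/columns is $O(n)$, so $\delta_n/d_n\to 1$ as $n\to\infty$. Writing $P_n\in\mathbb{C}^{d_n\times\delta_n}$ for the rectangular matrix composed of the permutation and the selection of the retained indices, one has $P_n^*P_n=I_{\delta_n}$ and $A_n(1,\Omega,\mathbb{P}_3)=P_n^*T_n(\mathbf{f}_{\mathbb{P}_3})P_n$ (up to similarity).

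Third, Theorem~\ref{extradimensional} applies: $T_n(\mathbf{f}_{\mathbb{P}_3})$ is Hermitian, the size condition $\delta_n/d_n\to 1$ is satisfied, and $\{T_n(\mathbf{f}_{\mathbb{P}_3})\}_n\sim_{\sigma,\lambda}\mathbf{f}_{\mathbb{P}_3}$ was proved in the first step; thus the same distribution transfers to $\{P_n^*T_n(\mathbf{f}_{\mathbb{P}_3})P_n\}_n=\{A_n(1,\Omega,\mathbb{P}_3)\}_n$.

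The only nontrivial point is the second step: verifying that, after grouping nodes nine by nine, the selection/deletion pattern on $T_n(\mathbf{f}_{\mathbb{P}_3})$ precisely reproduces $A_n(1,\Omega,\mathbb{P}_3)$. This is pure bookkeeping based on the explicit blocks $a_0^{B_l^{ij}},a_{\pm 1}^{B_l^{ij}}$ listed above and on the Dirichlet condition, so it is not a substantive obstacle but it is the step that consumes all the structural information assembled in the preceding pages.
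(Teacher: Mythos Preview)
Your proposal is correct and follows exactly the approach the paper itself takes: the paper simply states ``Again by referring to Theorem~\ref{extradimensional}, the following proposition holds,'' relying on the same mechanism already spelled out for Proposition~\ref{distr-P_2}. Your write-up is in fact more explicit than the paper, which leaves the Toeplitz distribution (your \textbf{GLT\,1}--\textbf{GLT\,2} step) and the size-ratio check $\delta_n/d_n\to1$ implicit.
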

As an immediate consequence of Proposition \ref{distr-P_3} we
deduce a corollary regarding the clustering  and localization of
the spectra of $\{T_n(\mathbf{f}_{\mathbb{P}_3})\}_n$ and
$\{A_n(1,\Omega,\mathbb{P}_3)\}_n$ as well.
\begin{Corollary}\label{cor:distr-P_3}
The range of $\mathbf{f}_{\mathbb{P}_3}$ is a weak cluster set for the spectra of the two matrix-sequences $\{T_n(\mathbf{f}_{\mathbb{P}_3})\}_n$ and $\{A_n(1,\Omega,\mathbb{P}_3)\}_n$ in the sense of Definition \ref{def-cluster}. Furthermore, the convex hull of the range of $\mathbf{f}_{\mathbb{P}_3}$ contains all the eigenvalues of the involved matrices.
\end{Corollary}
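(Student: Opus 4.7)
My plan is to mirror the argument used for Corollary~\ref{cor:distr-P_2}, since the structural setup for $\mathbb{P}_3$ is completely analogous to the one for $\mathbb{P}_2$: we have a Hermitian matrix-valued trigonometric polynomial $\mathbf{f}_{\mathbb{P}_3}$, and the stiffness matrix $A_n(1,\Omega,\mathbb{P}_3)$ is, up to a suitable permutation, a principal submatrix of $T_n(\mathbf{f}_{\mathbb{P}_3})$ obtained by cutting the last row/column in each inner block together with the last block row/column, as explained after equation~\eqref{eq:P3symbol_c}.

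For the first part, I would invoke Proposition~\ref{distr-P_3}, which already gives $\{T_n(\mathbf{f}_{\mathbb{P}_3})\}_n\sim_\lambda \mathbf{f}_{\mathbb{P}_3}$ and $\{A_n(1,\Omega,\mathbb{P}_3)\}_n\sim_\lambda \mathbf{f}_{\mathbb{P}_3}$. Then I would apply \cite[Theorem~4.2]{gol-serra}, which states that a spectral distribution implies weak clustering at the essential range of the symbol in the sense of Definition~\ref{def-cluster}. The key observation is that $\mathbf{f}_{\mathbb{P}_3}$ is a trigonometric polynomial, hence continuous on $[-\pi,\pi]^2$ (which is the closure of its interior), so by the remark in Subsection~\ref{ssez:matrix-seq} the essential range coincides with the ordinary range $\mathcal{ER}(\mathbf{f}_{\mathbb{P}_3})=\overline{\bigcup_{i=1}^{9}\lambda_i(\mathbf{f}_{\mathbb{P}_3}([-\pi,\pi]^2))}$. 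This yields the weak cluster statement for both matrix-sequences.

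For the second part, I would use the classical inclusion result for block Toeplitz matrices with Hermitian matrix-valued symbols \cite{marko}: all eigenvalues of $T_n(\mathbf{f}_{\mathbb{P}_3})$ lie in the convex hull of the range of $\mathbf{f}_{\mathbb{P}_3}$. To transfer this to $A_n(1,\Omega,\mathbb{P}_3)$, I would exploit the fact that, after the reordering of nodal points illustrated in Figure~\ref{fig:ordinamentoFK_P2} (grouped nine by nine), $A_n(1,\Omega,\mathbb{P}_3)$ is a Hermitian principal submatrix of the Hermitian matrix $T_n(\mathbf{f}_{\mathbb{P}_3})$; Cauchy's interlacing theorem (stated earlier in the preliminaries) then forces the eigenvalues of $A_n(1,\Omega,\mathbb{P}_3)$ to lie in the interval $[\lambda_{\min}(T_n(\mathbf{f}_{\mathbb{P}_3})),\lambda_{\max}(T_n(\mathbf{f}_{\mathbb{P}_3}))]$ and, more generally, in the convex hull of the range of $\mathbf{f}_{\mathbb{P}_3}$.

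The main obstacle, if any, is a formal rather than conceptual one: one must carefully check that the Hermitianity of $\mathbf{f}_{\mathbb{P}_3}$ as displayed in \eqref{eq:P3symbol_c} (with the off-diagonal entries $f_{29},\ldots,f_{89}$ and their conjugates arranged consistently) indeed makes $T_n(\mathbf{f}_{\mathbb{P}_3})$ Hermitian, so that \cite{marko} applies verbatim. Since $A_n(1,\Omega,\mathbb{P}_3)$ is manifestly symmetric (coming from a symmetric bilinear form with $a\equiv 1$), the only verification needed is on the symbol side, and this is an inspection of \eqref{eq:simbolotridiag_P3}; no further analytical work is required.
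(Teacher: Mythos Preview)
Your proposal is correct and follows essentially the same approach as the paper: invoke Proposition~\ref{distr-P_3} together with \cite[Theorem~4.2]{gol-serra} and the continuity of $\mathbf{f}_{\mathbb{P}_3}$ for the weak cluster part, and then use \cite{marko} for the Toeplitz localization combined with the principal submatrix/Hermitian structure for $A_n(1,\Omega,\mathbb{P}_3)$. Your explicit mention of Cauchy's interlacing theorem just makes precise what the paper leaves implicit in the phrase ``since all the involved matrices are Hermitian''.
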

\begin{proof}
The proof of the first part is a direct consequence  of Proposition \ref{distr-P_3}, taking into account \cite[Theorem 4.2]{gol-serra} and observing that in this setting the standard range and the essential range coincide since $\mathbf{f}_{\mathbb{P}_3}$ is continuous, (see also the Subsection \ref{ssez:matrix-seq}). For the second part we observe that the result is known for Toeplitz matrices with Hermitian valued symbols \cite{marko}: then the localization  result for the eigenvalues of $A_n(1,\Omega,\mathbb{P}_3)$ follows, because   $A_n(1,\Omega,\mathbb{P}_3)$ is a principal submatrix of $T_n(\mathbf{f}_{\mathbb{P}_3})$ and since all the involved matrices are Hermitian.
The thesis follows with the same reasoning considered in Corollary
\ref{cor:distr-P_2}.
\end{proof}
\section{Symbol spectral analysis}\label{sez:SA-Pk-2D}
We start the spectral analysis of symbols obtained in the previous section from a numerical point of view. As well known,
$\mathbf{f}_{\mathbb{P}_1}$ shows a  zero of order 2 in $(0,0)$, while it is positive elsewhere. 
{In the case $k\ge 2$ the symbol is a matrix-valued function, so
we consider an equispaced sampling in $[-\pi,\pi]^2$ of the symbol
and for each point we evaluate the $k^2$   eigenvalues, ordering
them in  non-decreasing way. {Thus,  $k^2$ surfaces are defined by $s_i$,
$i=1,\ldots,k^2$ and the $i^{th}$ eigenvalue in a given point of the sampling is given by the evaluation
of the surface $s_i$ in such a point}. In Table \ref{tab:minmaxPk}  the minimal and
maximal values of each surface $s_i$, $i=1,\ldots,k^2$, are
reported (for a comparison among the surfaces obtained by using
the eigenvalues of the considered matrix-sequence and the
corresponding surfaces obtained by properly sampling the symbol of
the same matrix-sequence, see the subsequent Figures
\ref{fig:es0}-\ref{fig:es3}).
\begin{table}
\small \centering
\scalebox{0.85}{\begin{tabular}{|c|cc|cc|}
\hline
$i$ & $\min(s_i)$ & $\mathrm{argmin}(s_i)$ & $\max(s_i)$  & $\mathrm{argmax}(s_i)$\\
\hline
\multicolumn{5}{|c|}{$k=1$} \\
\hline
1 & 0 & $(0,0)$ & 8 & $(-\pi,-\pi)$ \\
\hline
\multicolumn{5}{|c|}{$k=2$} \\
\hline
1   & -2.122988181725368e-17 & $(0,0)$       &  2.666666666666667e+00 & $(-\pi,-\pi)$\\
2   &  2.666666666666667e+00 & $(0,-\pi)$    &  5.333333333333330e+00 & $(0,0)$\\
3   &  5.333333333333325e+00 & $(-u,-u)$     &  7.415403750411773e+00 & $(0,-\pi)$ \\
4   &  5.333333333333333e+00 & $(-\pi,-\pi)$ &  1.066666666666667e+01 & $(0,0)$ \\
\hline
\multicolumn{5}{|c|}{$k=3$} \\
\hline
1   & -2.947870832408496e-16  & $(0,0)$       &   1.752299219210445e+00 & $(-\pi,-\pi)$ \\
2   &  1.077001420967619e+00  & $(-\pi,0)$    &   2.649326100400095e+00 & $(0,0)$ \\
3   &  2.024999999999999e+00  & $(\pi,-\pi)$  &   3.374999999999998e+00 & $(0,0)$ \\
4   &  2.417725227846304e+00  & $(-\pi,-\pi)$ &   5.473900873539699e+00 & $(0,0)$ \\
5   &  6.074999999999998e+00  & $(0,0)$       &   8.150826984062711e+00 & $(-v,v)$\\
6   &  6.075000000000001e+00  & $(0,0)$       &   9.450000000000005e+00 & $(-\pi,-\pi)$\\
7   &  8.100000000000000e+00  & $(-\pi,0)$    &   1.145177302606021e+01 & $(0,0)$ \\
8   &  1.012500000000000e+01  & $(-\pi,-\pi)$ &   1.306461248424784e+01 & $(-\pi,0)$\\
9   &  1.215000000000001e+01  & $(0,0)$       &   1.542003087979332e+01 & $(-\pi,-\pi)$\\
\hline
\multicolumn{5}{|c|}{$k=4$} \\
\hline
1   & 8.665811124242140e-15  &$(0,0)$        &   1.154132889535501e+00 & $(\pi,-\pi)$  \\
2   & 6.091179158637314e-01  &$(-\pi,0)$     &   2.028216383055356e+00 &  $(-\pi,-\pi)$ \\
3   & 1.183562035003593e+00  & $(w,-w)$      &   2.278229389751864e+00 &  $(-\pi,0)$  \\
4   & 1.228189268889777e+00  & $(-\pi,-\pi)$ &   2.796565325232735e+00 & $(z,\pi/10)$  \\
5   & 2.706589845271391e+00  & $(0,0)$       &   3.280192294561743e+00 &  $(-\pi,-\pi)$ \\
6   & 3.100532625333826e+00  & $(0,0)$       &   4.876190476190478e+00 &  $(\pi,-\pi)$ \\
7   & 4.086126343234464e+00  & $(a,b)$       &   5.001164336911962e+00 &  $(c,-c)$ \\
8   & 4.923102258884252e+00  & $(d,-d)$      &   6.507154754218933e+00 &  $(0,-\pi)$ \\
9   & 6.351060427971650e+00  & $(e,e)$       &   7.524544180802064e+00 &  $(f,-f)$ \\
10  & 1.124369260315089e+01  & $(-\pi,0)$    &   1.277464393947364e+01 & $(\pi,-\pi)$   \\
11  & 1.221231815611003e+01  & $(-g,g)$      &   1.319265448651837e+01 & $(0,-\pi)$   \\
12  & 1.312292935024724e+01  & $(h,h)$       &   1.551857649581396e+01 & $(i,-i)$   \\
13  & 1.403908928314477e+01  & $(\pi,-\pi)$  &   1.715087064330462e+01 & $(l,-l)$   \\
14  & 1.715307867863427e+01  & $(l,-l)$      &   2.041132693707404e+01 & $(-\pi,-\pi)$   \\
15  & 1.987073604165514e+01  & $(0,0)$       &   2.261907577519149e+01 & $(0,\pi)$   \\
16  & 2.236934933910699e+01  & $(m,-m)$ &  2.492211941947813e+01  & $(0,0)$   \\
 \hline
\end{tabular}}
\caption{Minimum and maximum of  surfaces \small{$s_i$, $i=1,\ldots,k^2$.
\quad \quad $u=-7.351326809400116e-01$, $v=2.500707752257475e+00$,
$w=3.053628059289279e+00$, $z=1.734159144781565e+00$,
$a=2.576105975943630e-01$, $b=-2.224247598741574e+00$,
$c=2.896548426609789e+00$, $d=1.507964473723100e+00$,
$e=1.043008760991811e+00$, $f=2.161415745669778e+00$,
$g=1.627344994559513e+00$, $h=2.796017461694915e+00$,
$i=7.099999397112930e-01$, $l= 9.550441666912972e-01$,
$m=2.519557308179014e+00$.} } \label{tab:minmaxPk}
\end{table}
In the case $k=2$, it is worth stressing that the chosen sorting
of the eigenvalues influences the surfaces definition, the minimal
value of the $i$-th surface being lower of the maximal value of
the $(i-1)$-th surface: this implies that the union of the ranges
of the eigenvalue functions of the symbol produces a connected set
which is a  cluster for the spectra of the given matrix-sequence.
 When $k=3$ the union of the ranges of the first four surfaces  is well separated from the union of the remaining five surfaces and hence the cluster is divided into two sub-clusters in the sense of Definition \ref{def-cluster}. In the  case $k=4$, the union of the ranges of the first nine surfaces  is well separated from the union of the remaining seven surfaces and consequently, as in the case of $k=3$, the cluster is divided into two sub-clusters.

However, there  is a phenomenon which is expected and it is
independent of the value of $k$: only the first surface
 reaches zero as minimum, while all the other
surfaces are strictly positive everywhere.}
\par
Now we give a general result regarding the main features of the
involved symbols, with the proof in various cases, including both
${\mathbb{P}_k}$ and ${\mathbb{Q}_k}$ Finite Element
approximations.
\begin{Theorem}\label{theorem general}
Given the symbols $\mathbf{f}_{\mathbb{P}_k}, \mathbf{f}_{\mathbb{Q}_k}$ in
dimension $d\ge 1$, the following statements hold true. For every
$\mathbf{f}\in \{\mathbf{f}_{\mathbb{P}_k}, \mathbf{f}_{\mathbb{Q}_k}\}$, setting
\[
\lambda_1(\mathbf{f}(\boldsymbol{\theta}))\le \cdots \le \lambda_{k^d}(\mathbf{f}(\boldsymbol{\theta})),
\]
 we obtain
\begin{enumerate}
\item $f( \mathbf{0}) \mathbf{e}= \mathbf{0}$;
\item there exist constants
$C_1,C_2>0$ (dependent on $\mathbf{f}$) such that
\begin{equation}
C_1 \sum_{j=1}^{d} (2-2\cos(\theta_{j})) \le
\lambda_1(\mathbf{f}(\boldsymbol{\theta})) \le C_2 \sum_{j=1}^{d}
(2-2\cos(\theta_{j}));
\end{equation}
\item there exist constants $m,M>0$ (dependent on $\mathbf{f}$) such that
\begin{equation}
0 < m \le \lambda_j(\mathbf{f}(\boldsymbol{\theta})) \le M,\ \ \ \
j=2,\ldots,k^d.
\end{equation}
\end{enumerate}
For $\mathbf{f}_{\mathbb{Q}_k}$ the proof is given for every $k,d\ge 1$
(for $d=1$ we notice again that $\mathbf{f}_{\mathbb{P}_k}\equiv
\mathbf{f}_{\mathbb{Q}_k}$). For $\mathbf{f}_{\mathbb{P}_k}$ the proof is given for
$d=2$ and $k=2,3$.
\end{Theorem}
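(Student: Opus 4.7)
\medskip

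My plan is to split the three claims according to their nature: item (1) is a structural identity that follows from properties of Lagrangian bases; items (2)–(3) require quantitative spectral control, which I would obtain by different routes for $\mathbb{Q}_k$ (general $k,d$, where tensor structure is available) and for $\mathbb{P}_k$ (where only the specific cases $d=2$, $k=2,3$ are claimed and the explicit symbols in \eqref{eq:P2symbol}--\eqref{eq:P3symbol_c} are at hand).

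\medskip

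First I would establish item (1). Recall that, viewed on the whole infinite lattice, $\mathbf{f}(\mathbf{0})$ equals the sum of \emph{all} block Fourier coefficients of $\mathbf{f}$. By construction of the Toeplitz block encoding, the row-sums of this block equal the row-sums of the stiffness matrix associated with an interior node, computed on the full (non-truncated) mesh. Since the Lagrangian shape functions form a partition of unity on any full patch around an interior node, $\sum_j \varphi_j \equiv 1$ locally, hence $\nabla\bigl(\sum_j \varphi_j\bigr)\equiv 0$ and therefore $\sum_j \int \nabla \varphi_i\cdot\nabla \varphi_j=0$ for every interior $i$. Translating this back, every row of $\mathbf{f}(\mathbf{0})$ sums to zero, i.e.\ $\mathbf{f}(\mathbf{0})\mathbf{e}=\mathbf{0}$. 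This is independent of $k$, $d$, and of the element family ($\mathbb{P}_k$ or $\mathbb{Q}_k$).

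\medskip

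For items (2)--(3) in the $\mathbb{Q}_k$ case, the plan is to exploit the tensor product nature of the approximation. The $d$-dimensional stiffness symbol can be written, up to a congruence, as a sum of $d$ terms, each of the Kronecker form $\mathbf{m}_{k_1}(\theta_1)\otimes\cdots\otimes\mathbf{f}_{k_j}^{\mathrm{stiff}}(\theta_j)\otimes\cdots\otimes\mathbf{m}_{k_d}(\theta_d)$, where $\mathbf{m}_{k_i}$ are the (strictly positive) mass-matrix symbols and $\mathbf{f}_{k_i}^{\mathrm{stiff}}=\mathbf{f}_{k_i}$ are the 1D stiffness symbols of Theorem \ref{itdsc}. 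By Theorem \ref{dfr}, $\det(\mathbf{f}_{k_i}(\theta_i))=d_{k_i}(2-2\cos\theta_i)$; combined with the continuity of the eigenvalues of $\mathbf{f}_{k_i}$ and their non-negativity, this implies that $\mathbf{f}_{k_i}$ has exactly one eigenvalue vanishing at $\theta_i=0$, of order $(2-2\cos\theta_i)$, and all remaining eigenvalues are bounded below by a positive constant on $[-\pi,\pi]$. Tensor-product algebra plus the positivity and smoothness of the mass-matrix symbols then yields that $\mathbf{f}_{\mathbb{Q}_{\mathbf{k}}}$ has exactly one branch $\lambda_1$ with $C_1 \sum_j (2-2\cos\theta_j)\le \lambda_1(\boldsymbol{\theta})\le C_2 \sum_j (2-2\cos\theta_j)$, while the remaining $k^d-1$ branches are sandwiched between two positive constants. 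The upper estimate for $\lambda_1$ uses $\mathbf{e}$ as a Rayleigh test vector, exploiting item (1) and a Taylor expansion; the lower estimate uses compactness of $[-\pi,\pi]^d$ together with the fact that $\mathbf{0}$ is the unique zero of $\lambda_1$ and the zero is of exact order two.

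\medskip

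For $\mathbb{P}_k$ with $d=2$ and $k\in\{2,3\}$ I would proceed by direct inspection of the explicit $4\times 4$ and $9\times 9$ Hermitian matrices in \eqref{eq:P2symbol} and \eqref{eq:P3symbol_c}. The strategy is: (a) verify item (1) by summing the rows (this is a finite check and also serves as a consistency verification of the symbols derived in the previous section); (b) show that $\mathbf{f}(\boldsymbol{\theta})$ is Hermitian positive semi-definite everywhere, with a unique zero eigenvalue at $\boldsymbol{\theta}=\mathbf{0}$; (c) compute (or suitably bound) $\det(\mathbf{f}(\boldsymbol{\theta}))/\bigl(\prod_{j\ge 2}\lambda_j(\mathbf{f}(\boldsymbol{\theta}))\bigr)$ to localize the behaviour of $\lambda_1$ at the origin, and (d) use a continuity/compactness argument to extract the uniform constants $C_1,C_2,m,M$. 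The numerical evidence in Table \ref{tab:minmaxPk} already indicates the sharp values of $m$ and $M$ for $k=2,3$ and provides a road-map of which minima and maxima need to be certified analytically.

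\medskip

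The main obstacle I anticipate is the lower bound on $\lambda_1$ in item (2). The upper bounds, both on $\lambda_1$ near $\mathbf{0}$ and on the remaining eigenvalues globally, are soft consequences of continuity and Rayleigh quotient estimates. Conversely, proving that $\lambda_1$ does not vanish to higher order and that $\lambda_2,\ldots,\lambda_{k^d}$ stay strictly positive requires ruling out additional zeros of the symbol, either by an explicit determinant computation (feasible for $\mathbb{Q}_k$ via Theorem \ref{dfr} and the tensor decomposition) or by a careful Schur-complement argument on the explicit matrices \eqref{eq:P2symbol}--\eqref{eq:P3symbol_c} for the $\mathbb{P}_k$ cases at hand.
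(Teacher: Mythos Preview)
Your plan is essentially correct and closely mirrors the paper's proof. For item (1) your partition-of-unity argument is a unified version of what the paper does case-by-case (explicit row-sum check in 1D via $\sum_j L_j'=0$, tensorization for $\mathbb{Q}_k$ in higher dimension, and direct inspection of \eqref{eq:P2symbol}--\eqref{eq:P3symbol_c} for $\mathbb{P}_k$). For items (2)--(3) in the $\mathbb{P}_k$ cases, the paper follows exactly your determinant route---it computes $\det(\mathbf{f}_{\mathbb{P}_k}(\boldsymbol{\theta}))$ explicitly and shows it is $\sim\sum_j(2-2\cos\theta_j)$---but for the lower bound on $\lambda_2$ it uses Cauchy interlacing rather than a Schur-complement argument: it selects a principal $(k^2-1)\times(k^2-1)$ submatrix $\mathbf{g}(\boldsymbol{\theta})$, checks by direct computation that $\det(\mathbf{g})>0$ on $[-\pi,\pi]^2$, and concludes $\lambda_2(\mathbf{f})\ge\mu_1(\mathbf{g})\ge\min_{\boldsymbol{\theta}}\mu_1(\mathbf{g})>0$ by compactness. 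This interlacing step is slightly more direct than a Schur-complement reduction and avoids having to track the zero eigenvector; otherwise the two strategies are interchangeable.
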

\begin{Remark}\label{rem general}
While in the case of $ \mathbb{Q}_k$ Finite Elements the analysis of the symbol $\mathbf{f}_{\mathbb{Q}_k}$ given in \cite{Q_k} and
in Theorem \ref{theorem general} is general, for the $\mathbb{P}_k$ Finite Elements  in dimension $d> 1$ there is still room for a substantial
improvement of the analysis and this will be a target in future researches.
\end{Remark}
\begin{proof}
$\blacksquare$ Case $\mathbb{Q}_k$ Finite Elements: any $k\ge 1$,
$d=1$. \\
Claims 2. and 3. have been proved in Theorem 8 and Corollary 1 in
\cite{Q_k}. Here, we prove Claim 1.: as first thing we  recall
that the
relation $\mathbf{f}( \mathbf{0}) \mathbf{e}= \mathbf{0}$,  
 with $\mathbf{e}$ vector of all ones and $k\ge 1$, is equivalent to say that every
row of $\mathbf{f}( \mathbf{0})$ is a vector having rowsum equal to zero.
We now show the latter feature.
Taking into consideration the notations in Section \ref{sez:Pk-1D},
we have
\[
\left(\mathbf{f}( \mathbf{0})\mathbf{e}\right)_s=
\displaystyle\sum_{j=1}^{k}\left(\mathbf{f}( \mathbf{0})\right)_{s,j}=
\displaystyle\sum_{j=1}^{k}\left(K_{0}+K_{1}+K_{1}^{T}\right)_{s,j}\hbox{,
}\ \ \ s=1,\ldots,k.
\]
 We first observe that the Lagrange
polynomial interpolating the constant $1$ is exactly equal to $1$,
by the uniqueness of the interpolant. Therefore
$\sum_{j=0}^{k}L_{j}=1$,
$\left(\sum_{j=0}^{k}L_{j}\right)'=\sum_{j=0}^{k}L'_{j}=0$, and
 hence, for $1\leq s\leq k-1,$
\begin{eqnarray*}
     \sum_{j=1}^{k}(\mathbf{f}( \mathbf{0}))_{s,j}&=&\sum_{j=1}^{k}\left<L'_{j},L'_{s}\right>+\left<L'_{0},L'_{s}\right>  \\
     &=&  \left<\sum_{j=0}^{k}L'_{j},L'_{s}\right>\\
     &=&\left<0,L'_{s}\right> =0.
  \end{eqnarray*}
Finally, for $s=k$ we  have
\begin{eqnarray*}
  \sum_{j=1}^{k}(\mathbf{f}( \mathbf{0}))_{k,j} &=& \sum_{j=1}^{k}\left<L'_{j},L'_{k}\right>+\left<L'_{0},L'_{0}\right>
   +\left<L'_{0},L'_{k}\right>+\sum_{j=1}^{k}\left<L'_{0},L'_{j}\right> \\
   &=&\sum_{j=0}^{k}\left<L'_{j},L'_{k}\right>+ \sum_{j=0}^{k}\left<L'_{0},L'_{j}\right>  \\
   &=& \left<\sum_{j=0}^{k}L'_{j},L'_{k}\right>+\left<L'_{0},\sum_{j=0}^{k}L'_{j}\right> =0,
\end{eqnarray*}
and consequently we conclude that $\mathbf{f}( \mathbf{0})\mathbf{e}=0.$
\par
$\blacksquare$ Case $\mathbb{Q}_k$ Finite Elements: any $k\ge 1$,
any $d\ge 2$. Claim 1. is a direct consequence of the proof for
$\mathbb{Q}_k$ Finite Elements, $k\ge 1$, and $d=1$, given its
tensorial structure, (see Formula (5.1) in \cite{Q_k}): in reality
it is sufficient to observe that $x\otimes y$ has rowsum equal to
zero if and only if either $x$ or $y$ has rowsum equal to zero,
with any $x$, $y$ complex vectors of any size. The case of more
than two vectors can be handled by an inductive argument.
Furthermore, Claims 2. and 3. are contained in Section 5.1 in
\cite{Q_k}.
\par
$\blacksquare$ Case $\mathbb{P}_k$ Finite Elements: $k=2$, $d=2$.
Claim 1. follows by direct check of the zero rowsum property from
the expression of the symbol $\mathbf{f}_{\mathbb{P}_k}$ in
(\ref{eq:P2symbol}), taking into account
$\boldsymbol{\theta}=(0,0)$ and the numerical values of the
involved parameters.
\par
%
Now, since the determinant of a matrix is the product of its
eigenvalues and since $\mathbf{f}$ is bounded in infinity norm, in order to
prove Claim 2. and 3. with $d=2$
 it is sufficient to show that:
 \begin{description}\begin{centering}
   \item[A.] $\textrm{ det}(\mathbf{f}(\boldsymbol{\theta}))\sim
   \displaystyle\sum_{j=1}^{2}(2-2\cos(\theta_{j}))$, \quad $\boldsymbol{\theta}=(\theta_1,\theta_2)$, \end{centering} %
  \begin{centering} \item[B.] there exists $C>0$ such that
  $\lambda_{2}(\mathbf{f}(\boldsymbol{\theta})) \geq    C>0$, \\
 \end{centering}\end{description}
 with $\mathbf{f}=\mathbf{f}_{\mathbb{P}_{2}}$.
\par We remind that the relation A. means there
   exist $C_{1},C_{2}>0 $ such that
\[
C_{1}\displaystyle\sum_{j=1}^{2}(2-2\cos(\theta_{j}))\leq
\textrm{det}(\mathbf{f}_{\mathbb{P}_{2}}(\boldsymbol{\theta}))\leq
C_{2}\sum_{j=1}^{2}(2-2\cos(\theta_{j}))
\]
uniformly in the domain $(\theta_{1},\theta_{2})\in[-\pi,\pi]^{2}$.
\par
By direct computation, we find
\begin{eqnarray*}
\textrm{ det} (\mathbf{f}_{\mathbb{P}_{2}}(\boldsymbol{\theta})) &=&
C'\left(-2\cos(\theta_{1})-2\cos(\theta_{2})-\cos(\theta_{1})\cos(\theta_{2})+5\right)
\\
&=& C'\left(\displaystyle\sum_{j=1}^{2}(2-2\cos(\theta_{j}))
+(1-\cos(\theta_{1})\cos(\theta_{2}))\right) \\
& \ge & C'\left(\displaystyle\sum_{j=1}^{2}(2-2\cos(\theta_{j}))
\right)
\end{eqnarray*} with
$C'={4096}/{81}$, being
$-1\leq\cos(\theta_{1})\cos(\theta_{2})\leq 1$ for all
$(\theta_{1},\theta_{2})\in[-\pi,\pi]^{2}$. Thus, $C_1=C'$.
%
\par
Furthermore, for $c=1/2$ it holds
$1-\cos(\theta_{1})\cos(\theta_{2}) \le c
\left(\displaystyle\sum_{j=1}^{2}(2-2\cos(\theta_{j})) \right )$
for all $(\theta_{1},\theta_{2})\in[-\pi,\pi]^{2}$. Thus,
$C_2=3C'/2$.
%
\par Finally, let
$\lambda_{1}\leq\lambda_{2}\leq\lambda_{3}\leq\lambda_{4}$ be the
eigenvalues of the Hermitian matrix
$\mathbf{f}_{\mathbb{P}_{2}}(\boldsymbol{\theta})$ and let
$\mu_{1}\leq\mu_{2}\leq\mu_{3}$ be the eigenvalues of the
principal submatrix $\mathbf{g}(\boldsymbol{\theta})$ chosen as
$\mathbf{g}(\boldsymbol{\theta})=(\mathbf{f}_{\mathbb{P}_{2}}(\boldsymbol{\theta}))_{i,j=2}^{4}$.\\

{Since the approximation matrices of problem (\ref{eq:modello}) are all Hermitian Positive Definite due to
\begin{itemize}
\item coerciveness of the continuous problem,
\item the use of Galerkin techniques such as the Finite Elements,
\end{itemize}
it follows that the symbol $$\mathbf{f}_{\mathbb{P}_{2}}(\boldsymbol{\theta})$$ of the related matrix-sequence has to be Hermitian nonnegative Definite, which means that $\lambda_{1} \geq 0$ on the whole definition domain. By contradiction if $\lambda_{1}$ is negative in a set of positive measure then, by the distribution results, (see \cite{Ba2-ETNA,Ba1-ETNA,glt-book-I,glt-book-II}),  many eigenvalues of the approximation matrices would be negative for a matrix size large enough and this is impossible.}

 By using the interlacing theorem, we have
\begin{equation}\label{interlacing1}
\lambda_{1}\leq\mu_{1}\leq\lambda_{2}\leq\mu_{2}\leq\lambda_{3}\leq\mu_{3}\leq\lambda_{4},
\end{equation}
with $\lambda_1$ equal to zero at
$\boldsymbol{\theta}=\boldsymbol{0}$ and positive elsewhere. By
direct computation of the determinant we find that $\rm
det(\mathbf{g}(\boldsymbol{\theta}))>0$ and hence, taking into account that
$(\mathbf{g}(\boldsymbol{\theta})$ is continuous and Hermitian Positive Definite on
the compact square $[-\pi,\pi]^{2}$, we conclude that all the
eigenvalues of $\mathbf{g}(\boldsymbol{\theta})$ are strictly positive and
continuous on $[-\pi,\pi]^{2}$ that is $\mu_{j}>0\mbox{ for
}j=1,2,3$. Thus, using (\ref{interlacing1}), we conclude
$\lambda_{2}\geq\mu_{1}\ge \min_{\boldsymbol{\theta}\in
[-\pi,\pi]^{2}}\mu_1 >0$.
\par
$\blacksquare$ Case $\mathbb{P}_k$ Finite Elements: $k=3$, $d=2$.
As in the case  $k=2$, Claim 1. follows by direct inspection from
the expression of the symbol $\mathbf{f}_{\mathbb{P}_k}$ in
(\ref{eq:P3symbol_c}), taking into account
$\boldsymbol{\theta}=(0,0)$ and the numerical values of the
involved parameters. \par In order to prove Claims 2. and 3. we
follow the very same steps as for the case $k = 2$,
 that is, we prove {\bf A.} and {\bf B.} with
$\mathbf{f}=\mathbf{f}_{\mathbb{P}_3}$.

By direct computation we have
\begin{eqnarray*}
\textrm{det}(\mathbf{f}_{\mathbb{P}_{3}}(\boldsymbol{\theta})) & = & a(-\cos(\theta_{2})\cos^{2}(\theta_{1})-\cos(\theta_{1})\cos^{2}(\theta_{2})+4\cos^{2}(\theta_{1})+4\cos^{2}(\theta_{2}) \\
     &&  -80\cos(\theta_{1})\cos(\theta_{2})-195\cos(\theta_{1})-195\cos(\theta_{2})+464)
\end{eqnarray*}
where $a={205891132094649}/{81920000000}$.  We write
det$(\mathbf{f}_{\mathbb{P}_{3}}(\boldsymbol{\theta}))$ in the
 form
$$
{\rm det}((\mathbf{f}_{\mathbb{P}_{3}}(\boldsymbol{\theta}))=
a\left(h(\boldsymbol{\theta})+\frac{195}{2}\sum_{j=1}^{2}(2-2\cos(\theta_{j}))\right)
$$
where
\[
h(\boldsymbol{\theta})=-\cos(\theta_{2})\cos^{2}(\theta_{1})-\cos(\theta_{1})\cos^{2}(\theta_{2})+4\cos^{2}(\theta_{1})+4\cos^{2}(\theta_{2})-80\cos(\theta_{1})\cos(\theta_{2})+74.
\]
Since
 $-\cos^{2}(\theta_{k})\leq-\cos(\theta_{j})\cos^{2}(\theta_{k})$ and  $1-\cos(\theta_{1})\cos(\theta_{2})\geq0$
we obtain
\begin{eqnarray*}
    h(\boldsymbol{\theta}) &\geq & 3\cos^{2}(\theta_{1})+3\cos^{2}(\theta_{2})-80\cos(\theta_{1})\cos(\theta_{2})+74  \\
              &\geq & 3(\cos(\theta_{1})-\cos(\theta_{2}))^{2}-74\cos(\theta_{1})\cos(\theta_{2})+74      \\
              &\geq & 0,
  \end{eqnarray*}
 which implies directly $ {\rm det}(\mathbf{f}_{\mathbb{P}_{3}}(\boldsymbol{\theta}))\geq C_1\sum_{j=1}^{2}(2-2\cos(\theta_{j}))$ with $C_1=
 \frac{195}{2}a$.
\\
On the other side, taking into account $\cos^{2}(\theta_{j})\leq 1,
j=1,2$, we deduce
\begin{eqnarray*}
   h(\boldsymbol{\theta})&=&\cos^{2}(\theta_{1})(4-\cos(\theta_{2}))+ \cos^{2}(\theta_{2})(4-\cos(\theta_{1}))-80\cos(\theta_{1})\cos(\theta_{2})+74    \\
    &\leq &8 -\cos(\theta_{1})-\cos(\theta_{2})-80\cos(\theta_{1})\cos(\theta_{2})+74     \\
    &\leq&2 -\cos(\theta_{1})-\cos(\theta_{2})+80(1-\cos(\theta_{1})\cos(\theta_{2})) \\
    &\leq&\frac{81}{2}\sum_{j=1}^{2}(2-2\cos(\theta_{j})).
 \end{eqnarray*}
Owing to the relation
$1-\cos(\theta_{1})\cos(\theta_{2})\leq\frac{1}{2}(4
-2\cos(\theta_{1})-2\cos(\theta_{2}))$, as already observed  in
the case $k = 2$, we  find
$\textrm{det}(\mathbf{f}_{\mathbb{P}_{3}}(\boldsymbol{\theta}))\leq138a\sum_{j=1}^{2}(2-2\cos(\theta_{j}))$
with $C_2=138a$.
\par
Finally, let $\lambda_{1}\leq\lambda_{2}\leq\cdots\leq\lambda_{9}$
be the eigenvalues of the Hermitian matrix
$\mathbf{f}_{\mathbb{P}_{3}}(\boldsymbol{\theta}),$ and let
$\mathbf{g}(\boldsymbol{\theta})=(\mathbf{f}_{\mathbb{P}_{3}}(\boldsymbol{\theta}))_{i,j=1}^{8}$
be the principal submatrix and $\mu_{1}\leq\cdots\leq\mu_{8}$ its
eigenvalues.

 {Since the approximation matrices of problem (\ref{eq:modello}) are all Hermitian Positive Definite due to
\begin{itemize}
\item coerciveness of the continuous problem,
\item the use of Galerkin techniques such as the Finite Elements,
\end{itemize}
it follows that the symbol $\mathbf{f}_{\mathbb{P}_{3}}(\boldsymbol{\theta})$ of the related matrix-sequence has to be Hermitian nonnegative Definite, which means that $\lambda_{1} \geq 0$ on the whole definition domain. By contradiction if $\lambda_{1}$ is negative in a set of positive measure then, by the distribution results, (see \cite{glt-book-I,glt-book-II}),  many eigenvalues of the approximation matrices would be negative for a matrix size large enough and this is impossible.}

 By using the interlacing theorem, we have
\begin{equation}\label{interlacing2}
\lambda_{1}\leq\mu_{1}\leq\lambda_{2}\leq\mu_{2}\leq\cdots\leq\mu_{8}\leq\lambda_{9},
\end{equation}
with $\lambda_1$ equal to zero at
$\boldsymbol{\theta}=\boldsymbol{0}$ and positive elsewhere. By
direct computation we find
$\textrm{det}(\mathbf{g}(\boldsymbol{\theta}))=\displaystyle\prod_{j=1}^{8}
\mu_{j}>0$ so that, taking into account that
$\mathbf{g}(\boldsymbol{\theta})$ is continuous and Hermitian Positive Definite on
the compact square $[-\pi,\pi]^2$, we deduce $\mu_{j}~>~0$ for all
$j=1,\ldots,8$.  Consequently, by (\ref{interlacing2}), we
conclude
 $\lambda_{2}\geq\mu_{1}\ge \min_{\boldsymbol{\theta}\in [-\pi,\pi]^{2}}\mu_1 >0$.  \end{proof}
%
\subsection{Extremal eigenvalues and conditioning}\label{rem consequence}

As already observed, direct consequences of Proposition
\ref{distr-P_2}, Corollary \ref{cor:distr-P_2}, Proposition
\ref{distr-P_3}, Corollary \ref{cor:distr-P_3} are  that the
sequences of Finite Element matrices are distributed as the symbol
$\mathbf{f}$ and that the union of the ranges of the eigenvalue functions
of $\mathbf{f}$ represent a cluster for their spectra, while the convex
hull of the the union of the ranges of the eigenvalue functions of
$\mathbf{f}$ contains all the eigenvalues of the involved matrices.
\par
On the other hand,  Theorem \ref{theorem general} gives information on the analytical properties of $\mathbf{f}$, which are relevant for giving
results on the extreme eigenvalues and the asymptotic conditioning.

Indeed, from Theorem \ref{theorem general}, we know that the
minimal eigenvalue function of $\mathbf{f}$ behaves as the symbol of the
standard Finite Difference Laplacian, while  the other eigenvalue
functions are well separated from zero and bounded. Furthermore,
thanks to the analysis in \cite{marko}, the fact that the minimal
eigenvalue of $\mathbf{f}$ has a zero of order two implies that
\begin{itemize}
\item the minimal eigenvalue goes to zero as $N^{-2/d}$,
\item the maximal eigenvalue converges from below to the maximum of the
maximal eigenvalue function of  $\mathbf{f}${,}
\item and hence the conditioning of the involved matrices grows asymptotically exactly
as $N^{2/d}$,
\end{itemize}
 with $N$ being the global matrix size, (see also the argument in Section 5.1 in \cite{Q_k}, \cite{S-LAA-1998} and the last part of Subsection \ref{bloctoep} of the present thesis).

 \section{The case of variable coefficients and non-Cartesian domains} \label{sez:var coeff}
When the diffusion coefficient $a(\mathbf{x})$ in
(\ref{eq:modello}) is not constant, the structure of the stiffness
matrix  is no longer Toeplitz, but somehow the Toeplitz character
is hidden in an asymptotic sense and indeed the sequence of
matrices $\{A_n(a,\Omega,{\mathbb{P}_k})\}_n$ approximating (\ref{eq:modello})
can be spectrally treated with the help of the GLT technology with $k=1,2,3$.
\par
Below we report the essentials of the steps for computing the spectral symbol.
 \begin{description}
  \item[Step 1.] If $\Omega=(0,1)^d$, $d\ge 1$, then $\{A_n(a,\Omega,{\mathbb{P}_k})\}_n$ can be written as a sequence of principal submatrices of a linear combination   of products involving the multilevel block Toeplitz sequence generated by $\mathbf{f}_{\mathbb{P}_{k}}$, the diagonal sampling sequence of $a(\mathbf{x}) I_{N(k,d)}$, and zero
  distributed sequences. The use of items {\bf GLT~1.}--{\bf GLT~3.}, combined with Theorem \ref{extradimensional}, leads to the conclusion
\begin{equation}\label{symbol var1}
\{A_n(a,\Omega,{\mathbb{P}_k})\}_n\sim_{\sigma,\lambda} a(\mathbf{x})
f_{\mathbb{P}_{k}}(\boldsymbol{\theta}),\ \ \ \mathbf{x}\in
(0,1)^d,\ \boldsymbol{\theta}\in [-\pi,\pi]^d.
\end{equation}
  \item[Step 2.] If $\Omega$ is Peano-Jordan measurable, then without loss of generality, we assume $\Omega\subset \Omega_d=(0,1)^d$ and $d\ge 2$. Hence $\{A_n(a,\Omega,{\mathbb{P}_k})\}_n$ can be seen, up to zero-distributed sequences, as a sequence of principal submatrices of $\{A_n(\hat a,\Omega,{\mathbb{P}_k})\}_n$, where $\hat a$ is equal to $a$ on the domain $\Omega$ and it is identically zero in the complement $\Omega_d\backslash \Omega$. In this way we are reduced to  {\bf Step 1.} and the use of a reduction argument, (see Section 6 in \cite{glt-1} and Section 3.1.4 in \cite{glt-2}) implies the distribution result
 \begin{equation}\label{symbol var2}
\{A_n(a,\Omega,{\mathbb{P}_k})\}_n\sim_{\sigma,\lambda} a(\mathbf{x})
\mathbf{f}_{\mathbb{P}_{k}}(\boldsymbol{\theta}),\ \ \ \mathbf{x}\in
\Omega,\ \boldsymbol{\theta}\in [-\pi,\pi]^d.
\end{equation}
 \end{description}
The rest of the section is now devoted to show that the
predictions in (\ref{symbol var1}) and (\ref{symbol var2}) are
numerically confirmed. Indeed, in the constant coefficient case,
we plotted the surface of the different eigenvalue functions
$\lambda_j\left(\mathbf{f}_{\mathbb{P}_k}(\boldsymbol{\theta})\right)$,
$j=1,\ldots k^2$, $k=1,2,3,4$, and this was technically possible
because the functions are all bivariate  as
$\boldsymbol{\theta}\in [-\pi,\pi]^2$.
\par
In the variable coefficient case, the visualization is
substantially more involved, since the symbol is $a(\mathbf{x})
\mathbf{f}_{\mathbb{P}_{k}}(\boldsymbol{\theta})$ and hence the eigenvalue
functions
$\lambda_j\left(a(\mathbf{x})\mathbf{f}_{\mathbb{P}_k}(\boldsymbol{\theta})\right)$,
$j=1,\ldots k^2$, $k=1,2,3,4$, are all functions in $4$ variables
 as $\mathbf{x}\in \Omega$, $\boldsymbol{\theta}\in
[-\pi,\pi]^2$. Consequently, for visualization purposes, we choose
a different technique: for a fixed $k$ and for a fixed matrix size,
we make an ordering (nondecreasing) of all the eigenvalues of
$A_n(a,\Omega,{\mathbb{P}_k})$ and we take the same ordering (nondecreasing) of
the values given by an equispaced sampling of all the functions
$\lambda_j\left(a(\mathbf{x})\mathbf{f}_{\mathbb{P}_k}(\boldsymbol{\theta})\right)$,
$j=1,\ldots k^2$.
\par
As it can be seen from Figures \ref{fig:es0}-\ref{fig:es3}, all
concerning the case $\Omega=(0,1)^2$, the match is perfect showing
that the distribution result in (\ref{symbol var1}) is fully
confirmed with $a(x,y)=1$, $a(x,y)=e^{x+y}$,
$a(x,y)=1+2\sqrt{x}+y$, $a(x,y)=1$ if $y\ge x$ and $a(x,y)=2$
otherwise.\\

\begin{figure}[htb]
\centering
\includegraphics[width=\textwidth]{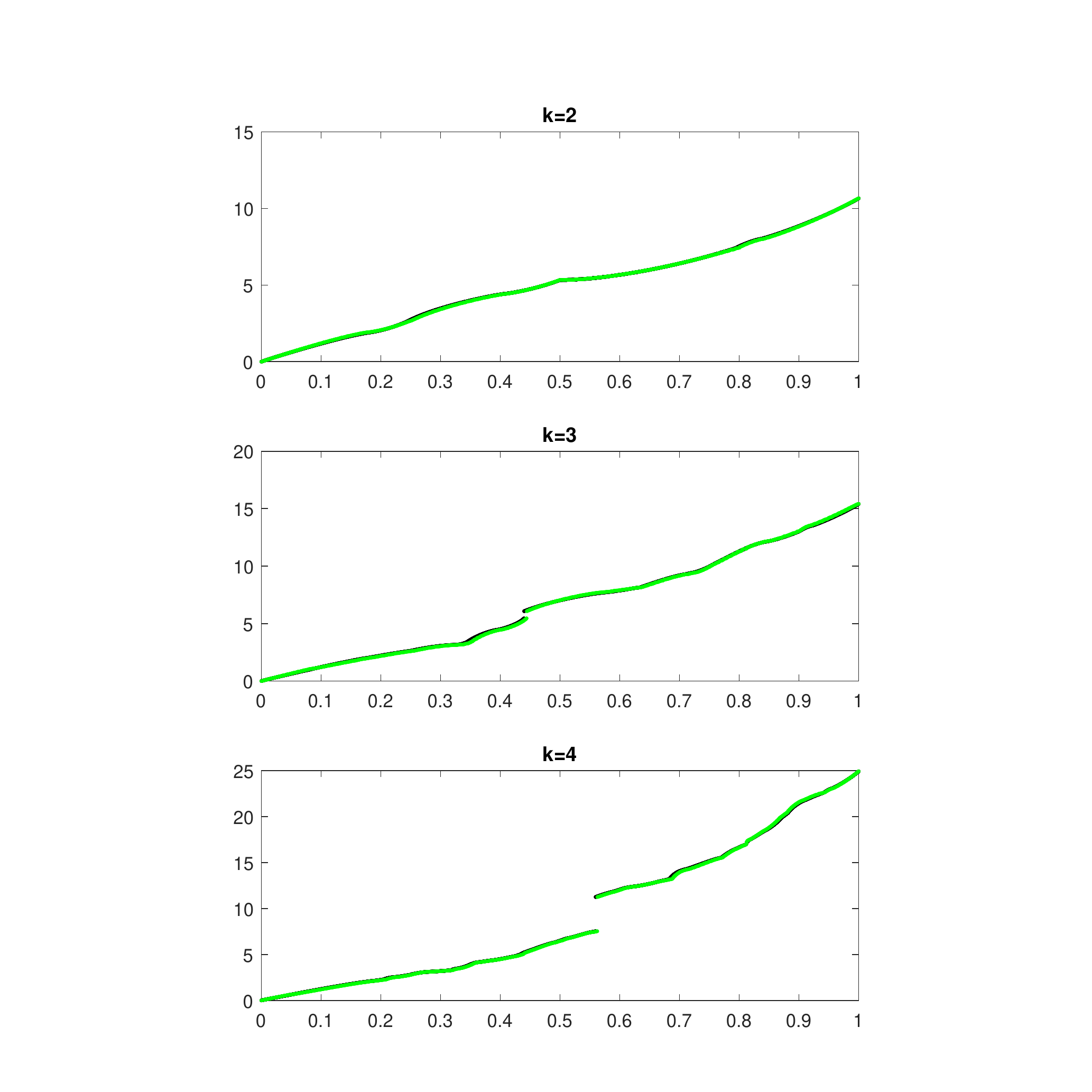}
\caption{Ordered equispaced samplings of
$\lambda_j\left(a(x,y)\mathbf{f}_{\mathbb{P}_k}(\boldsymbol{\theta})\right)$,
$j=1,\ldots k^2$ (green dots) and ordered eigenvalues
$\lambda_l(A_n(a,\Omega,{\mathbb{P}_k}))$ with $a(x,y)\equiv1$.}
\label{fig:es0}
\end{figure}
\begin{figure}[htb]
\centering
\includegraphics[width=\textwidth]{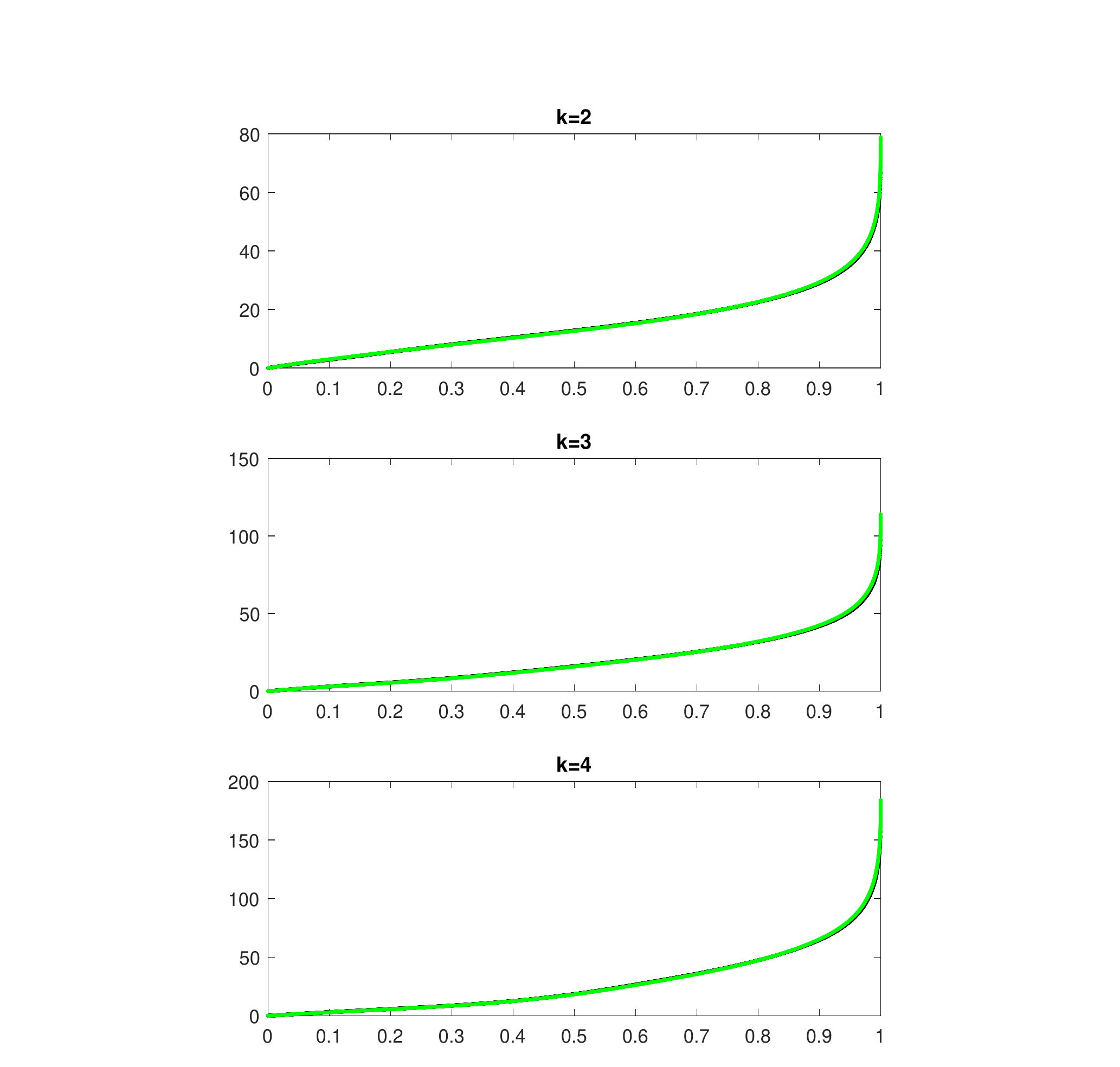}
\caption{Ordered equispaced samplings of
$\lambda_j\left(a(x,y)\mathbf{f}_{\mathbb{P}_k}(\boldsymbol{\theta})\right)$,
$j=1,\ldots k^2$ (green dots) and ordered eigenvalues
$\lambda_l(A_n(a,\Omega,{\mathbb{P}_k}))$ with $a(x,y)=e^{x+y}$.}
\label{fig:es1}
\end{figure}
\begin{figure}[htb]
\centering
\includegraphics[width=\textwidth]{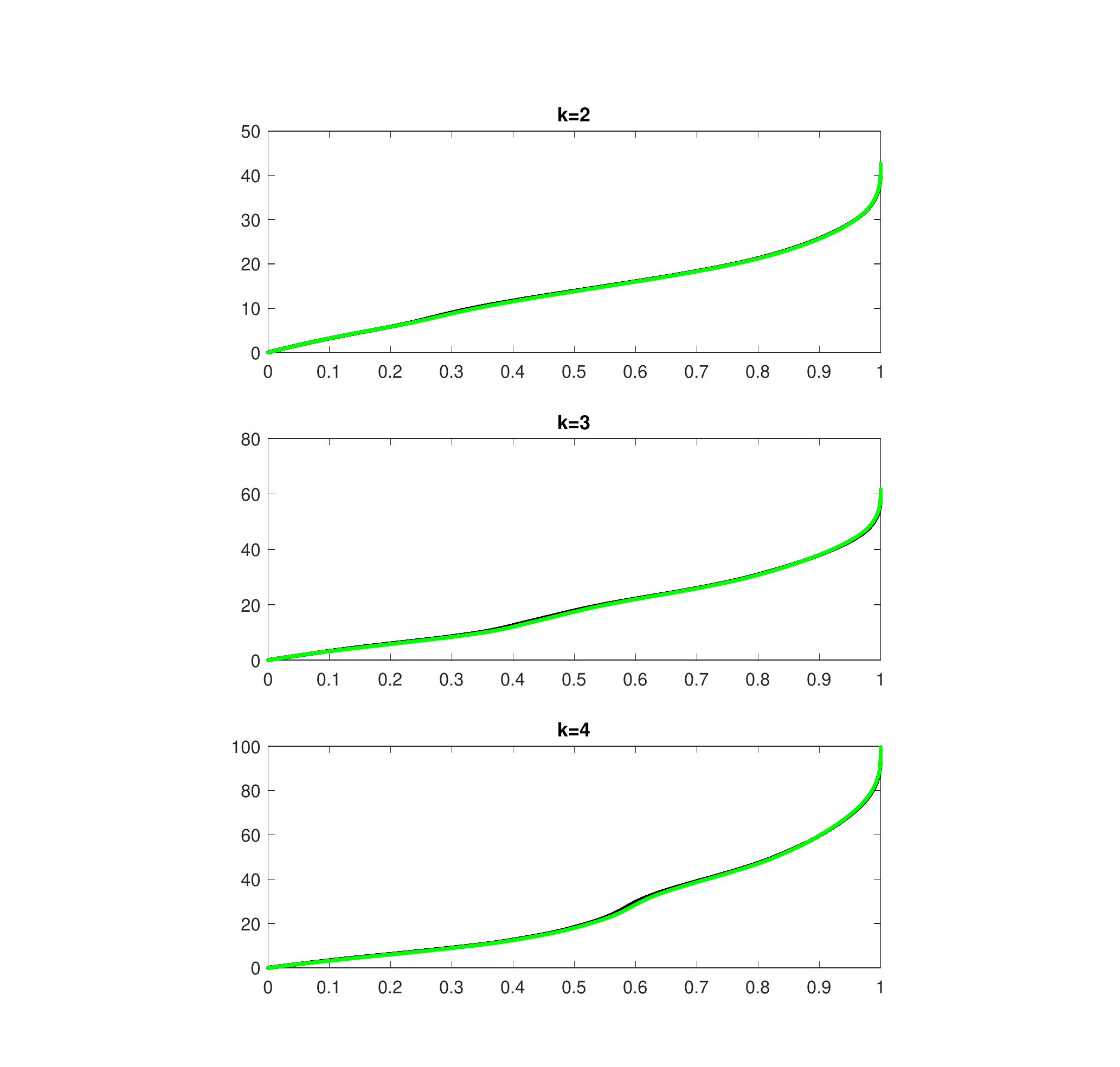}
\caption{Ordered equispaced samplings of
$\lambda_j\left(a(x,y)\mathbf{f}_{\mathbb{P}_k}(\boldsymbol{\theta})\right)$,
$j=1,\ldots k^2$ (green dots) and ordered eigenvalues
$\lambda_l(A_n(a,\Omega,{\mathbb{P}_k}))$ with
$a(x,y)=1+2\sqrt{x}+y$.} \label{fig:es2}
\end{figure}
\begin{figure}[htb]
\centering
\includegraphics[width=\textwidth]{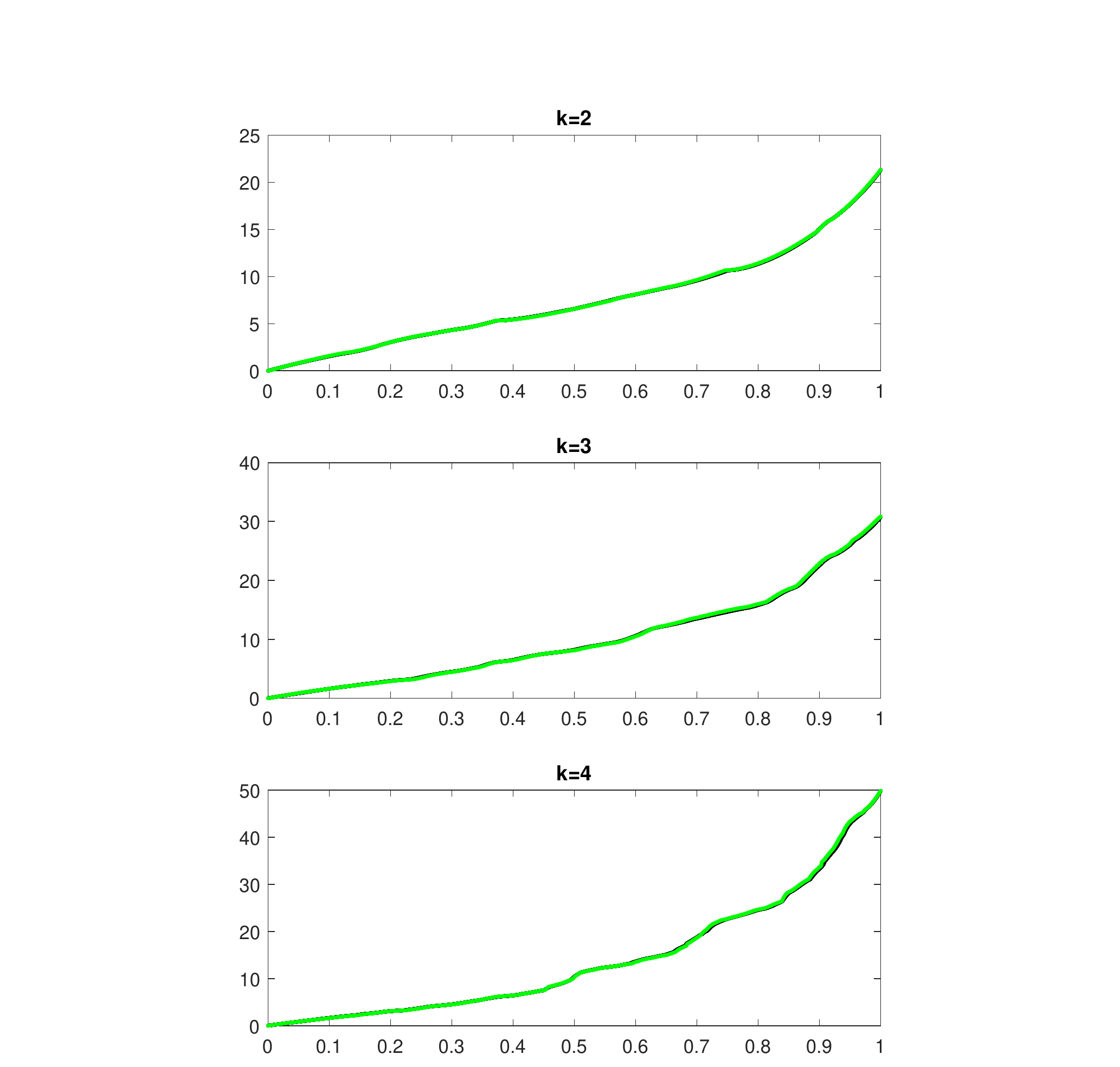}
\caption{Ordered equispaced samplings of
$\lambda_j\left(a(x,y)\mathbf{f}_{\mathbb{P}_k}(\boldsymbol{\theta})\right)$,
$j=1,\ldots k^2$ (green dots) and ordered eigenvalues
$\lambda_l(A_n(a,\Omega,{\mathbb{P}_k})$ with $a(x,y)=1$ if $y\ge
x$ and $a(x,y)=2$ otherwise.} \label{fig:es3}
\end{figure}
We have four relevant remarks.

\begin{itemize}
\item As a general observation, the graph of the ordered
equispaced sampling of
  $\lambda_j\left(a(\mathbf{x})\mathbf{f}_{\mathbb{P}_k}(\boldsymbol{\theta})\right)$, $j=1,\ldots k^2$, represent a
  monotone rearrangement, (see \cite{DFS} for seminal results and \cite{BBG} for a recent revue) of the different eigenvalue functions and this global rearrangement is a  fortiori a univariate monotone function.
\item When $a(\mathbf{x})\equiv 1$, the symbol in (\ref{symbol
var1}) reduces to $\mathbf{f}_{\mathbb{P}_k}$ and we observe jumps which
correspond to the existence of an index $l$ such that
 \[
\max \lambda_l\left(\mathbf{f}_{\mathbb{P}_k}(\boldsymbol{\theta})\right) <
\lambda_{l+1}\left(\mathbf{f}_{\mathbb{P}_k}(\boldsymbol{\theta})\right),
\]
with $1\le l\le k^2-1$, $k=1,2,3,4$. In other words the rearranged
function has a few discontinuity points. When $a(\mathbf{x})$ is
not constant such a phenomenon disappears, since the range of all
eigenvalue functions becomes wider and all the ranges intersect
(there is not a range not intersecting at least another range).
Furthermore,  beside the latter smoothing effect due to
$a(\mathbf{x})$, it is worthwhile observing that the regularity of
the diffusion coefficient does not  affect the qualitative
behaviour of the eigenvalue distribution. In fact, the
reconstruction given by the symbol of the eigenvalues of
$A_n(a,\Omega,{\mathbb{P}_k})$  is accurate in all the considered
examples and, more specifically, the figures look very similar
independently of the fact that the diffusion coefficient is smooth
($a(\mathbf{x})=e^{x+y}$ in Figure \ref{fig:es1}), or is ${ C}^0$
but not ${ C}^1$ ($a(\mathbf{x})=1+2\sqrt{x}+y$ in Figure
\ref{fig:es2}), or is discontinuous ($a(\mathbf{x})=1$ if $y\ge x$
and $a(\mathbf{x}) =2$ otherwise, in Figure \ref{fig:es3}).
\item In all Figures \ref{fig:es1}--\ref{fig:es3} the matrix size
is quite moderate (of the   order of $10^4$), showing that the
spectral distribution effects, which represent an asymptotic
property, can be already visualized for small orders of the
considered matrices.
\item We did not show any figure regarding the distribution
formula (\ref{symbol var2}) just because the check has been done
and there is no difference with respect to the case of
$\Omega=(0,1)^2$ as in (\ref{symbol var1}).
\end{itemize}
\section{Preconditioning and complexity issues} \label{sez:numerical}
Lastly, we consider a few numerical experiments on
preconditioning. First of all, the interest in solving the
constant coefficient case $a(x,y)\equiv 1$ refers to its use in
optimally preconditioning the nonconstant coefficient case
whenever $a$ is smooth enough, (see \cite{C1,C2} for the case
$k=1$).  This is evident from Table \ref{tab:a=exp}, where we
report the number of iterations required by PCG applied to
$A_n(a,\Omega,{\mathbb{P}_k})$, with $a(x,y)=\exp(x+y)$ in the
case of preconditioning with $P_n(a)=\tilde{D}_n^{1/2}(a)
A_n(1,\Omega,{\mathbb{P}_k}) \tilde{D}_n^{1/2}(a)$ with
$\tilde{D}_n(a)=D_n(a)D_n^{-1} (1)$, where $D_n(a)$ is the main
diagonal of $A_n(a,\Omega,{\mathbb{P}_k})$ and $D_n(1)$ is the
main diagonal
of $A_n(1,\Omega, {\mathbb{P}_k})$.\\

Therefore, we now focus our attention on the case $a(x,y)\equiv
1$. Taking into account the Toeplitz  nature of the matrices at
hand, our aim is to preliminarily test a classical preconditioner
as  the Circulant one, clearly by considering the Strang
correction, in order to deal with its singularity. More precisely, we will
consider as preconditioner the Circulant matrix generated by the
very same function $f_{\mathbb{P}_{k}}(\boldsymbol{\theta})$ plus
the correction $h^2e e^T$, $e$ being the vector of all ones and
$h$ the constant triangle edge. In the even columns of Table
\ref{tab:CS_k2} (case $k=2$) we report the number of PCG
iterations  required to solve the system with Toeplitz matrix
$T_n(f_{\mathbb{P}_{k}})$, in the case of no preconditioning
($P_n=I_n$), preconditioning by the incomplete Cholesky
factorization, and by the Circulant $C_n(f_{\mathbb{P}_{k}})$ plus
the Strang correction, respectively. To this end, it is worth
 stressing that we have to consider the dimension of the Toeplitz/Circulant matrix fitting with its natural
dimension with  respect to the symbol size. Therefore, when
instead we want to solve the system with the FEM matrix
$A_n(1,\Omega, {\mathbb{P}_k)}$, principal submatrix of the matrix
$T_n(f_{\mathbb{P}_{k}})$, we need to match its dimension with the
one previously considered for the Circulant preconditioner. We
obtain that goal just by imposing boundary conditions to
$T_n(f_{\mathbb{P}_{k}})$, but keeping the size unchanged. The
related numerical results are reported in odd columns of Table
\ref{tab:CS_k2}. In both cases, the number of required iterations
increases as the dimension  increases. No significant difference
in even or odd column is observed in the case of no
preconditioning or incomplete Cholesky preconditioning (the
results are slightly better in the second case). In the case of
the Circulant preconditioning we observe a clear worsening in the
effectiveness when the preconditioner is applied not to the
Toeplitz matrix, but to its principal submatrix plus boundary
conditions, though the iteration growth rate seems smaller than
the one observed in the case of incomplete Cholesky
preconditioning. Furthermore, as expected from the theory, a weak
cluster around $1$ is observed, (see Table \ref{weak cluster
table}).
\par
In Tables \ref{tab:CS_k3} and \ref{tab:CS_k4} the same numerical
experiments are reported in the case $k=3$ and $k=4$. The
numerical behavior seems to be substantially of the same type,
independently of the parameter $k$,  also in reference to the weak
cluster phenomenon observed for $k=2$ in Table \ref{weak cluster
table}.

\begin{table}
\centering
\begin{tabular}{|cc|cc|cc|}
\hline
\multicolumn{2}{|c|}{$k=2$} & \multicolumn{2}{|c|}{$k=3$} & \multicolumn{2}{|c|}{$k=4$} \\
\hline
 $N$ & $P_n(a)$ & $N$ & $P_n(a)$ & $N$ & $P_n(a)$ \\
 \hline
 49   & 4 & 121   & 5 & 225   & 5 \\
225   & 3 & 529   & 4 & 961   & 5 \\
961   & 3 & 2209  & 4 & 3969  & 4 \\
3969  & 3 & 9025  & 4 & 16129 & 4 \\
16129 & 3 & 36481 & 4 & 65025 & 4\\
\hline
\end{tabular} \caption{Number of PCG iterations  to reach
convergence with  respect to relative residual less than 1.e-6,
Preconditioner $P_n(a)$, $a(x,y)=exp(x+y)$.}  \label{tab:a=exp}
\end{table}

\begin{table}
 \centering
\begin{tabular}{|c|cc|cc|cc|}
\hline
\multicolumn{7}{|c|}{$k=2$}  \\
\hline
$N$ & \multicolumn{2}{|c|}{$P=I$} & \multicolumn{2}{|c|}{$P=IC$} & \multicolumn{2}{|c|}{$P=C_S$} \\
 \hline
 64   & 26  & 19  & 11  & 10  & 14 & 19 \\
256   & 47  & 42  & 18  & 17  & 19 & 30 \\
1024  & 90  & 86  & 32  & 31  & 26 & 42 \\
4096  & 174 & 170 & 56  & 55  & 38 & 59 \\
16384 & 336 & 331 & 98  & 96  & 53 & 87 \\
 \hline
\end{tabular} \caption{Number of PCG iterations  to reach
convergence with  respect to relative relative residual less than
1.e-6 - case $k=2$.}\label{tab:CS_k2}
\end{table}
\begin{table}  \centering
 \begin{tabular}{|c|cc|cc|c|cc|cc|} \hline
\multicolumn{5}{|c|}{$k=2$}   \\
\hline
$N$ & $n_{out}$ & $\%$ & $n_{out}$ & $\%$  \\
 \hline
 64   & 27   & 4.2e-1  & 27  &  4.2e-1  \\
256   & 59   & 2.3e-1  & 55  &  2.1e-1  \\
1024  & 123  & 1.2e-1  & 111 &  1.1e-1  \\
4096  & 251  & 6.1e-2  & 225 &  5.5e-2  \\
 \hline
\end{tabular} \caption{Number of outliers $n_{out}$ (eigenvalues not belonging to $(1-\varepsilon,1+\varepsilon)$ with $\varepsilon=1.e-1$) and
their percentage with respect to the dimension. The second and third
columns refer to the Toeplitz case, the fourth and fifth columns
to the case of the FEM matrix.}\label{weak cluster table}

\end{table}

\begin{table}
 \centering
\begin{tabular}{|c|cc|cc|cc|}
\hline
\multicolumn{7}{|c|}{$k=3$}  \\
\hline
$N$ & \multicolumn{2}{|c|}{$P=I$} & \multicolumn{2}{|c|}{$P=IC$} & \multicolumn{2}{|c|}{$P=C_S$} \\
 \hline
 144  & 45  & 39  & 16  & 15  & 19 & 30 \\
576   & 82  & 78  & 28  & 27  & 26 & 42 \\
2304  & 159 & 155 & 50  & 48  & 36 & 61 \\
9216  & 306 & 301 & 86  & 84  & 49 & 90 \\
 \hline
\end{tabular} \caption{Number of PCG iterations  to reach
convergence with  respect to relative relative residual less than
1.e-6 - case $k=3$.}\label{tab:CS_k3}
\end{table}
%
\begin{table}
 \centering
\begin{tabular}{|c|cc|cc|cc|}
\hline
\multicolumn{7}{|c|}{$k=4$}  \\
\hline
$N$ & \multicolumn{2}{|c|}{$P=I$} & \multicolumn{2}{|c|}{$P=IC$} & \multicolumn{2}{|c|}{$P=C_S$} \\
 \hline
 256   & 74  & 66  & 10 & 10 & 23 & 38 \\
1024  & 134 & 129 & 18 & 17 & 31 & 57 \\
4096  & 261 & 254 & 31 & 30 & 43 & 81 \\
16384 & 502 & 490 & 54 & 51 & 61 & 116\\
 \hline
\end{tabular} \caption{Number of PCG iterations  to reach
convergence with  respect to relative relative residual less than
1.e-6 - case $k=4$.}\label{tab:CS_k4}
\end{table}

\chapter{Multigrid for \texorpdfstring{$\mathbb{Q}_k$}{Qk} Finite Element Matrices Using a (Block) Toeplitz  Symbol Approach}
In the present chapter, we consider multigrid strategies for the resolution of linear systems arising from the \texorpdfstring{$\mathbb{Q}_k$}{Qk} Finite Elements approximation of the elliptic problem

\begin{equation} \label{eq:modello1}
\left \{
\begin{array}{l}
\mathrm{div} \left(-a(\mathbf{x}) \nabla u\right)
=f, \quad \mathbf{x}\in \Omega \subseteq \mathbb{R}^d, \\
u_{|\partial \Omega}=0,
\end{array}
\right.
\end{equation}
with {$\Omega$ a bounded subset} of $\mathbb{R}^d$, having smooth boundaries, and with $a$ being continuous and positive on $\overline \Omega$. While the analysis is performed in one dimension, the  numerics are carried out also in higher dimension \texorpdfstring{$d\ge 2$}{greater than one}, showing an optimal behavior in terms of the dependency on the matrix size and a substantial robustness with respect to the dimensionality \texorpdfstring{$d$}{d} and to the polynomial degree \texorpdfstring{$k$}{k}.

\section{Optimality of two-grid method in the case of Toeplitz matrices }\label{optimal2grid}
We start the current subsection with a key remark.
\begin{Remark}\label{rmk:condition_scalar}
{In the relevant literature, (see for instance, \cite{ADS}), the convergence analysis of the two-grid method splits into the validation of two separate conditions: the smoothing property and the approximation property. Regarding the latter, with reference to {scalar} structured matrices \cite{ADS,Fiorentino-Serra}, the optimality of two-grid methods is given in terms of choosing the proper conditions that the symbol $p$ of a family of projection operators has to fulfill. Indeed, {consider} $T_{n}(f)$ with ${n}=(2^t-1)$ and $f$ being a nonnegative trigonometric polynomial. Let $ {\theta}^0$ be the unique zero of $f$. Then,   the optimality of the two-grid method applied to $T_{n}(f)$ is guaranteed if we choose the symbol $p$ of the {family of} projection operators such that
 \begin{equation}
\begin{split}
\underset{ {\theta}\to  {\theta}^0}{\lim\sup}\frac{|p( {\eta})|^2}{f( {\theta})}&<\infty, \quad  {\eta}\in \mathcal{M}( {\theta}),\\
\sum_{ {\eta}\in \Omega( {\theta})}p^2( {\eta})&>0,
\end{split}
 \end{equation}
where the sets $\Omega( {\theta})$ and $\mathcal{M}( {\theta})$ are the following {corner} and {mirror} points
\[\Omega( {\theta})=\{ \eta\in \{\theta,\theta+\pi\}\},\qquad \mathcal{M}( {\theta})=\Omega( {\theta})\setminus \{ {\theta}\},\]
respectively.}
\end{Remark}
{Informally, it means that the optimality} of the two-grid method is obtained by choosing the {family of} projection operators associated  to a symbol $p$ such that $|p|^2(\vartheta) +|p|^2(\vartheta+\pi)$ does not have zeros and $|p|^2(\vartheta+\pi)/f(\vartheta)$ is bounded
(if we require the optimality of the V-cycle, then the second condition is a bit stronger, {see \cite{ADS})}. In a differential context, the previous conditions mean that $p$ has a zero of order at least $\alpha$ at $\vartheta=\pi$, whenever $f$ has a zero at {$ {\theta}^0=0$} of order $2\alpha$.

In our specific block setting, by interpreting the analysis given in \cite{multi-block}, all the involved symbols are matrix-valued and the {conditions which are sufficient for the} two-grid convergence {and optimality} are the following:
\begin{description}
\item[(A)] zero of order $2$ at $\vartheta=\pi$ of the proper eigenvalue function of the symbol of the projector for $\mathbb{Q}_k$, $k=1,2,3$ (mirror point theory \cite{ADS,Fiorentino-Serra});
\item[(B)] positive definiteness of $\mathbf{pp}^*(\vartheta) +\mathbf{pp}^*(\vartheta+\pi)$; and
\item[(C)] commutativity of   $\mathbf{p}(\vartheta)$ and $\mathbf{p}(\vartheta+\pi)$.
\end{description}
Our choices are in agreement with the mathematical conditions set in {Items}  {\bf (A)} and {\bf (B)}, while {Condition}    {\bf (C)} is not satisfied. The violation of Condition    {\bf (C)} is discussed later, while, in relation to Condition    {\bf (A)}, we observe that  {a stronger condition is met}, since the considered order of the zero at $\vartheta=\pi$ is $k+1$, which is larger than $2$ for $k=2,3$.

\section{Structure of the Matrices and Spectral Analysis} %
Since in the case $d=1$, $\mathbb{Q}_k\equiv \mathbb{P}_k$, then everything provided in the Section \ref{sez:Pk-1D} remains valid.
\section{Multigrid Strategy Definition, Symbol Analysis, and Numerics}\label{sec:MuDef-Pk-1D}
Let us consider a family of meshes

\[ \{ {\cal T}_{2^sh}
\}_{s=0,\ldots , \overline{s}}\textrm{ such that  } {\cal
T}_{2^sh} \subseteq {\cal T}_{2^{s-1}h} \subseteq \ldots \subseteq
{\cal T}_{2h} \subseteq {\cal T}_{h}.\]

Clearly, the same inclusion property is inherited by the
corresponding Finite Element functional spaces and hence we find ${\cal V}_{2^sh}
\subseteq {\cal V}_{2^{s-1}h} \subseteq \ldots \subseteq {\cal
V}_{2h} \subseteq {\cal V}_{h}$.

Therefore,  to
formulate a multigrid strategy, it is quite natural to follow a
functional approach and to impose the prolongation operator
$p_{2h}^h: {\cal V}_{2h} \rightarrow {\cal V}_{h}$ to be defined
as the identity operator, that is
\[
p_{2h}^h    v_{2h}= v_{2h} \textrm{ for all } v_{2h} \in {\cal
V}_{2h}.
\]
Thus, the matrix representing the prolongation operator is formed,
column by column, by representing each function of the basis of
${\cal V}_{2h}$ as linear combination of the basis of ${\cal
V}_{h}$, the coefficients being the values of the functions
$\varphi_i^{2h}$ on the fine mesh grid points, i.e.,

\begin{equation}
\varphi_i^{2h} (x) = \sum_{x_j \in {\cal T}_{h} }
\varphi_i^{2h}(x_j) \varphi_j^{h} (x). \label{eq:comb_lin}
\end{equation}
In the following subsections, we   consider in detail the case of $\mathbb{Q}_k$ Finite Element approximation with $k=2$ and $k=3$,
the case $k=1$ being reported in short just for the sake of completeness.

\subsection{\texorpdfstring{$\mathbb{Q}_1$}{Q1} Case}
Firstly, let us  consider the case of $\mathbb{Q}_1$ Finite Elements, where, as is well known, the stiffness matrix is
the {scalar} Toeplitz matrix generated by $\mathbf{f}_{\mathbb{Q}_1}(\vartheta)=2-2\cos(\vartheta)$, and,
for the sake of simplicity, let us consider the case of ${\cal
T}_{2h}$ partitioning with five equispaced points (three internal points)
and ${\cal T}_{h}$ partitioning with nine equispaced points (seven
internal points) obtained from ${\cal T}_{2h}$ by considering the
midpoint of each subinterval.
{In the standard geometric multigrid, the prolongation operator matrix is {defined}  as}

\begin{equation}
P_{h\times 2h} = P_3^7= {
 \left [
\renewcommand{\arraystretch}{1.2}
\begin{array}{rrr}
\displaystyle \frac{1}{2} &              & \\
1                         &              & \\
\displaystyle \frac{1}{2} & \displaystyle \frac{1}{2}  & \\
                          & 1            & \\
                          & \displaystyle \frac{1}{2}  & \displaystyle \frac{1}{2} \\
                          &              & 1 \\
                          &              & \displaystyle\frac{1}{2} \\
\end{array}
\right ].
 }\label{eq:mat_prol_P1}
\end{equation}

Indeed, the basis functions with respect to the reference
interval $[0,1]$ are
\begin{eqnarray*}
\hat{\varphi}_1(\hat{x}) &=&1-\hat{x},   \\
\hat{\varphi}_2(\hat{x}) &=& \hat{x},
\end{eqnarray*}
and, according to
Equation      (\ref{eq:comb_lin}), the $\varphi_i^{2h}$ coefficients are

\[
\begin{array}{cccc}
\hat{\varphi}_2(1/2)={1/2},&  \hat{\varphi}_2(1)=1,& \hat{\varphi}_1(1/2)=1/2,
\end{array}
\]
giving the columns of the matrix in Equation      (\ref{eq:mat_prol_P1}).
However, we can think the prolongation matrix  above as the
product of the Toeplitz matrix generated by the polynomial
$p_{_{\mathbb{Q}_1}}(\vartheta)=1+\cos(\vartheta)$ and a suitable
cutting matrix, (see \cite{Fiorentino-Serra} for the terminology and the related notation) defined as

\begin{equation} K_{m_{s+1} \times m_s}=
{ \left [
\begin{array}{cccccccccccc}
0 & 1 & 0 \\
  &   & 0 & 1      & 0 \\
  &   &   &        & \ddots & \ddots & \ddots \\
  &   &   &        &        & 0      & 1      & 0 \\
\end{array}
\right ],} \label{eq:cutting_matrix}
\end{equation}

i.e.,
$P^{m_s}_{m_{s+1}} = (P^{m_{s+1}}_{m_s} )^T =
A_{m_s}(p_{_{\mathbb{Q}_1}}) (K_{m_{s+1} \times m_s})^T$.
%
\ \\
Two-grid/Multigrid convergence with the above defined restriction/prolongation operators and a simple smoother (for
instance, Gauss--Seidel iteration) is a classical result, both from the point of view of the literature of approximated differential operators \cite{Hack} and from the  point of view of the literature of structured matrices \cite{ADS,Fiorentino-Serra}.\\

{It should be noticed that the coarse grid correction is chosen in such a way that the corresponding iteration, which is not convergent, is very fast in the subspace where the original matrix is ill-conditioned: hence any simple smoother is usually effective, since it converges very fast in the well conditioned subspace, where conversely the coarse grid correction is non convergent. In other words the magic of the multigrid is the combination of at least two iterative solvers whose iteration matrices have spectral complementarity (see e.g. \cite{multi-iterative,ST-SISC} and references therein).}



In the first panel of Table \ref{tab:Pk1D}, we report the number of iterations needed for achieving the predefined tolerance $10^{-6}$, when increasing the matrix size in the setting of the current subsection. Indeed, we use $A_{m_s}(p_{_{\mathbb{Q}_1}}) (K_{m_{s+1} \times m_s})^T$ and its transpose as restriction and prolongation operators and Gauss--Seidel as a smoother. We {highlight} that only one iteration of pre-smoothing and only one iteration of post-smoothing are employed in the current numerics.  {Therefore, by considering the statement in Remark \ref{rmk:condition_scalar} and the subsequent explanation,  there is no surprise in observing that  the number of iterations needed for the two-grid and V-cycle convergence remains almost constant, when we increase the matrix size. The numerical results confirm the {predicted} optimality of the methods in the present scalar setting}.

\begin{table}[H]
	\centering
	\caption{Number of iterations needed for the convergence of the two-grid and V-cycle methods for $k = 1,2,3$ in one dimension with $a(x)\equiv 1$ and  ${\rm tol}=1 \times 10^{-6}$.} \label{tab:Pk1D}
	
\scalebox{0.95}[0.95]{
	\begin{tabular}{cccccccccc}
		\toprule
		& \multicolumn{2}{c}{\boldmath{$k=1$}} & \multicolumn{2}{c}{\boldmath{$k=2$}} & \multicolumn{2}{c}{\boldmath{$k=3$}}\\
		\midrule
		\textbf{$\#$ Subintervals} & \textbf{TGM} & \textbf{V-Cycle}  & \textbf{TGM} & \textbf{V-Cycle} & \textbf{TGM} & \textbf{V-Cycle}\\
		\midrule
		8   & 5 & 5 & 7 & 7 & 9 & 9 \\
		16  & 6 & 7 & 7 & 7 & 9 & 9 \\
		32  & 7 & 7 & 7 & 7 & 9 & 9 \\
		64  & 7 & 7 & 7 & 7 & 9 & 9 \\
		128 & 6 & 7 & 7 & 7 & 9 & 9 \\
		256 & 6 & 7 & 7 & 7 & 9 & 9 \\
		512 & 6 & 7 & 7 & 7 & 9 & 9 \\
	\bottomrule
	\end{tabular}}
\end{table}
\subsection{\texorpdfstring{$\mathbb{Q}_2$}{Q2} Case}
Let us  consider the case of $\mathbb{Q}_2$ Finite Elements, where
we have that the basis functions with respect to the reference
interval $[0,1]$ are

\begin{eqnarray*}
\hat{\varphi}_1(\hat{x}) &=& 2\hat{x}^2-3\hat{x} +1,   \\
\hat{\varphi}_2(\hat{x}) &=& -4\hat{x}^2+4\hat{x}, \\
\hat{\varphi}_3(\hat{x}) &=& 2\hat{x}^2 -\hat{x}.
\end{eqnarray*}

For the sake of simplicity, let us consider the case of ${\cal
T}_{2h}$ partitioning with five equispaced points (three~internal points)
and ${\cal T}_{h}$ partitioning with nine equispaced points (seven
internal points) obtained from ${\cal T}_{2h}$ by considering the
midpoint of each subinterval.

Thus, with respect to 
Equation      (\ref{eq:comb_lin}), the $\varphi_1^{2h}$ coefficients are
$$
\begin{array}{cccc}
\hat{\varphi}_2(1/4)=3/4, & \hat{\varphi}_2(1/2)=1,& \hat{\varphi}_2(3/4)=3/4, &
\hat{\varphi}_2(1)=0,
\end{array}
$$

while the $\hat{\varphi}_2^{2h}$ coefficients are

\[
\begin{array}{llll}
\hat{\varphi}_3(1/4)= -1/8, & \hat{\varphi}_3(1/2)=0, &
\hat{\varphi}_3(3/4)=3/8,   & \hat{\varphi}_3(1)=1, \\
\hat{\varphi}_1(1/4)=3/8,   & \hat{\varphi}_1(1/2)=0, & \hat{\varphi}_1(3/4)=-1/8, &
\hat{\varphi}_1(1)=0,
\end{array}
\]
and so on again as for that first couple of basis functions.
Notice also that, to evaluate the coefficients, for the sake of simplicity, we are referring to the basis functions on
the reference interval, as depicted in Figure
\ref{fig:costr_prol_P2}. As a conclusion, the obtained prolongation
matrix is as follows
\begin{equation}
 P_{h\times 2h} =
 P_3^7= {
 \left [
\renewcommand{\arraystretch}{1.2}
\begin{array}{rrr}
\displaystyle \frac{3}{4} & \displaystyle -\frac{1}{8} & \\
1           & 0            & \\
\displaystyle \frac{3}{4} & \displaystyle \frac{3}{8}  & \\
 0          & 1            & \\
            & \displaystyle \frac{3}{8}  & \displaystyle\frac{3}{4} \\
            & 0            & 1 \\
            & \displaystyle -\frac{1}{8} & \displaystyle\frac{3}{4} \\
\end{array}
\right ]
 .}\label{eq:mat_prol_P2}
\end{equation}

\begin{figure}[H]
\centering
\includegraphics[width=0.95\textwidth]{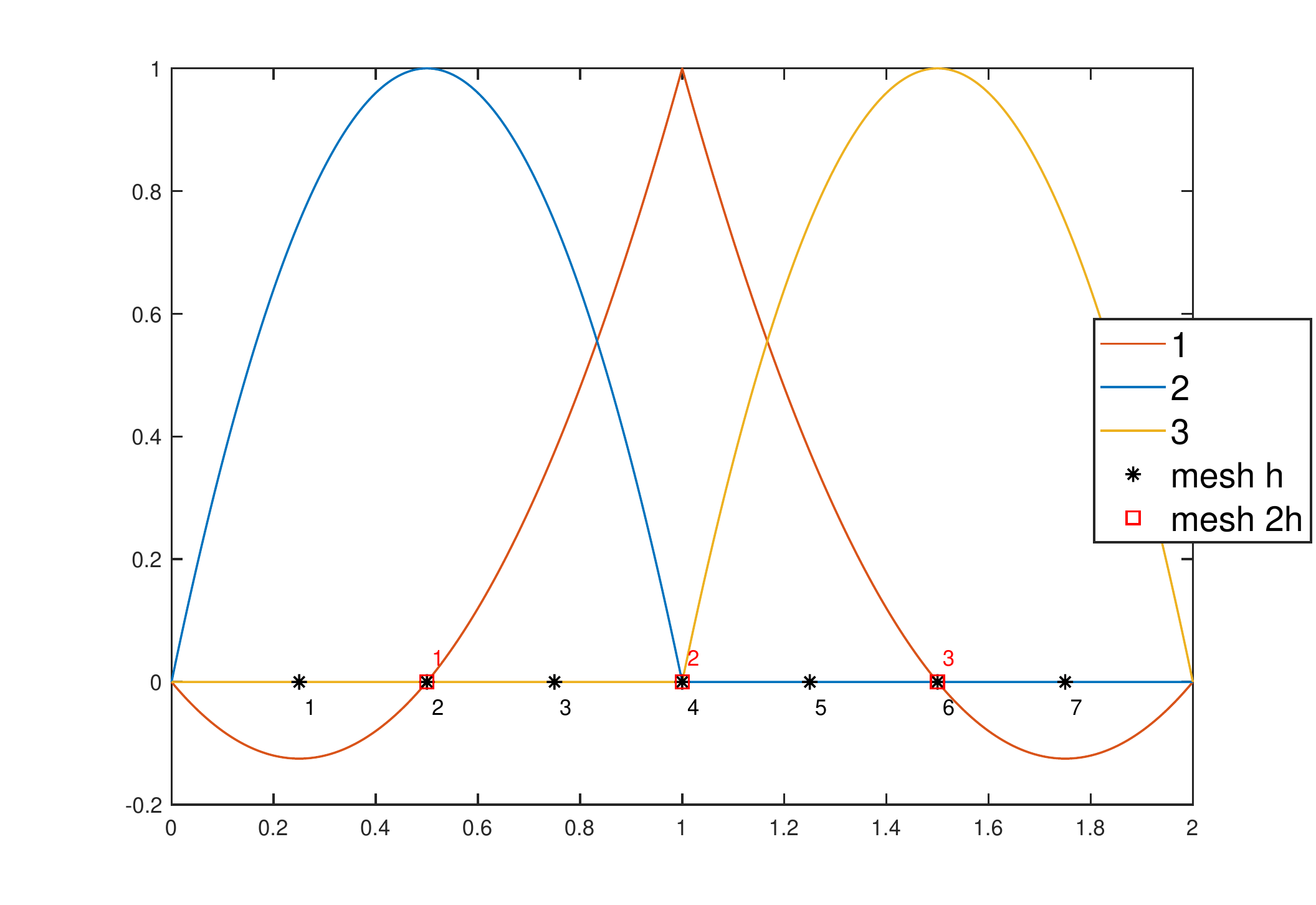}
\caption{Construction of the $\mathbb{Q}_2$ prolongation
operator: basis functions on the reference element.} \label{fig:costr_prol_P2}
\end{figure}
%
Hereafter, we are interested in setting such a geometrical
multigrid strategy, {proposed in \cite{Brandt,Hack,Hack2},} in the framework of the more general algebraic
multigrid theory and in particular in the one  driven by the
matrix symbol analysis. To this end, we   represent the
prolongation operator quoted above as the product of a Toeplitz
matrix generated by a polynomial $\mathbf{p}_{_{\mathbb{Q}_2}}$ and a
suitable cutting matrix. {We recall} that the Finite Element  stiffness matrix
could be thought as a principal submatrix of a Toeplitz matrix
generated by the matrix-valued symbol {that, from Equation      (\ref{fr(theta)}), has the compact form}

\begin{equation}
 \mathbf{f}_{\mathbb{Q}_2}(\vartheta) =\left [
 \begin{array}{rr}
                     \frac{16}{3} &           - \frac{8}{3} (1+e^{\hat{i} \vartheta}) \\
& \\
- \frac{8}{3} (1+e^{-\hat{i} \vartheta})    &             \frac{14}{3}+ \frac{1}{3} (e^{\hat{i} \vartheta}+e^{\hat{i} \vartheta}) \\
\end{array} \right  ].
\end{equation}
{Thus,} it is quite natural to look for a matrix-valued symbol for the
polynomial $\mathbf{p}_{_{\mathbb{Q}_2}}$ as well. In addition, the cutting
matrix is also formed through the Kronecker product of the scalar
cutting matrix in Equation      (\ref{eq:cutting_matrix}) and the identity matrix of order 2, so that
\begin{equation*}
P^{m_s}_{m_{s+1}} = (P^{m_{s+1}}_{m_s} )^T =
A_{m_s}(p_{_{\mathbb{Q}_2}}) ((K_{m_{s+1} \times m_s})^T \otimes
I_2).
\end{equation*}

Taking into account the action of the cutting matrix $(K_{m_{s+1}
\times m_s})^T \otimes I_2$, we can easily identify from
Equation      (\ref{eq:mat_prol_P2}) the generating polynomial as

\begin{equation}
\mathbf{p}_{_{\mathbb{Q}_2}}(\vartheta)=K_0+K_1 e^{\hat{i}\vartheta}+K_{-1}
e^{-\hat{i} \vartheta}+K_2 e^{2\hat{i}\vartheta}+K_{-2} e^{-2\hat{i}
\vartheta} \label{eq:p2}.
\end{equation}
where

\[
K_0={ \left [  \begin{array}{rr} \frac{3}{4} & \frac{3}{8} \\ 0 & 1 \\
\end{array}\right ]},\  K_1={ \left [  \begin{array}{rr} 0 & \frac{3}{8} \\ 0 & 0\\
\end{array}\right ]},\
K_{-1}={ \left [  \begin{array}{rr} \frac{3}{4} & -\frac{1}{8}\\ 1 & 0  \\
\end{array}\right ]},\  K_2={ \left [  \begin{array}{rr} 0 & -\frac{1}{8} \\ 0 & 0 \\
\end{array}\right ]},\  K_{-2}=O_{2},
\]
 that is

\begin{equation*}
 \mathbf{p}_{_{\mathbb{Q}_2}}(\vartheta) =\left [
 \begin{array}{rr}
 \frac{3}{4}(1+e^{-\hat{i}\vartheta}) & \quad
 \frac{3}{8} (1 + e^{\hat{i} \vartheta}) - \frac{1}{8}( e^{-\hat{i}  \vartheta} +e^{2\hat{i} \vartheta}) \\
 \\
                    e^{-\hat{i} \vartheta}&                    1 \\
\end{array} \right  ].
\end{equation*}

A very preliminary analysis, just by computing the determinant of
$\mathbf{p}_{_{\mathbb{Q}_2}}(\vartheta)$ shows there is a zero of third
order in the mirror point $\vartheta=\pi$, being
\[
\textrm{det}(\mathbf{p}_{_{\mathbb{Q}_2}}(\vartheta))=\frac{1}{8}
e^{-2\hat{i} \vartheta} (e^{\hat{i}  \vartheta} +1)^3.
\]
{We highlight that our choices are in agreement with the mathematical conditions set in Items \textbf{(A)} and \textbf{(B)}. Condition {\bf (C)} is violated and we discuss it in Remark \ref{rmk:singular_commutator}.
Nevertheless, it is possible to derive the following TGM convergence and optimality sufficient conditions that should be verified by $\mathbf{f}$ and $\mathbf{p}=\mathbf{p}_{_{\mathbb{Q}_2}}$,  exploiting the idea in the proof of the main result of \cite{multi-block}:}

\begin{eqnarray}
\mathbf{p}(\vartheta)^*\mathbf{p}(\vartheta)+\mathbf{p}(\vartheta+\pi)^* \mathbf{p}(\vartheta+\pi) &>& {O_k}\, \textrm{ for all } \vartheta \in [0,2\pi] \label{eqn:cond_1} \\
R(\vartheta) &\le& \gamma I_{2k} \label{eqn:cond_2}
\end{eqnarray}
with

\[
R(\vartheta)=
  \begin{bmatrix}
\mathbf{f}(\vartheta) & \\
& \mathbf{f}(\vartheta+\pi)
\end{bmatrix}^{-\frac{1}{2}}
    \left( I_{2k} - \left[  \begin{array}{c} \!\!\!  \mathbf{p}(\vartheta) \!\!\! \\  \!\!\!
    \mathbf{p}(\vartheta+\pi)\!\!\!
    \end{array}\right ] \!\!\!
    \,\, q(\vartheta) \,\, \!\!\!
    \left[ \begin{array}{cc} \!\!\! \mathbf{p}(\vartheta)^* \!\!\! & \!\!\! \mathbf{p}(\vartheta+\pi)^* \!\!\! \end{array}\right ]
\right )
 \begin{bmatrix}
\mathbf{f}(\vartheta) & \\
& \mathbf{f}(\vartheta+\pi)
\end{bmatrix}^{-\frac{1}{2}},
\]

{where $q(\vartheta) = \left[ \mathbf{p}(\vartheta)^*\mathbf{p}(\vartheta)+\mathbf{p}(\vartheta+\pi)^* \mathbf{p}(\vartheta+\pi) \right]^{-1}$,  $\gamma>0$  is a constant independent on $n$. The condition in} Equation      (\ref{eqn:cond_2})   requires the matrix-valued function
$R(\vartheta)$ to be uniformly bounded in the spectral norm. These conditions are { obtained from the proof of the main convergence result in  \cite{multi-block}, where, after several numerical derivations, {it was concluded that the above conditions are the final requirements needed. } }

To this end, we have explicitly formed the matrices involved in the
conditions in   Equations      (\ref{eqn:cond_1}) and        (\ref{eqn:cond_2}) and computed
their eigenvalues for $\vartheta \in [0, 2\pi]$. The results are
reported in Figure \ref{fig:check12_cond_prol_P2} and are in
perfect agreement with the theoretical requirements.

In the second panel of Table \ref{tab:Pk1D}, we report the number of iterations needed for achieving the predefined tolerance
$10^{-6}$,   when increasing the matrix size in the setting of the current subsection. Indeed, we use $A_{m_s}(p_{_{\mathbb{Q}_2}}) (K_{m_{s+1} \times m_s})^T$ and its transpose as restriction and prolongation operators and Gauss--Seidel as a smoother. Again, we remind that only one iteration of pre-smoothing and only one iteration of post-smoothing are employed in our numerical setting.

As expected, we observe that the number of iterations needed for the two-grid convergence remains constant, when we increase the matrix size, numerically confirming the optimality of the~method.

Moreover, we notice that the V-cycle method is characterised by optimal convergence properties. Although this behaviour is expected from the point of view of differential approximated operators, it is interesting in the setting of algebraic multigrid methods. Indeed, constructing an optimal V-cycle method for matrices in this block setting might require a specific analysis of the spectral properties of the restricted operators, (see \cite{multi-block}).
\begin{figure}[H]
\centering
\includegraphics[width=\textwidth,trim=3.5cm 3cm 3.5cm 3cm,clip]{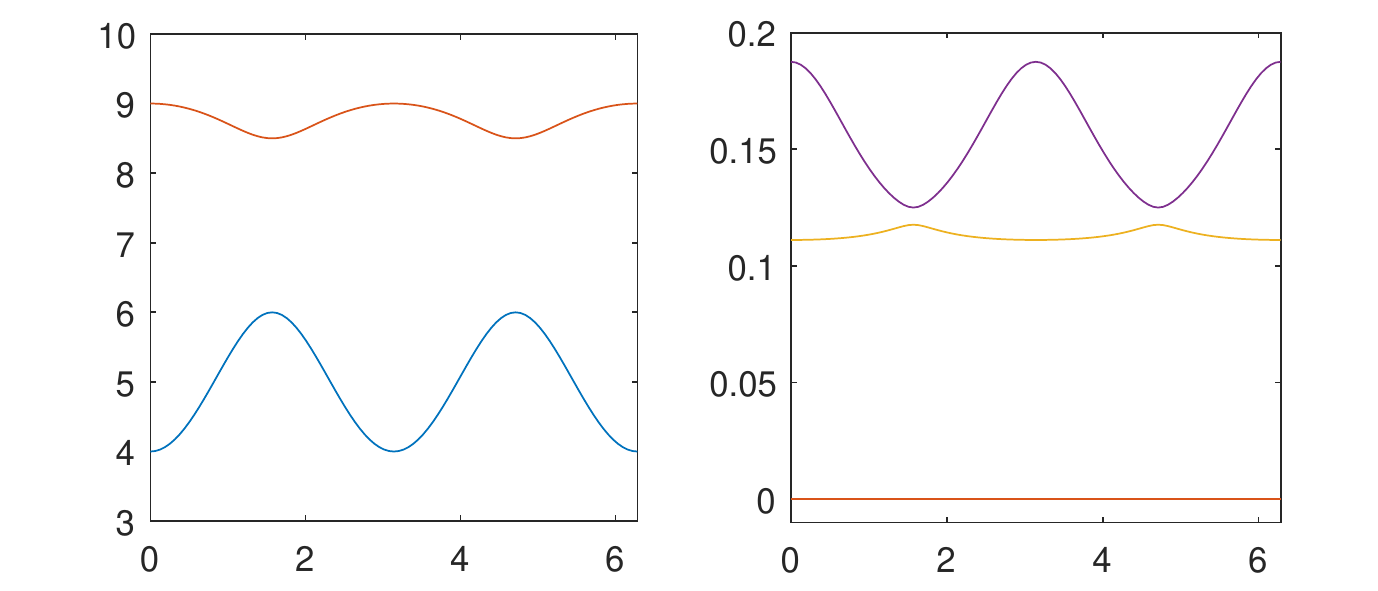} \,
\caption{Check of conditions for $\mathbb{Q}_2$ prolongation: (\textbf{left}) the plot of the eigenvalues of $\mathbf{p}(\vartheta)^*\mathbf{p}(\vartheta)+\mathbf{p}(\vartheta+\pi)^* \mathbf{p}(\vartheta+\pi)$ for $\vartheta \in [0, 2\pi]$; and  (\textbf{right}) the plot of the eigenvalues of $R(\vartheta)$ for $\vartheta \in [0, 2\pi]$.}
\label{fig:check12_cond_prol_P2}
\end{figure}
%
%

\subsection{\texorpdfstring{$\mathbb{Q}_3$}{Q3} Case}

Hereafter, we briefly summarize the case of $\mathbb{Q}_3$ Finite
Elements, following the very same path we already considered in
the previous section for $\mathbb{P}_2$ Finite Elements. The basis functions with respect to the reference interval $[0,1]$ are

\begin{eqnarray}
\hat{\varphi}_1(\hat{x}) &=& - \frac{9}{2}\hat{x}^3 + 9\hat{x}^2 - \frac{11}{2}\hat{x} + 1,  \nonumber \\
\hat{\varphi}_2(\hat{x}) &=& \frac{27}{2}\hat{x}^3 - \frac{45}{2}\hat{x}^2 + 9\hat{x},  \\
\hat{\varphi}_3(\hat{x}) &=& - \frac{27}{2} \hat{x}^3 + 18\hat{x}^2 - \frac{9}{2}\hat{x}, \nonumber \\
\hat{\varphi}_4(\hat{x}) &=& \frac{9}{2}\hat{x}^3 - \frac{9}{2}\hat{x}^2 +
\hat{x}. \nonumber
\end{eqnarray}

For the sake of simplicity, let us consider the case of ${\cal
T}_{2h}$ partitioning with seven equispaced points (five~internal points)
and ${\cal T}_{h}$ partitioning with 13 equispaced points (11
internal points) obtained from ${\cal T}_{2h}$ by considering the
midpoint of each subinterval.

Thus, with respect to 
Equation      (\ref{eq:comb_lin}), (see also Figure \ref{fig:costr_prol_P3}),
the $\varphi_1^{2h}$ coefficients are

\[
\begin{array}{llll}
\hat{\varphi}_2(1/6)=15/16, & \hat{\varphi}_2(1/3)=1,&
\hat{\varphi}_2(1/2)=9/16, \\
\hat{\varphi}_2(2/3)=0, & \hat{\varphi}_2(5/6)=-5/16, & \hat{\varphi}_2(1)=0,
\end{array}
\]

while, the $\varphi_2^{2h}$ coefficients are

\[
\begin{array}{llll}
\hat{\varphi}_3(1/6)= -5/16, & \hat{\varphi}_3(1/3)=0, &
\hat{\varphi}_3(1/2)=9/16, \\
\hat{\varphi}_3(2/3)= 1, & \hat{\varphi}_3(5/6)=15/16, & \hat{\varphi}_3(1)=0,
\end{array}
\]

and the $\varphi_3^{2h}$ coefficients are

\[
\begin{array}{llll}
\hat{\varphi}_4(1/6)= 1/16, & \hat{\varphi}_4(1/3)=0, &
\hat{\varphi}_4(1/2)=-1/16, \\
\hat{\varphi}_4(2/3)= 0, &
\hat{\varphi}_4(5/6)=5/16, & \hat{\varphi}_4(1)=1,\\
\hat{\varphi}_1(1/6)= 5/16, & \hat{\varphi}_1(1/3)=0,&
\hat{\varphi}_1(1/2)=-1/16, \\
\hat{\varphi}_1(2/3)=0, & \hat{\varphi}_1(5/6)=1/16, & \hat{\varphi}_1(1)=0.
\end{array}
\]
Thus, the obtained prolongation matrix is as follows:
\begin{equation}
P_{h\times 2h} = P_5^{11}=  {\left [
 \renewcommand{\arraystretch}{1.2}
\begin{array}{rrrrr}
\displaystyle \frac{15}{16} & \displaystyle -\frac{5}{16} & \displaystyle \frac{1}{16}           &             & \\
1 & 0 & 0 & & \\
\displaystyle \frac{9}{16} & \displaystyle \frac{9}{16} & \displaystyle -\frac{1}{16}           &             & \\
 0          & 1            & 0 & & \\
\displaystyle -\frac{5}{16} & \displaystyle \frac{15}{16} & \displaystyle \frac{5}{16}           &             & \\
 0           & 0           & 1 & \\
& & \displaystyle \frac{5}{16} & \displaystyle \frac{15}{16} & \displaystyle -\frac{5}{16}    \\
        &    & 0       & 1  & 0 \\
& & \displaystyle -\frac{1}{16} & \displaystyle \frac{9}{16} & \displaystyle \frac{9}{16}    \\
        &    & 0       & 0  & 1 \\
& & \displaystyle \frac{1}{16} & \displaystyle -\frac{5}{16} & \displaystyle \frac{15}{16}    \\
\end{array}
\right ]}. \label{eq:mat_prol_P3}
\end{equation}
Therefore, taking into consideration that the stiffness matrix is a principal
submatrix of the Toeplitz matrix generated by the matrix-valued
function

\begin{equation}
 \mathbf{f}_{\mathbb{Q}_3}(\vartheta) =\left [
  \renewcommand{\arraystretch}{1.2}
 \begin{array}{rrr}
   \frac{54}{5}   &      -\frac{297}{40} &  \frac{27}{20}- \frac{189}{40}e^{\hat{i} \vartheta}  \\
  -\frac{297}{40} &       \frac{54}{5}   &  -\frac{189}{40} + \frac{27}{20}e^{\hat{i} \vartheta} \\
\frac{27}{20}- \frac{189}{40}e^{-\hat{i} \vartheta}   &
-\frac{189}{40} + \frac{27}{20}e^{-\hat{i} \vartheta} & \frac{37}{5}
- \frac{13}{40}(e^{\hat{i} \vartheta}+ e^{-\hat{i} \vartheta})
\end{array}
\right  ],
\end{equation}

we are looking for the matrix-valued symbol $\mathbf{p}_{_{\mathbb{Q}_3}}$
as well.
By defining
\begin{equation*}
P^{m_s}_{m_{s+1}} = (P^{m_{s+1}}_{m_s} )^T =
A_{m_s}(p_{_{\mathbb{Q}_3}}) ((K_{m_{s+1} \times m_s})^T \otimes
I_3)
\end{equation*}
it is easy to identify the generating polynomial as
\begin{equation}
\mathbf{p}_{_{\mathbb{Q}_3}}(\vartheta)=K_0+K_1 e^{\hat{i}\vartheta}+K_{-1}
e^{-\hat{i} \vartheta}+K_2 e^{2\hat{i}\vartheta}+K_{-2} e^{-2\hat{i}
\vartheta} \label{eq:p3},
\end{equation}

where
\[
K_0={ \left [  \begin{array}{rrr} 0 & 1 & 0\\ -\frac{5}{16}
&
\frac{15}{16} & \frac{5}{16} \\ 0 & 0 & 1\\
\end{array}\right ]},\
K_1={  \left [  \begin{array}{rrr} 0 & 0 & \frac{5}{16}\\ 0 &  0&  0\\ 0 &  0 & -\frac{1}{16}\\
\end{array}\right ]}, \
K_{-1}={  \left [  \begin{array}{rrr} \frac{15}{16}  &  -\frac{5}{16}  & \frac{1}{16} \\ 1 & 0 & 0\\ \frac{9}{16}  & \frac{9}{16}  & -\frac{1}{16}   \\
\end{array}\right ]},
\]
\[
K_2={  \left [  \begin{array}{rrr} 0  & 0  & 0\\ 0 & 0  & \frac{1}{16}\\ 0  & 0  & 0\\
\end{array}\right ]},\ \  K_{-2}=O_{3},
\]
 that is

 \begin{equation}
 \mathbf{p}_{_{\mathbb{Q}_3}}(\vartheta) =\left[
  \begin{array}{rrr}
 \frac{15}{16}e^{-\hat{i} \vartheta} & 1 - \frac{5}{16}e^{-\hat{i} \vartheta} &  \frac{1}{16}e^{-\hat{i} \vartheta} +
\frac{5}{16}e^{\hat{i} \vartheta} \\
e^{-\hat{i} \vartheta} - \frac{5}{16} &                \frac{15}{16} & \frac{5}{16}+\frac{1}{16}e^{2\hat{i} \vartheta} \\
  \frac{9}{16}e^{-\hat{i} \vartheta}&     \frac{9}{16}e^{-\hat{i} \vartheta} & 1 - \frac{1}{16}(e^{\hat{i} \vartheta}
  + e^{-\hat{i} \vartheta})
\end{array} \right].
\end{equation}

A trivial computation shows again {that} there is a zero of fourth
order in the mirror point $\vartheta=\pi$, being

\[
\textrm{det}(\mathbf{p}_{_{\mathbb{Q}_3}}(\vartheta))=\frac{1}{64}
e^{-3\hat{i} \vartheta} (e^{\hat{i}  \vartheta} +1)^4.
\]

However, the main goal is to verify the conditions in   Equations      (\ref{eqn:cond_1})
and        (\ref{eqn:cond_2}): we have explicitly formed the matrices
involved and computed their eigenvalues for $\vartheta \in [0,
2\pi]$. The results are in perfect agreement with the theoretical
requirements, (see Figure \ref{fig:check12_cond_prol_P3}). {This analysis links the geometric approach proposed in~\cite{Brandt,Hack,Hack2} to the novel algebraic multigrid methods for block-Toeplitz matrices.}

In the third panel of Table \ref{tab:Pk1D}, we report the number of iterations needed for achieving the predefined tolerance $10^{-6}$,  when increasing the matrix size in the setting of the current subsection. Indeed, we use $A_{m_s}(p_{_{\mathbb{Q}_3}}) (K_{m_{s+1} \times m_s})^T$ and its transpose as restriction and prolongation operators and Gauss--Seidel as a smoother (one iteration of pre-smoothing and one iteration of post-smoothing).

As expected, we observe that the number of iterations needed for the two-grid convergence remains constant, when we increase the matrix size, numerically confirming the optimality of the method. In analogy to the $\mathbb{Q}_2$ case, we notice that the V-cycle methods show the same optimal convergence properties.

Comparing the three panels in Table \ref{tab:Pk1D}, we also notice a mild dependency  of the number of iterations on the polynomial degree $k$. {
In addition, we can see in Tables \ref{tab:Pk21D} and \ref{tab:Pk31D} that the optimal behaviour of the two-grid and V-cycle methods for $k = 2,3$ remains unchanged if we test different tolerance values.}

\begin{table}[H]
    \centering

     \caption{{Number of iterations needed for the convergence of the two-grid and V-cycle methods for $k = 2$ in one dimension with $a(x)\equiv 1$ and ${\rm tol}=1 \times 10^{-2}, 1 \times 10^{-4},$ and $ 1 \times 10^{-8}$.}}  \label{tab:Pk21D}
      \scalebox{0.95}[0.95]{
    \color{black}{ \begin{tabular}{cccccccccc}
        \toprule
        & \multicolumn{2}{c}{\boldmath{${\rm tol}=1 \times 10^{-2}$}} & \multicolumn{2}{c}{\boldmath{${\rm tol}=1 \times 10^{-4}$}} & \multicolumn{2}{c}{\boldmath{${\rm tol}=1 \times 10^{-8}$}} \\
        \midrule
        \textbf{$\#$ Subintervals} & \textbf{TGM} & \textbf{V-Cycle}  & \textbf{TGM} & \textbf{V-Cycle}  & \textbf{TGM} & \textbf{V-Cycle}  \\
       \midrule
         8  & 3 & 3 & 5 & 5 & 8 & 8  \\
        16  & 3 & 3 & 5 & 5 & 9 & 9  \\
        32  & 3 & 3 & 5 & 5 & 9 & 10 \\
        64  & 3 & 3 & 5 & 5 & 9 & 10 \\
        128 & 3 & 3 & 5 & 5 & 9 & 10 \\
        256 & 3 & 3 & 5 & 5 & 9 & 10 \\
        512 & 3 & 3 & 5 & 5 & 9 & 10 \\
         \bottomrule
    \end{tabular}}}
\end{table}

\begin{table}[H]
    \centering
     \caption{ {Number of iterations needed for the convergence of the two-grid and V-cycle methods for $k = 3$ in one dimension with $a(x)\equiv 1$ and ${\rm tol}=1 \times 10^{-2}, 1 \times 10^{-4},$ and $ 1 \times 10^{-8}$.}} \label{tab:Pk31D}
\scalebox{0.95}[0.95]{
    \color{black}{\begin{tabular}{cccccccccc}
        \toprule
        & \multicolumn{2}{c}{\boldmath{${\rm tol}=1 \times 10^{-2}$}} & \multicolumn{2}{c}{\boldmath{${\rm tol}=1 \times 10^{-4}$}} & \multicolumn{2}{c}{\boldmath{${\rm tol}=1 \times 10^{-8}$}} \\
        \midrule
        \textbf{$\#$ Subintervals} & \textbf{TGM} & \textbf{V-Cycle} & \textbf{TGM} & \textbf{V-Cycle}  & \textbf{TGM} & \textbf{V-Cycle}  \\
         \midrule
         8  & 3 & 3 & 6 & 6 & 12  & 12  \\
        16  & 3 & 3 & 6 & 6 & 12  & 12  \\
        32  & 3 & 3 & 6 & 6 & 12  & 12  \\
        64  & 3 & 3 & 6 & 6 & 12  & 12  \\
        128 & 3 & 3 & 6 & 6 & 12  & 12  \\
        256 & 3 & 3 & 6 & 6 & 12  & 12  \\
        512 & 3 & 3 & 6 & 6 & 12  & 12  \\
         \bottomrule
    \end{tabular}}}
\end{table}

\begin{Remark}\label{rmk:singular_commutator}
{From} the cases analysed in this section, we notice that, even though $\mathbf{p}(0)$ and $\mathbf{p}(\pi)$ do not commute, the two-grid method is still convergent and optimal. The latter commutation property, along with Conditions {\bf (A)} and {\bf (B)} reported in {Remark }\ref{rmk:condition_scalar}, is sufficient to have optimal convergence of the two-grid method. This analysis reveals that commutativity is not a necessary property. Indeed, in our examples, we show  that the operator $R(\vartheta)$ is uniformly bounded in the spectral norm.

However, we notice that in all cases the commutator $S_{\mathbb{Q}_k}(\vartheta)=\mathbf{p}_{_{\mathbb{Q}_k}}(\vartheta)\mathbf{p}_{_{\mathbb{Q}_k}}(\vartheta+\pi)-\mathbf{p}_{_{\mathbb{Q}_k}}(\vartheta)\mathbf{p}_{_{\mathbb{Q}_k}}(\vartheta+\pi)$ computed in 0 is a singular matrix. In particular, computing our commutator matrix $S_{_{\mathbb{Q}_k}}(\vartheta)$ in $\vartheta=0$, we~obtain:

\[
S_{_{\mathbb{Q}_2}}(0)=\frac{1}{2}\begin{pmatrix}
            -1 & 1 \\
            -1 & 1 \\
          \end{pmatrix}, \qquad
S_{_{\mathbb{Q}_3}}(0)=\frac{1}{256}\begin{pmatrix}
       -462 & 330 & 132 \\
       -438 & 354 & 84 \\
       -378 & 270 & 108 \\
     \end{pmatrix},
\]

which are indeed singular matrices.
\end{Remark}
\begin{figure}[H]
\centering
\includegraphics[width=\textwidth,trim=2cm 1.5cm 2cm 1cm,clip]{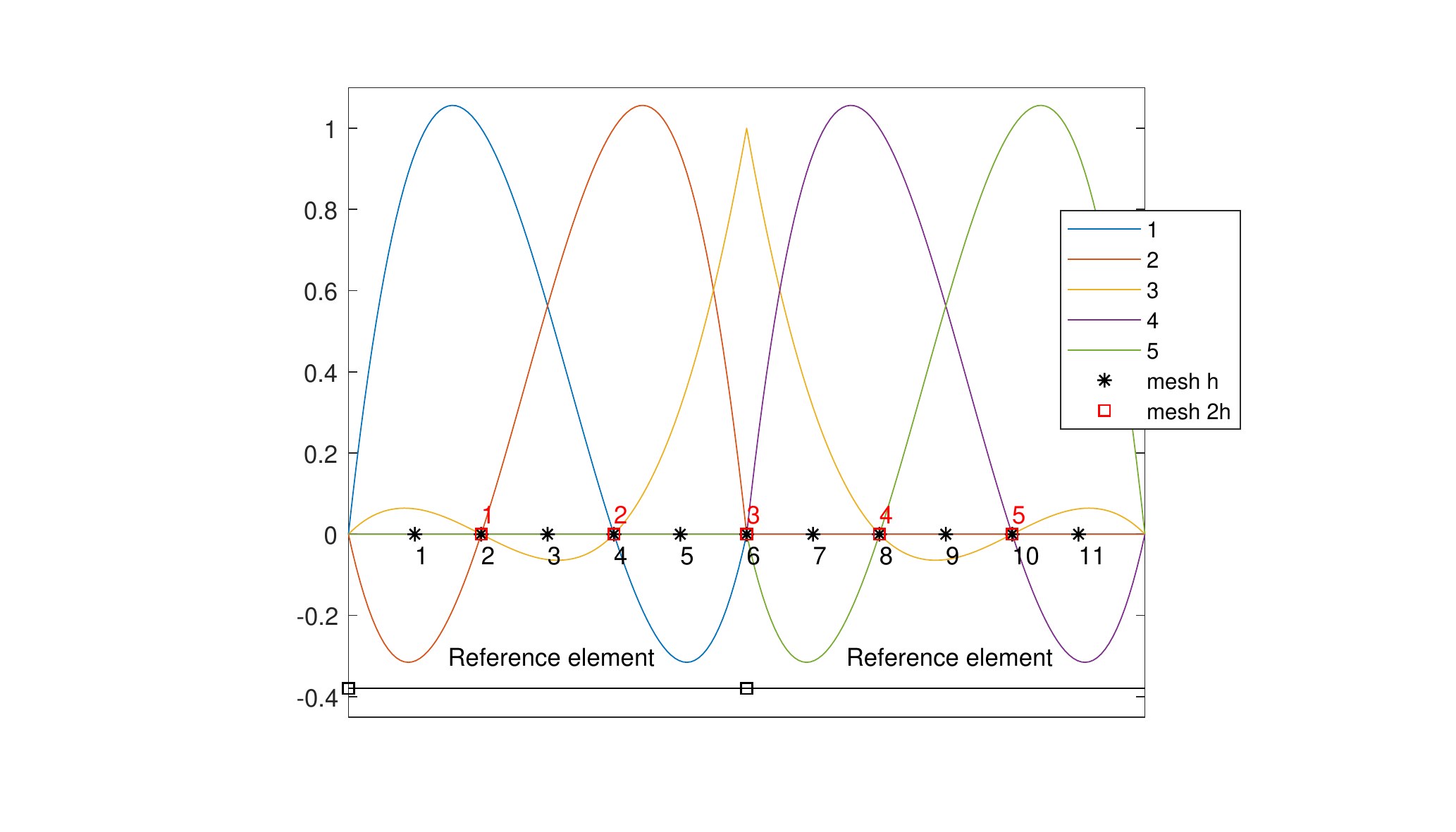}
\caption{Construction of the $\mathbb{Q}_3$ prolongation
operator: basis functions on the reference element.}
\label{fig:costr_prol_P3}
\end{figure}
\begin{figure}[H]
\centering
\includegraphics[width=0.47\textwidth,trim=3.5cm 1cm 3.5cm 1cm,clip]{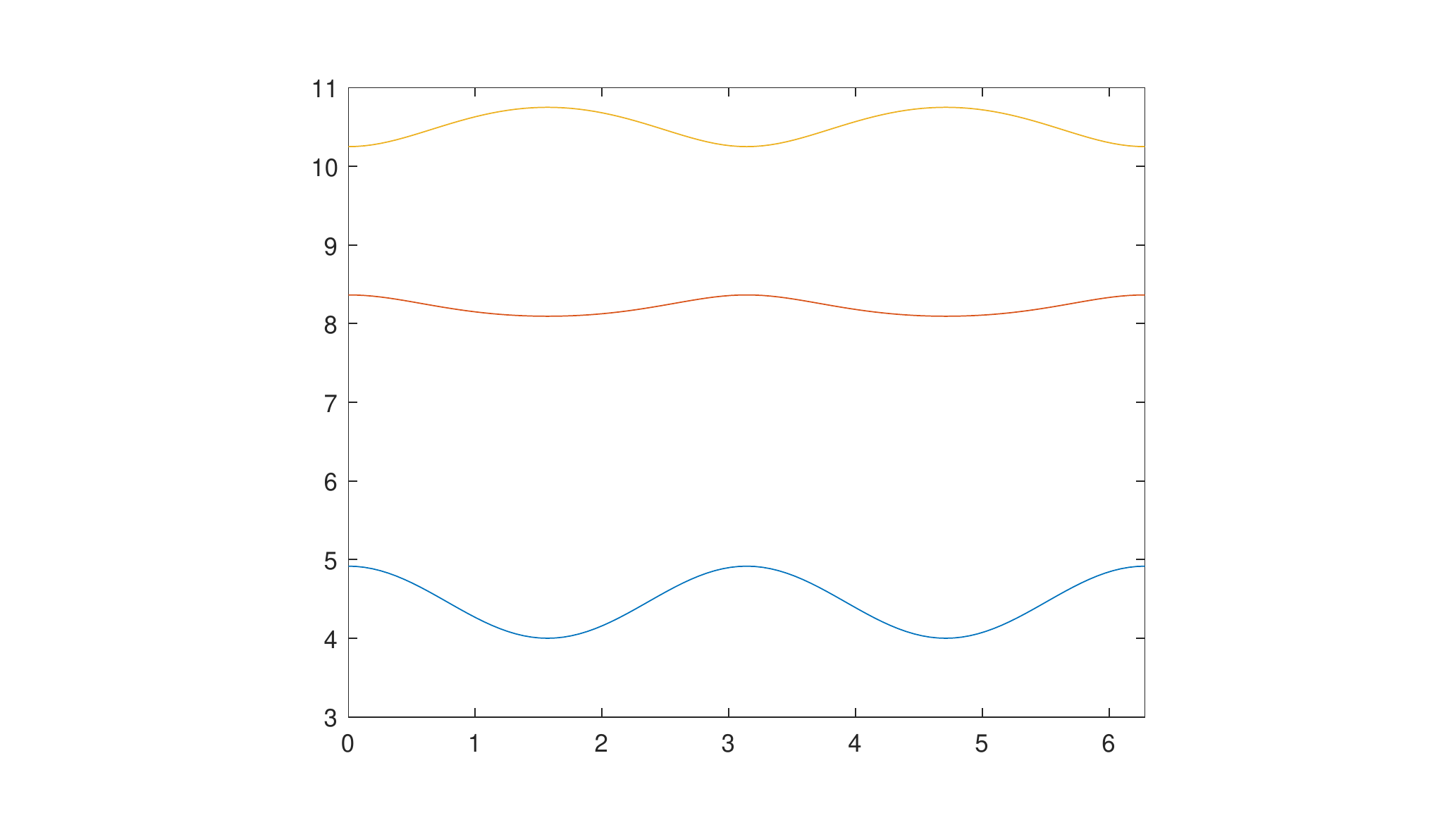} \,
\includegraphics[width=0.47\textwidth,trim=3.5cm 1cm 3.5cm 1cm,clip]{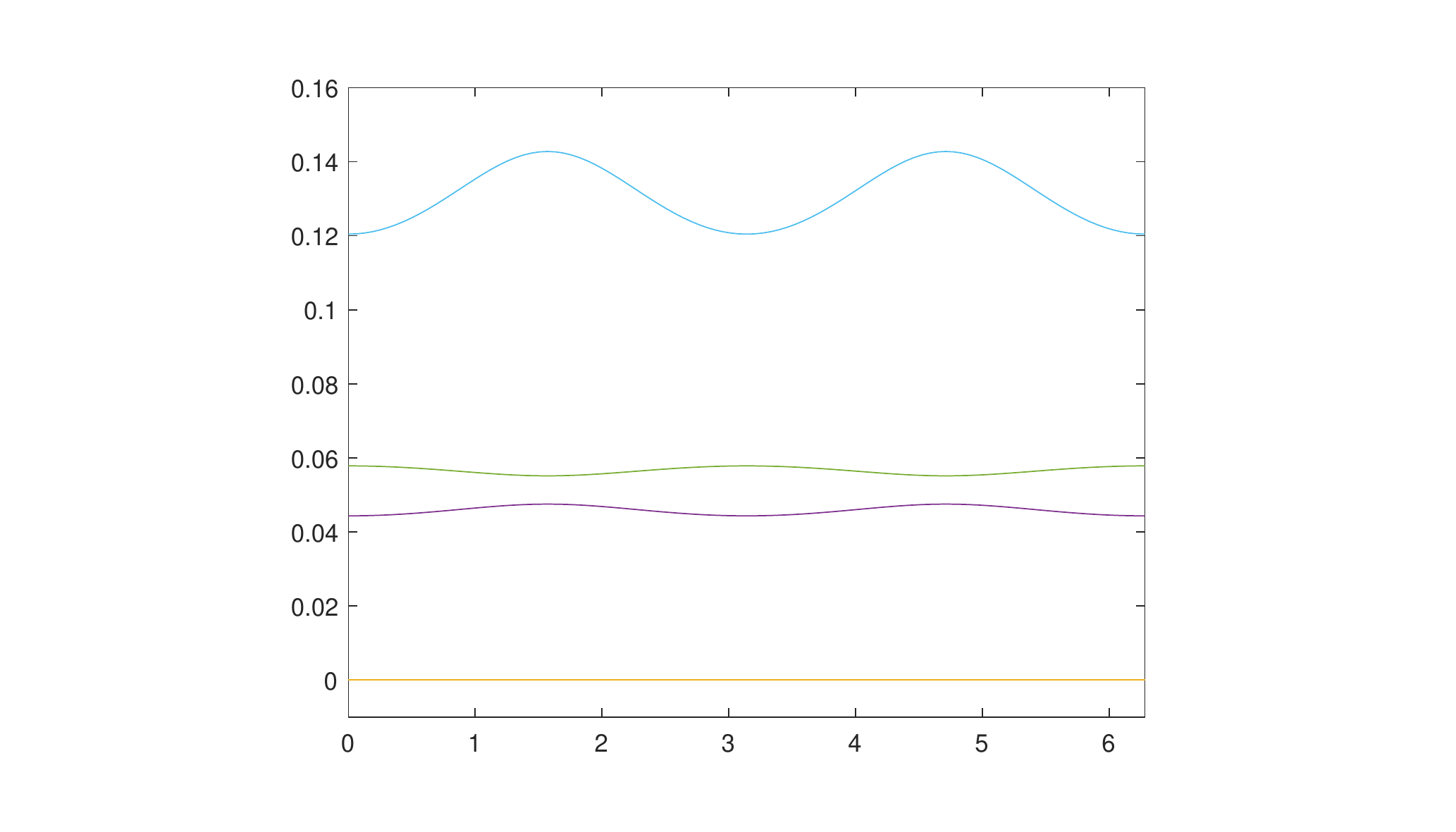}
\caption{Check of conditions for $\mathbb{Q}_3$ prolongation{:}  {(\textbf{left})} the plot of the eigenvalues of $p(\vartheta)^Hp(\vartheta)+p(\vartheta+\pi)^H p(\vartheta+\pi)$ for $\vartheta \in [0, 2\pi]${; and}  {(\textbf{right})} the plot of the eigenvalues of $R(\vartheta)$ for $\vartheta \in [0, 2\pi]$.}
\label{fig:check12_cond_prol_P3}
\end{figure}

\begin{Remark}\label{rmk:numer larger d}
It is worth stressing that the results hold also in dimension $d\ge 2$. In fact, interestingly, we observe that the dimensionality $d$ does not affect the efficiency of the proposed method, as well shown in Table \ref{tab:Qk2D} for the case $d=2$.
We finally remind that the tensor structure of the resulting matrices highly facilitates the generalization and extension of the numerical code to the case of $d\ge 2$.  {Indeed,    the prolongation operators in the multilevel setting are constructed by a proper tensorization of those in 1D.}
 \end{Remark}
 \begin{table}[H]
\small \centering
 \caption{Number of iterations needed for the convergence of the two-grid and V-cycle methods for $k = 1,2,3$ in  dimension $d=2$ with $a(\mathbf{x})\equiv 1$.} \label{tab:Qk2D}
\begin{tabular}{cccccccccccc}
\toprule
 & \multicolumn{2}{c}{\boldmath{$k=1$}} & & \multicolumn{2}{c}{\boldmath{$k=2$}} & & \multicolumn{2}{c}{\boldmath{$k=3$}}\\
\midrule
\textbf{$\#$} & \textbf{Two} & \textbf{V-} &\textbf{ $\#$} &\textbf{ Two} & \textbf{V-}  & \textbf{$\#$} & \textbf{Two} & \textbf{V- } \\
\textbf{Nodes} & \textbf{Grid} & \textbf{Cycle}  & \textbf{Nodes} & \textbf{Grid} & \textbf{Cycle}  & \textbf{Nodes} & \textbf{Grid} & \textbf{Cycle}  \\
\midrule
$7^2$   & 5 & 5  & $15^2$  & 6 & 6 &  $23^2$  & 7 & 7  \\
$15^2$  & 5 & 6  & $31^2$  & 6 & 6 &  $47^2$  & 7 & 7  \\
$31^2$  & 5 & 6  & $63^2$  & 6 & 6 &  $95^2$  & 7 & 7  \\
$63^2$  & 5 & 6  & $127^2$ & 6 & 6 &  $191^2$ & 7 & 7   \\
$127^2$ & 5 & 6  & $255^2$ & 6 & 6 &  $383^2$ & 7 & 7  \\
\bottomrule
\end{tabular}
\end{table}

{ Furthermore, we highlight that the presented analysis for  $a\equiv 1$ can be easily extended to the case on non-constant coefficients $a(x)\neq 1$ in 1D and $a(x,y)\neq 1$ in 2D, since, following a geometric approach, the prolongation operators for the general variable coefficients remain unchanged. In Tables~\ref{tab:Pk21D_acoeff} and \ref{tab:Pk22D_acoeff}, we show the number of iterations needed for the convergence of the two-grid and V-cycle methods for $k = 2$ in one and two dimensions for different values of $a \not\equiv 1$.}
\begin{table}[H]
    \centering
     \caption{{Number of iterations needed for the convergence of the two-grid and V-cycle methods for $k = 2$ in one dimension with $a(x)=e^x$, $a(x)=10x+1$,   $a(x)=|x-1/2|+1$,  and ${\rm tol}=1 \times 10^{-6}$.}} \label{tab:Pk21D_acoeff}
\scalebox{0.95}[0.95]{
    \color{black}{\begin{tabular}{cccccccccc}
        \toprule
        & \multicolumn{2}{c}{\boldmath{$a(x)=e^x$}} & \multicolumn{2}{c}{\boldmath{$a(x)=10x+1$}} & \multicolumn{2}{c}{\boldmath{$a(x)=|x-1/2|+1$}}\\
       \midrule
        \textbf{$\#$ Subintervals} & \textbf{TGM} & \textbf{V-Cycle}  & \textbf{TGM} & \textbf{V-Cycle}  & \textbf{TGM} & \textbf{V-Cycle}  \\
      \midrule
        8   & 7 & 7  & 11 & 11 &  7 & 7  \\
        16  & 7 & 7  & 9  & 12 &  7 & 7  \\
        32  & 7 & 8  & 7  & 14 &  7 & 7  \\
        64  & 7 & 8  & 7  & 14 &  7 & 7  \\
        128 & 7 & 8  & 7  & 15 &  7 & 7  \\
        256 & 7 & 8  & 7  & 15 &  7 & 7  \\
        512 & 7 & 8  & 7  & 14 &  7 & 7  \\
        \bottomrule
    \end{tabular}}}
\end{table}

\begin{table}[H]
\small \centering
 \caption{ {Number of iterations needed for the convergence of the two-grid and V-cycle  methods for $k = 2$
    in two dimensions with $a(x,y)=e^{(x+y)}$, $a(x,y)=10(x+y)+1$, $a(x,y)=|x-1/2|+|y-1/2|+1$,   $a(x,y)=1$ if $x\le 1/2$ and $y\le 1/2$, $5000$ otherwise, and ${\rm tol}=1 \times 10^{-6}$.}} \label{tab:Pk22D_acoeff}
\scalebox{0.95}[0.95]{
 \color{black}{\begin{tabular}{c cc| cc| cc| cc}
\toprule
  & \multicolumn{2}{c}{\boldmath{$a(x,y)=e^{(x+y)}$}} & \multicolumn{2}{c}{\boldmath{$10(x+y)+1$}} & \multicolumn{2}{c}{\boldmath{$\left|x-1/2\right|+\left|y-1/2\right|+1$}}
 & \multicolumn{2}{c}{\boldmath{$\left \{
\begin{array}{ll}
\textbf{1} & x,y\le \textbf{1/2} \\
\textbf{5000} & {\rm \textbf{otherwise}}
\end{array}\right.$}}\\
        \midrule

\textbf{$\#$}& \textbf{Two} & \textbf{V-}   &  \textbf{Two} & \textbf{V- }   & \textbf{ Two} & \textbf{V-}  &   \textbf{Two} & \textbf{V-}  \\
\textbf{Nodes} & \textbf{Grid} & \textbf{Cycle}   & \textbf{Grid} & \textbf{Cycle}   & \textbf{Grid }& \textbf{Cycle}  & \textbf{Grid} & \textbf{Cycle} \\
        \midrule
$7^2$   & 6 & 6 &  6 & 6 &  6 & 6   & 6 & 6   \\
$15^2$  & 6 & 6 &  6 & 6 &  6 & 6   & 6 & 6   \\
$31^2$  & 6 & 6 &  6 & 6 &  6 & 6   & 6 & 6   \\
$63^2$  & 6 & 6 &  6 & 6 &  6 & 6   & 6 & 6   \\
$127^2$ & 6 & 6 &  6 & 6 &  6 & 6   & 6 & 6   \\
        \bottomrule
\end{tabular}}}
\end{table}

\chapter*{Conclusion and Perspectives}
\addcontentsline{toc}{chapter}{Conclusion and Perspectives}

The purpose of this thesis relied in showing the crucial role played the symbol of the matrix-sequence arising from the discretization of PDEs in providing the necessary information for analysing the spectra of this sequence of matrices and in developing an appropriate fast method to solve the linear problem associated. Now, we will take a quick look at what we have done in this thesis and then we will mention some of open problems.


\begin{itemize}
\item\textbf{Chapter 2}: {We provided the basic tools in order to deal with Toeplitz matrices sequences. In particular we defined the asymptotic distribution of matrix sequence in the sense of eigenvalues and singular values, followed by the notion of zero-distributed sequence.
Furthermore, we introduced the concept of Toeplitz and Circulant structures and we concluded with the asymptotic notion of GLT-sequences.   }
\item\textbf{Chapter 3}: {We considered a class of elliptic partial differential equations with Dirichlet boundary
 conditions, where the operator was $\mathrm{div} \left(-a(\mathbf{x}) \nabla \cdot\right)$ with $a$ continuous
 and positive  on $\overline \Omega$. For the numerical approximation, we employed the classical $\mathbb{P}_k$
 Finite Element Method, in the case of Friedrichs-Keller triangulations, which led to a sequence of matrices of
 increasing size. In other to analyse the spectral properties of the resulting matrices, we determined the
  associated spectral symbol, which is a function describing the spectral distribution (in the Weyl sense)
   when the matrix-size tends to infinity.} We studied in detail the case of constant coefficients and we
   have given a brief account in the more involved case of variable coefficients. The mathematical tools
   stem out from the Toeplitz technology and from the rather new theory of GLT matrix-sequences.
    Numerical results are shown for a practical evidence of the theoretical findings.

 \item\textbf{Chapter 4}: We considered  multigrid strategies for the resolution of linear systems arising from the $\mathbb{Q}_k$
Finite Elements approximation of one- and higher-dimensional elliptic partial differential equations with
Dirichlet boundary conditions and where the operator is $\mathrm{div} \left(-a(\mathbf{x}) \nabla \cdot\right)$, with $a$
continuous and positive  over $\overline \Omega$, $\Omega$ being an open and bounded subset of $\mathbb{R}^d$. While the analysis is in one dimension, the numerics are displayed in a higher dimension $d\ge 2$, showing an optimal behaviour in terms of the dependency on the matrix size and a substantial robustness with respect to the dimensionality $d$ and to the polynomial degree $k$, (see Remark \ref{rmk:numer larger d}).

Now, we list some perspectives that will be considered in the future researches:
\begin{enumerate}
\item It would be valuable to find a unified formula for the symbols of the $\mathbb{P}_k$ over a $d$ dimensional  cube for every $k,d$ presented in Chapter 3 as done for the case of  $\mathbb{Q}_k$ discretizations \cite{Q_k}.
\item In Chapter 4, we noticed that the symbol of our projector $\mathbf{p}_{_{\mathbb{Q}_k}}$ satisfies the conditions set in {Items} {\bf (A)} and {\bf (B)}, while {Condition} {\bf (C)} is violated. We can see from the mathematical derivations in \cite{multi-block} that the latter condition is, in fact, a technical one. We believe that condition {\bf (C)} can be replaced with a less restricted one, presumably in accordance with Remark \ref{rmk:singular_commutator}.
\item More in general, our GLT tools are quite broad and show a remarkable versatility. Thus as a general claim, we think that the same techniques could be used in imaging and inverse problems, since also in this context the spectral analysis and the study of ill-conditioned subspaces are of key importance, (see \cite{serra-imag}).
\end{enumerate}
\end{itemize}

\addcontentsline{toc}{chapter}{Bibliography}
\bibliography{reference}

\bibliographystyle{acm}

\end{document}